\journal{Journal of \LaTeX\ Templates}
\makeatletter \@addtoreset{equation}{section}
\newtheorem{thm}{Theorem}[section]
\newtheorem{cor}[thm]{Corollary}
\newtheorem{prop}[thm]{Proposition}
\theoremstyle{definition}
\newtheorem{rem}[thm]{Remark}
\newtheorem{assum}[thm]{Assumption}
\newtheorem{RHP}[thm]{RH Problem}
\renewcommand{\baselinestretch}{1.25}
\begin{document}

\begin{frontmatter}

\title{Soliton resolution for the Hirota equation with weighted Sobolev initial data \tnoteref{mytitlenote}}
\tnotetext[mytitlenote]{
Corresponding author.\\
\hspace*{3ex}\emph{E-mail addresses}: sftian@cumt.edu.cn,
shoufu2006@126.com (S. F. Tian) }

\author{Jin-Jie Yang, Shou-Fu Tian$^{*}$ and Zhi-Qiang Li}
\address{School of Mathematics, China University of Mining and Technology,  Xuzhou 221116, People's Republic of China}

\begin{abstract}
In this work, the $\overline{\partial}$   steepest descent method is employed to investigate the soliton resolution for the Hirota equation with the initial value belong to weighted Sobolev space $H^{1,1}(\mathbb{R})=\{f\in L^{2}(\mathbb{R}): f',xf\in L^{2}(\mathbb{R})\}$. The long-time asymptotic behavior of the solution $q(x,t)$ is derived in any fixed space-time cone $C(x_{1},x_{2},v_{1},v_{2})=\left\{(x,t)\in \mathbb{R}\times\mathbb{R}: x=x_{0}+vt ~\text{with}~ x_{0}\in[x_{1},x_{2}]\right\}$. We show that  solution resolution  conjecture of the Hirota equation is characterized by the leading order term $\mathcal {O}(t^{-1/2})$ in the continuous spectrum, $\mathcal {N}(\mathcal {I})$ soliton solutions in the discrete spectrum and error order $\mathcal {O}(t^{-3/4})$  from the $\overline{\partial}$ equation.
\end{abstract}

\begin{keyword}
The Hirota equation  \sep $\bar{\partial}$-steepest descent method \sep Matrix Riemann-Hilbert problem \sep Soliton resolution.
\end{keyword}

\end{frontmatter}

\tableofcontents

\section{Introduction}
In 1974, employing inverse scattering transformation  to study the long-time asymptotic solutions of nonlinear equations was first proposed by Manakov \cite{Manakov-1974}. Subsequently, Zakharov and Manakov obtained the long-time asymptotic solution of the nonlinear Schr\"{o}dinger (NLS) equation with  decaying initial value  for the first time \cite{Zakharov-1976}. This method is used to study the long-time asymptotic solutions of nonlinear integrable systems, including the Landau-Lifshitz equation \cite{Bikbaev-1988} and the Maxwell-Bloch equation \cite{Bikbaev-1992}, etc. Until 1993, Deift and Zhou proposed the nonlinear steepest method rigorously obtained the long-time asymptotic behavior of the matrix modified Korteweg-de Vries (mKdV) with  decaying initial value \cite{DZ-AM}, which is called Deift-Zhou (DZ) method. Since then, this method has been gradually applied to other integrable models with many excellent results including focusing NLS \cite{DZ-1994},   short pulse equation \cite{Xu-2018,Xu-2020}, Fokas-Lenells equation \cite{Xu-2015}, Camassa-Holm equation \cite{Monvel-2009}, derivative nonlinear Schr\"{o}dinger (DNLS) equation \cite{tian-PA2018}, and other equations \cite{HG-JDE}-\cite{WX-JGP}.

Recently,  McLaughlin and Miller \cite{McLaughlin-1,McLaughlin-2} proposed a new method to analyze the oscillatory Riemann-Hilbert (RH) problem based on the DZ method and the $\overline{\partial}$ dressing method \cite{dbar-1}-\cite{dbar-5}, namely the $\overline{\partial}$ steepest descent method. This method is different from the DZ method in the following aspects:
(i) For DZ method, it is necessary to extend the jump matrix analytically, while for the $\overline{\partial}$ descent method, it is only necessary to extend continuously.
(ii) The $\overline{\partial}$ method avoids the delicate estimation involving the $L^{p}$ estimation of Cauchy projection operator.
(iii) The $\overline{\partial}$ method improves the error estimation without adding additional conditions to the initial data.
 
For finite mass initial data \cite{Dieng-2008}, this method was successfully applied to the de-focusing NLS equation. In addition, this method was used by Jenkins et.al to study the soliton decomposition conjecture of the focusing NLS equation with the initial value belongs to a weighted Sobolev space \cite{Jenkins-2018}, which decomposes the solution into the sum of a finite number of separated solitons and a radiative parts as $t\rightarrow\infty$. Furthermore, Cuccagna and  Jenkins proved the  asymptotic stability of $N$-soliton solutions of the de-focusing NLS equation by using the $\overline{\partial}$ steepest decent method \cite{Cuccagna-2016}. Of course, in recent years, some excellent results have been obtained by using this method, which not only further improves the estimation accuracy of the DZ method, but also verifies the soliton decomposition conjecture, including DNLS \cite{Jenkins-2018,Jenkins-CPDE}, short pulse equation \cite{Faneg-1},  Fokas-Lenells equation \cite{Faneg-2}, Kundu-Eckhaus equation \cite{Faneg-3}, mKdV equation \cite{JQ-19} etc.

In this work, we apply the $\overline{\partial}$ steepest decent method  to investigate the Hirota equation \cite{Hirota-1973} with  the initial value $q_{0}(x)$ belongs to the weighted Sobolev space $H^{1,1}(\mathbb{R})$
\begin{align}\label{1.1}
&iq_{t}+\alpha\left(q_{xx}+2|q|^{2}q\right)+i\beta\left(q_{xxx}+6|q|^{2}q_{x}\right)=0,\quad
\alpha,\beta\in\mathbb{R},\\
&q(x,0)=q_{0}(x)\in H^{1,1}(\mathbb{R}),
\end{align}
where $\alpha$, $\beta$ denote the second-order and third-order dispersions, and $H^{1,1}(\mathbb{R})$ is the weighted Sobolev space, i.e.,
\begin{align}\label{Sobolev-space}
 H^{1,1}(\mathbb{R})=\left\{f\in L^{2}(\mathbb{R}): f',xf\in L^{2}(\mathbb{R})\right\}.
\end{align}
The Hirota equation can be regarded as modified Schr\"{o}dinger equation with high-order dispersion and  time-delay corrections to the cubic nonlinearity. It is a more accurate approximation than the NLS equation in describing wave propagation in the ocean and optical fiber \cite{Hirota-1973}. Many properties and different kinds of exact solutions of the Hirota equation have been obtained, including rouge wave via Darboux transformation \cite{He-2012}, solutions under non-zero boundary conditions via RH probelm \cite{zhang-2020}, other aspects of research \cite{Lakshmanan-1983,Lamb-1980}. In addition, the conservation laws have been investigated in detail in \cite{Hao-2017}. From Lax pair representation, the spectrum problem is transformed into Riccati equation
\begin{align*}
W_{x}=q_{x}\frac{W}{q}-|q|^{2}+2izW-W^{2},
\end{align*}
by introducing a transformation $W=q\Gamma$ with $\Gamma= \psi_{2} \psi_{1}^{-1}$, where $\psi_{j}$ ($j=1,2$) are the column of $\psi$.  The following formula can be obtained from the compatibility condition
\begin{align}\label{law}
(-iz+W)_{t}=(A+B\Gamma)_{x},
\end{align}
where the $A$ and $B$ denote the elements $V_{11}$ and $V_{12}$ of matrix $V$ defined by \eqref{lax2}, respectively, and $W$ such that $W=\sum_{n=1}^{\infty}\varpi_{n}z^{-n}$. After calculation, it is known that $\varpi_{n}$ satisfies the following recurrence relation
\begin{align*}
-2i\varpi_{j+1}+\sum_{k=1}^{j}\varpi_{k}\varpi_{j-k}-\frac{\varpi_{j}q_{x}}{q}+\varpi_{j,x}=0,
\quad j=2,3\cdots,
\end{align*}
and then the first three conserved quantities can be expressed as
\begin{align*}
I_{1}&=-\frac{1}{2}\int_{-\infty}^{+\infty}iq(x,t)q^{*}(x,t)dx,\\
I_{2}&=-\frac{1}{4}\int_{-\infty}^{+\infty}q(x,t)q_{x}^{*}(x,t)dx,\\
I_{3}&=\frac{1}{8}\int_{-\infty}^{+\infty}iq(x,t)\left[q(x,t)(q^{*}(x,t))^{2}+q_{xx}^{*}(x,t)
\right]dx,\ldots.
\end{align*}

Beside that, Hamiltonian function can also be calculated by Lax representation \cite{Peng-2020}
\begin{align}
\frac{d\psi_{1}}{dx}=\frac{\partial H}{\partial\psi_{2}},\quad
\frac{d\psi_{1}}{dx}=-\frac{\partial H}{\partial\psi_{1}},
\end{align}
with the real valued Hamiltonian function
\begin{align}
H=z\psi_{1}\psi_{2}+\overline{z}\overline{\psi}_{1}\overline{\psi}_{2}+\frac{1}{2}
(\psi_{1}^{2}+\overline{\psi}_{2}^{2})(\psi_{2}^{2}+\overline{\psi}_{1}^{2}),
\end{align}
where $\psi=(\psi_{1}~\psi_{2})$ is a nonzero solution of the Lax pair, and the $\overline{\psi}$ denotes complex conjugation of $\psi$.

It is worth noting that the DZ method was used to study the long-time asymptotic solution of Hirota equation with the error estimate $\epsilon(x,t)=\mathcal {O}(t^{-1}\log t)$ in \cite{Huang-NA}. The initial boundary value problem of \eqref{1.1} was studied in \cite{GUO-2018}. In addition, the Hirota equation can be regarded as a mixture of nonlinear Schr\"{o}dinger equation and complex mKdV equation, because when the parameters are $\alpha=1,~ \beta=0$ and $\alpha=0,~ \beta=1$ respectively, the equation \eqref{1.1} degenerates to nonlinear Schr\"{o}dinger equation and complex mKdV equation, namely
\begin{itemize}
\item Taking $\alpha=1,~ \beta=0$, equation \eqref{1.1} reduces to the NLS equation
\begin{align}\label{1.2}
iq_{t}+  q_{xx}+2|q|^{2}q =0,
\end{align}
\item Taking $\alpha=0,~ \beta=1$, equation \eqref{1.1} reduces to the mKdV equation
\begin{align}\label{1.3}
iq_{t} + q_{xxx}+6|q|^{2}q_{x} =0.
\end{align}
                  \end{itemize}

For NLS equation \eqref{1.2}, the long-time asymptotic solution was investigated by Deift and Zhou via DZ method in  \cite{DZ-1994,DZ-CMP1994}, which show that the error term $\varepsilon(x,t)=\mathcal {O}(t^{-1}\log t)$ as the initial value $q_{0}(x)$ has a high degree of smoothness and decay. In \cite{AIHP}, Borghese and Jenkins et.al studied the initial value in the weighted Sobolev space by using the $\overline{\partial}$ steepest decent method, the error estimate is $\varepsilon(x,t)=\mathcal {O}(t^{-3/4})$.  In addition, the soliton resolution conjecture was proved. On the other hand, Tian employed Fokas method to study the initial boundary value problem of coupled NLS equations \cite{Tian-PAMS,Tian-JDE}. For mKdV equation, similar results were obtained in \cite{JQ-19}. The soliton  resolution conjecture about NLS equation and mKdV equation has been proved. Therefore in this work, we will prove that the soliton resolution conjecture about Hirota equation is also true based on the relationship between the two equations and Hirota equation.

A brief description of the scattering data for the Hirota equation \eqref{1.1} is given as follow: generally, the points where the diagonal element $s_{11}(z)$ of the scattering matrix are zero are called spectral points. When these spectral points satisfy  $s_{11}(z_{n})=0,~s_{11}'(z_{n})\neq0$, which are simple spectral points. It is worth noting that these spectral points are distributed in the whole complex $z$-plane. When $s_{11}(z)=0,~z\in R$ holds, these points are called spectral singularities. If there are spectral singularities, the spectral points may be infinite. When there are no spectral singularities, the spectral points are finite, which consist of the  finite discrete spectrum set. $c_{k}$ is called a norming constant if it is a nonzero constant associated with a simple discrete spectrum. For the initial data $q_{0}(x)$ with only simple zeros and no spectral singularities, the minimum scattering data set of Hirota equation is $D=\left\{\gamma(z),\{(z_{k},c_{k})\}_{k=1}^{N}\right\}$. We will not discuss the case of higher order zeros and spectral singularities, which is described in \cite{DZ-1989}.

In this work, we consider the long-time asymptotic solution of Hirota equation by using the $\overline{\partial}$ steepest decent method for initial value $q_{0}(x)$ with simple zeros and spectral singularities. However, it should be noted that the steady-state phase points $z_{0}$ and $z_{1}$ are asymmetric. We use the similar method in \cite{JQ-19}, that is, adding a jump contour, and we can prove that the new jump contour has no effect on the solution.\\

  \textbf{The main result and remark is as follows.}\\
We will give the main theorem of this work, the soliton resolution conjecture, that is, the solution is decomposed into the sum of a finite number of separated solitons and a radiative parts as $t\rightarrow\infty$.

\begin{thm}\label{thm-4}
Let $q(x,t)$ be the solution of equation \eqref{1.1} whose initial value $q(x,0)=q_{0}(x)$  belongs to the weighted Sobolev space $H^{1,1}(\mathbb{R})$ and satisfies \textbf{Assumption} \ref{ass1}. Denoting $D=\left\{\gamma(z),\{(z_{k},c_{k})\}_{k=1}^{N}\right\}$ is the scattering data corresponding to the initial value without spectral singularities. Fixed $x_{1},x_{2},v_{1},v_{2}\in \mathbb{R}$ with $x_{1}<x_{2}$, $v_{1}<v_{2}$,
\begin{align}
\mathcal {I}&=\left\{z:f(v_{2})<|z|<f(v_{1})\right\},\\
\mathcal {K}(\mathcal {I})&=\{z_{j}\in\mathcal {K}:z_{j}\in\mathcal {I}\},\quad
\mathcal {N}(\mathcal {I})=|\mathcal {K}(\mathcal {I})|,
\end{align}
and a cone  shown in Fig.5.
Let $m_{sol}(x,t|D(\mathcal {I}))$ be the $\mathcal {N}(\mathcal {I})$ soliton solution corresponding to the scattering data $\{z_{j},c_{j}(\mathcal {I})\}_{j=1}^{N}$ defined by \eqref{E1}, it follows from \eqref{3.16}, \eqref{7.16}, \eqref{7.35}, \eqref{9.9} and  \eqref{10.17} that
\begin{align}
m(x,t)=m_{sol}(x,t|D(\mathcal {I}))+t^{-1/2}f(x,t)+\mathcal {O}(t^{-3/4}),
\end{align}
 which implies that the long-time asymptotic behaviour can be expressed by
 \begin{align}
 q(x,t)=m_{sol}(x,t|D(\mathcal {I}))+t^{-1/2}f(x,t)+\mathcal {O}(t^{-3/4}),
\end{align}
where $f(x,t)$ is defined by \eqref{9.13}.
\end{thm}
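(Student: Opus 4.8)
The plan is to obtain the asymptotic formula as the endpoint of a finite chain of explicit transformations of the Riemann--Hilbert (RH) problem for $m(x,t;z)$, extracting at each stage the piece that survives as $t\to\infty$ and showing that everything else is absorbed into the stated error. First I would start from the RH problem normalized at $z=\infty$ whose solution reconstructs the potential through the large-$z$ expansion of its $(1,2)$ entry (the reconstruction formula recorded in \eqref{3.16}), with jump across $\mathbb{R}$ governed by $\gamma(z)$ and with residue conditions at the discrete spectrum $\{z_k\}_{k=1}^{N}$ encoded by the norming constants $c_k$. The controlling oscillation is $e^{2it\theta(z)}$; since both $\alpha$ and $\beta$ enter \eqref{1.1}, the phase $\theta$ is (at most) cubic in $z$ and possesses the two stationary phase points $z_0,z_1$ flagged in the statement, which are generically \emph{asymmetric}. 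I would first tabulate the sign of $\mathrm{Re}(2it\theta)$ to locate the regions where the triangular factorizations are decaying.

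Second, I would perform a scalar conjugation $m\mapsto m^{(1)}$ by a function $\delta(z)$, together with a pole-trading factor, solving a scalar RH problem built from the reflection data on the relevant part of $\mathbb{R}$. This prepares the jump into a form admitting the upper/lower triangular factorization away from $z_0,z_1$, while simultaneously renormalizing the residues of the poles lying in the cone region $\mathcal{I}$ into the soliton data $c_j(\mathcal{I})$. Following the treatment of the asymmetric configuration in \cite{JQ-19}, I would here insert an auxiliary jump contour between the two stationary points and verify, via the decay of the conjugated factors, that it contributes nothing to the final reconstruction; equations \eqref{7.16} and \eqref{7.35} would record the local effect of this step near $z_0$ and $z_1$ respectively.

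Third comes the genuinely $\overline{\partial}$ part. I would open lenses by extending the triangular factors off $\mathbb{R}$ only continuously (not analytically), producing a mixed RH--$\overline{\partial}$ problem $m^{(2)}$, and then factor $m^{(2)} = m^{(3)}\,m^{(2)}_{\mathrm{RHP}}$ into a pure $\overline{\partial}$ component and a pure RH component. The RH component decomposes further as the outer soliton model $m_{sol}(x,t\,|\,D(\mathcal{I}))$ --- the $\mathcal{N}(\mathcal{I})$-soliton defined by \eqref{E1} --- times local parametrices at $z_0,z_1$ built from parabolic-cylinder functions (the model solution \eqref{9.9}). Matching these parametrices on small circles about $z_0,z_1$ yields the explicit $t^{-1/2}$ term with coefficient $f(x,t)$ given in \eqref{9.13}, and leaves a matching error of higher order that is folded into $\mathcal{O}(t^{-3/4})$.

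Finally, I would estimate the residual pure-$\overline{\partial}$ solution $m^{(3)}$: because $q_0\in H^{1,1}(\mathbb{R})$ guarantees only a Lipschitz-type continuous extension of the factorization (no analyticity), bounding the solid Cauchy--Green integral gives $\|m^{(3)}-I\|_{L^{\infty}}=\mathcal{O}(t^{-3/4})$, which is exactly \eqref{10.17}. Composing the four transformations and reading off the $(1,2)$ entry of the large-$z$ expansion then produces the claimed decomposition of $m$, and hence of $q(x,t)$. I expect the principal obstacle to be twofold: (a) handling the two asymmetric stationary phase points $z_0,z_1$ on an equal footing, resolved by the auxiliary-contour device mentioned above; and (b) the $\overline{\partial}$ estimate itself, where the sharp rate $t^{-3/4}$ --- rather than the $t^{-1}\log t$ of the classical Deift--Zhou analysis --- is forced precisely by the limited $H^{1,1}$ regularity of the initial data, and whose careful derivation via Hölder and Hardy--Littlewood--Sobolev inequalities constitutes the technical heart of the argument.
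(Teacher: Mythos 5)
Your proposal follows essentially the same route as the paper: the chain $M\mapsto M^{(1)}\mapsto M^{(2)}\mapsto M^{(2)}_{RHP}\cdot M^{(3)}$ via conjugation by $T(z)$, continuous lens opening, decomposition into the outer $\mathcal{N}(\mathcal{I})$-soliton model plus parabolic-cylinder parametrices at the asymmetric points $z_0,z_1$ (with the auxiliary contour device of \cite{JQ-19}), a small-norm argument for the error $E(z)$ producing the $t^{-1/2}f(x,t)$ term, and the solid Cauchy--Green estimate giving the $\mathcal{O}(t^{-3/4})$ remainder, exactly as in Sections 3--10. The only slight imprecision is that the paper bounds the $z^{-1}$ coefficient $M^{(3)}_1$ by $t^{-3/4}$ (after establishing $\|C_z\|\lesssim t^{-1/4}$ for solvability) rather than the full $L^\infty$ norm of $M^{(3)}-\mathbb{I}$, but this does not change the argument.
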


\begin{rem}
Here we point out that the result of \textbf{Theorem} \ref{thm-4} degenerates to the work of Borghese and Jenkins \cite{AIHP} when the parameters $\alpha=1,~\beta=0$, but it is worth noting that when the parameters $\alpha=0,~\beta=1$, it is different from the work of  Chen and Liu, whose work is real mKdV in \cite{JQ-19}. In addition, dealing with the real mKdV , they gave different solutions according to different regions.
\end{rem}

\noindent \textbf{Organization of the work}

In Section 2, we construct the analytic, asymptotic and symmetry of the eigenfunctions and scattering matrix to establish a suitable RH problem for $M(z)$, from which we establish the relationship between the solution of Hirota equation and the solution of RH problem. In Section 3,  a new matrix valued function $M^{(1)}(z)$ is defined by introducing $T(z)$, which has regular discrete spectrum and two triangular decompositions at steady-state phase points. In Section 4, by introducing $R^{(2)}(z)$, we get a mixed $\overline{\partial}$-RH problem about $M^{(2)}(z)$, whose jump path changes from the real axis $\mathbb{R}$ to the the descent line. In Section 5, we decompose the mixed RH problem $M^{(2)}(z)$ into a pure RH problem and a pure $\overline{\partial}$ problem corresponding to $M_{RHP}^{(2)}(z)$ and $M^{(3)}(z)$, respectively. In section 6, the $M_{RHP}^{(2)}(z)$ can be solved by an out model $M^{(out)}(z)$ for the soliton components under the reflection-less condition. In section 7, the internal models $M^{(z_{0})}(z)$ and $M^{(z_{1})}(z)$ near the steady-state phase points $z_{0}$ and $z_{1}$ can be approximated by the known solvable parabolic cylinder model. In Section 8, the error function $E(z)$ is considered by using the small norm RH problem. In Section 9, with the help of known results, the pure $\overline{\partial}$ problem $M^{(3)}(z)$ is considered.
Finally, the soliton resolution and long-time asymptotic solution of Hirota equation are obtained via the inverse transformation
\begin{align*}
M(z)\leftrightarrows M^{(1)}(z)\leftrightarrows M^{(2)}(z)\leftrightarrows M^{(3)}(z) \leftrightarrows E(z).
\end{align*}

\section{The spectral analysis}
In this section, we mainly establish the eigenfunctions and scattering matirx related to the initial value $q(x,0)\in H^{1,1}$, and further obtain the corresponding analysis, asymmetry and symmetry. Note that the Hirota equation admits the Lax pair
\begin{align}
&\psi_{x}(x,t;z)+iz[\sigma_{3},\psi(x,t;z)]=Q\psi(x,t;z),\label{lax1}\\
&\psi_{t}(x,t;z)+\left(4i\beta z^{3}+2i\alpha z^{2}\right)[\sigma_{3},\psi(x,t;z)]=V\psi(x,t;z),\label{lax2}
\end{align}
 where
\begin{align*}
&Q=-i[\sigma_{3},\langle\psi R\rangle]=\left(
                                                \begin{array}{cc}
                                                  0 & q \\
                                                  -q^{*} & 0 \\
                                                \end{array}
                                              \right)
, \\  V=\alpha\left[2zQ+i\sigma_{3}(Q_{x}-Q^{2})\right]&+\beta\left[4z^{2}Q+
2iz\sigma_{3}(Q_{x}-Q^{2})+[Q_{x},Q]-Q_{xx}+2Q^{3}\right].
\end{align*}
Considering the asymptotic spectrum problem as $x\rightarrow\infty$, we introduce the two eigenfunctions
\begin{align}\label{3.1}
\phi_{\pm}=\psi_{\pm}e^{i\left[zx+(4\beta z^{3}+2\alpha z^{2})t\right]\sigma_{3}}\rightarrow\mathbb{I}, ~\text{as}~ x\rightarrow\infty.
\end{align}
The linear spectrum problem  can be written as
\begin{align}\label{3.2a}
\begin{split}
\phi_{x}+iz[\sigma_{3},\phi]=Q\phi,\\
\phi_{t}+(4i\beta z^{3}+2i\alpha z^{2})[\sigma_{3},\phi]=V\phi,
\end{split}
\end{align}
which can be written as a fully differential form
\begin{align}\label{3.3}
d\left(e^{i\left[zx+(4\beta z^{3}+2\alpha z^{2})t\right]\widehat{\sigma}_{3}}\phi\right)=
e^{i\left[zx+(4\beta z^{3}+2\alpha z^{2})t\right]\widehat{\sigma}_{3}}\left(Qdx+Vdt\right)\phi.
\end{align}
From Eq.\eqref{3.3}, we know that the integral has nothing to do with the path, so we choose two special paths $(x_{1},t_{1})=(-\infty,t)$ and $(x_{2},t_{2})=(+\infty,t)$ shown in Fig. 1. Then $\phi_{\pm}$ are determined by the following integral equations
\begin{align}\label{3.4}
\begin{split}
&\phi_{-}(x;z)=\mathbb{I}+\int_{-\infty}^{x}e^{-iz(x-y)\widehat{\sigma}_{3}}Q(y;z)\phi_{-}(y;z)dy,\\
&\phi_{+}(x;z)=\mathbb{I}-\int_{x}^{+\infty}e^{-iz(x-y)\widehat{\sigma}_{3}}Q(y;z)\phi_{+}(y;z)dy.
\end{split}
\end{align}
\centerline{\begin{tikzpicture}[scale=0.4]
\draw[-][thick](-8,7)--(-2,7)[->][thick];
\draw[-][thick](8,7)--(2,7)[->][thick];
\draw[-][thick](-9,5)--(-8,5);
\draw[-][thick](-8,5)--(-7,5);
\draw[-][thick](-7,5)--(-6,5);
\draw[-][thick](-6,5)--(-5,5);
\draw[-][thick](-5,5)--(-4,5);
\draw[-][thick](-4,5)--(-3,5);
\draw[-][thick](-3,5)--(-2,5);
\draw[-][thick](-2,5)--(-1,5)[->][thick]node[below]{$x$};
\draw[-][thick](-5,1)--(-5,2);
\draw[-][thick](-5,2)--(-5,3);
\draw[-][thick](-5,3)--(-5,4);
\draw[-][thick](-5,4)--(-5,5);
\draw[-][thick](-5,5)--(-5,6);
\draw[-][thick](-5,6)--(-5,7);
\draw[-][thick](-5,7)--(-5,8);
\draw[-][thick](-5,8)--(-5,9)[->] [thick]node[right]{$t$};
\draw[-][thick](1,5)--(2,5);
\draw[-][thick](2,5)--(3,5);
\draw[-][thick](3,5)--(4,5);
\draw[-][thick](4,5)--(5,5);
\draw[-][thick](5,5)--(6,5);
\draw[-][thick](6,5)--(7,5);
\draw[-][thick](7,5)--(8,5);
\draw[-][thick](8,5)--(9,5)[->][thick]node[below]{$x$};
\draw[-][thick](5,1)--(5,2);
\draw[-][thick](5,2)--(5,3);
\draw[-][thick](5,3)--(5,4);
\draw[-][thick](5,4)--(5,5);
\draw[-][thick](5,5)--(5,6);
\draw[-][thick](5,6)--(5,7);
\draw[-][thick](5,7)--(5,8);
\draw[-][thick](5,8)--(5,9)[->] [thick]node[right]{$t$};
\end{tikzpicture}}
\centerline{\noindent {\small \textbf{Figure 1.} The contour of integral \eqref{3.4}.}}

From the integral equations \eqref{3.4}, the following results can be proved similar to those in \cite{BC-1984,BC-1988}.
\begin{prop}\label{prop2}
Assume that the initial value $q(x)-q_{0}\in H^{1,1}$, the eigenfunctions $\phi_{\pm}$ admit\\
$\bullet$ $\phi_{-,1}$ and $\phi_{+,2}$ are analytic in $\mathbb{C}^{+}=\left\{z|Imz>0\right\}$, and continuous to the real axis $\mathbb{R}$,\\
$\bullet$ $\phi_{-,2}$ and $\phi_{+,1}$ are analytic in $\mathbb{C}^{-}=\left\{z|Imz<0\right\}$, and continuous to the real axis $\mathbb{R}$,
where $\phi_{\pm,j}$ $(j=1,2)$ denote the $j-th$ column of $\phi_{\pm}$.
\end{prop}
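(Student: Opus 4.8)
The plan is to read off analyticity and boundary continuity directly from the Volterra integral equations \eqref{3.4}, by constructing the Neumann (Picard) series for each column separately and tracking the sign of the exponential factor it produces. First I would pass to column form. Since $\widehat\sigma_3$ acts by conjugation, $e^{-iz(x-y)\widehat\sigma_3}M=e^{-iz(x-y)\sigma_3}Me^{iz(x-y)\sigma_3}$, so the first column of $e^{-iz(x-y)\widehat\sigma_3}(Q\phi_-)$ equals $\mathrm{diag}\big(1,e^{2iz(x-y)}\big)Q\phi_{-,1}$; hence $\phi_{-,1}$ solves a Volterra equation closed in itself whose only nontrivial exponential factor is $e^{2iz(x-y)}$. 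Because the integration runs over $y\in(-\infty,x)$ one has $x-y\ge0$ and $|e^{2iz(x-y)}|=e^{-2(x-y)\,\mathrm{Im}\,z}\le1$ exactly when $\mathrm{Im}\,z\ge0$, which will give analyticity in $\mathbb{C}^{+}$. An identical reduction applies to the remaining columns: the interplay between the exponential sign and the direction of integration makes the kernel bounded for $\mathrm{Im}\,z\ge0$ in the case of $\phi_{+,2}$ and for $\mathrm{Im}\,z\le0$ in the cases of $\phi_{-,2}$ and $\phi_{+,1}$, matching the claimed half-planes.

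The one analytic input I would isolate is that $q_0\in H^{1,1}(\mathbb{R})$ forces $q_0\in L^{1}(\mathbb{R})$: writing $q_0=(1+|x|)^{-1}\cdot(1+|x|)q_0$ and using Cauchy--Schwarz gives $\|q_0\|_{L^1}\le\|(1+|x|)^{-1}\|_{L^2}\big(\|q_0\|_{L^2}+\|xq_0\|_{L^2}\big)<\infty$. With the exponential factor bounded by $1$ on the relevant closed half-plane, I would estimate the $n$-th iterate of, say, $\phi_{-,1}$ by the standard Volterra bound $\tfrac{1}{n!}\big(\int_{-\infty}^{x}|q_0(y)|\,dy\big)^{n}$, so the series converges absolutely and uniformly for $x\in\mathbb{R}$ and $z$ in the closed half-plane. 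Each iterate is an iterated integral of an integrand that is entire in $z$ with a uniformly bounded exponential, hence analytic in the open half-plane by differentiation under the integral; the locally uniform limit of analytic functions is analytic by Morera's theorem, yielding the stated analyticity of each column.

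For continuity up to $\mathbb{R}$ I would note that when $z$ is real the exponential has modulus exactly $1$, so the same $L^1$ bound dominates the integrand uniformly in $z$; continuity of each iterate in $(x,z)$ together with the uniform convergence then gives continuity of the limit on $\overline{\mathbb{C}^{+}}$, respectively $\overline{\mathbb{C}^{-}}$, by dominated convergence.

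The step I expect to be the main obstacle is exactly this uniform control up to the real axis. Away from $\mathbb{R}$ the exponential decays and convergence is immediate, but on the boundary it only has modulus one, so the whole argument must rest on the integrability $q_0\in L^1$ rather than on any decay of the kernel. Making the Neumann estimate and the dominated-convergence hypotheses uniform down to $\mathrm{Im}\,z=0$ with only an $L^1$ potential is where the weighted Sobolev regularity is genuinely used and where the care is required.
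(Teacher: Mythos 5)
Your proposal is correct and follows essentially the same route as the paper: the paper gives no proof of this proposition at all, stating only that it ``can be proved similar to those in \cite{BC-1984,BC-1988}'', and the argument in those references is exactly the column-wise Volterra/Neumann-series scheme you describe, with the sign of the factor $e^{\pm 2iz(x-y)}$ against the direction of integration selecting the half-plane, the embedding $H^{1,1}(\mathbb{R})\subset L^{1}(\mathbb{R})$ via Cauchy--Schwarz supplying the integrable kernel, the $\tfrac{1}{n!}\bigl(\int|q_{0}|\bigr)^{n}$ bound giving uniform convergence up to the closed half-plane, and Morera plus dominated convergence yielding analyticity and boundary continuity. Your closing remark is accurate in spirit but slightly overstated: only $q_{0}\in L^{1}$ is needed for this proposition, so it is the weight $xq_{0}\in L^{2}$ (not the derivative $q_{0}'\in L^{2}$) that is used here, and the full strength of $H^{1,1}$ enters only later in the scattering theory.
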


For the first-order linear spectral problem, since both $\phi_{-}$ and $\phi_{+}$ are solutions, there exists a matrix $S(z)=(s_{ij})_{2\times2}$ which is independent of time and space such that
\begin{align}\label{3.5}
\phi_{-}(x;z)=\phi_{+}(x;z)S(z),\quad x,t,z\in R.
\end{align}
Note that tr$Q=0$, it follows from the Liouville's formula \cite{Liu} that $(\det \phi)_{x}=(\det \phi)_{t}=0$, furthermore, $\det \psi_{\pm}=\det \phi_{\pm}=1$ can be derived by Eq.\eqref{3.1}. The elements of the scattering matrix $S(z)$ can be expressed as eigenfunctions by direct calculation as follows
\begin{align}\label{3.6}
s_{11}(z)=Wr\left(\phi_{-,1},\phi_{+,2}\right),\quad
s_{22}(z)=Wr\left(\phi_{+,1},\phi_{-,2}\right),
\end{align}
where $Wr$ represents the Wronskians determinant. Combining with the Proposition \eqref{prop2} and Eq.\eqref{3.6},  we have that the scattering data $s_{11}$ is analytic in $\mathbb{C}^{+}$, and $s_{22}$ is analytic in $\mathbb{C}^{-}$.

Next, we consider the symmetry of the eigenfunctions and the scattering matrix, which is related to the distribution of the zeros of the scattering data. It plays a decisive role in the analysis of the corresponding RH problem and the establishment of the residue condition. Assuming that $\phi(x,t;z)$ is the solution of the first formula for Eq.\eqref{3.2a}, so is $\sigma_{2}\phi^{*}(x,t;z^{*})\sigma_{2}$, where $\sigma_{2}=\left(
                                                               \begin{array}{cc}
                                                                 0 & -i \\
                                                                 i & 0 \\
                                                               \end{array}
                                                             \right).
$ It follows from asymptotic behavior \eqref{3.1} that
\begin{align}\label{3.7}
\phi(x,t;z)=\sigma_{2}\phi^{*}(x,t;z)\sigma_{2},\quad z\in \mathbb{R},
\end{align}
from which together with \eqref{3.5}, we obtain the  symmetry of the scattering matrix
\begin{align}\label{3.8}
S(z)=\sigma_{2}S^{*}(z^{*})\sigma_{2},
\end{align}
which leads to
\begin{align}\label{3.9}
s_{11}(z)=s_{22}^{*}(z^{*}),\quad s_{12}(z)=-s_{21}^{*}(z^{*}).
\end{align}
According to the symmetry of scattering data, we further introduce the reflection coefficient and its symmetry
\begin{align}\label{3.10}
\gamma(z)=\frac{s_{21}(z)}{s_{11}(z)},\quad \frac{s_{12}(z)}{s_{22}(z)}=-\gamma^{*}(z),~z\in \mathbb{R}.
\end{align}

In order to ensure that the discrete spectrum is finite, and to avoid the need to deal with many pathologies possible when discussing general initial value problems, we will make the following assumptions
\begin{assum} \label{ass1} The initial value $q(x,0)$ for the Hirota equation satisfies the following assumptions:\\
($I$) The scattering data $s_{11}(z)\neq0$ for all $z\in \mathbb{R}$.\\
($II$) The scattering data $s_{11}(z)$ only is of finite number of simple zeros, namely, assume that the set $\mathcal{Z}=\left\{z_{j}\right\}_{j=1}^{N}$ is composed of $N$ zeros of scattering data $s_{11}(z)$. From the symmetry Eq.\eqref{3.9}, the set $\mathcal{\overline{Z}}=\left\{z_{j}^{*}\right\}_{j=1}^{N}$ is composed of $N$ zeros of scattering data $s_{22}(z)$.
\end{assum}

To get a suitable RH problem, we introduce piecewise meromorphic function $M(z)=M(x,t;z)$
\begin{align}\label{3.11}
M(x,t;z)=\left\{\begin{aligned}
 \left(\frac{\phi_{-,1}(x,t;z)}{s_{11}(z)},\phi_{+,2}(x,t;z)\right), \quad z\in\mathbb{C}^{+},\\
 \left(\phi_{+,1}(x,t;z),\frac{\phi_{-,2}(x,t;z)}{s_{22}(z)}\right), \quad z\in \mathbb{C}^{-},
\end{aligned}\right.,
\end{align}
which satisfies the following RH problem.
\begin{RHP}\label{RHP1}
Find an analytic function $M(z)=M(x,t;z)$ that satisfies the following properties:\\
($I$) $M(z)$ is analytic in $C\setminus(R\cup\mathcal {Z}\cup\mathcal {\overline{Z}})$.\\
($II$) The jump condition:
\begin{align}\label{3.12}
M^{+}(z)=M^{-}(z)V(z),\quad z\in \mathbb{R}\triangleq \Sigma^{(1)},
\end{align}
where $M^{\pm}(x,t;z)=\lim\limits_{\varepsilon\rightarrow0^{+}}M(x,t;z\pm i\varepsilon),~\varepsilon\in \mathbb{R}$, and
\begin{align}\label{3.13}
V(z)=\left(
       \begin{array}{cc}
         1+|r(z)|^{2} & -r^{*}(z)e^{-2it\theta(z)} \\
         r(z)e^{2it\theta(z)} & 1 \\
       \end{array}
     \right),
\end{align}
where $\theta(z)=z\frac{x}{t}+2\alpha z^{2}+4\beta z^{3}$.\\
($III$) Asymptotic behavior: $M(z)=\mathbb{I}+O(z^{-1})$ as $z\rightarrow\infty$.\\
($IV$) For any $z_{j}\in\mathcal {Z}$ and $z_{j}\in\mathcal {\overline{Z}}$, the residue conditions for $M(z)$ are determined by
\begin{align}\label{3.14}
\mathop{Res}_{z=z_{j}}M=\lim_{z\rightarrow z_{j}}M\left(\begin{array}{cc}
                   0 & 0 \\
                   c_{j}e^{2it\theta(z)} & 0
                 \end{array}\right),~~
\mathop{Res}_{z=z^{*}_{j}}M=\lim_{z\rightarrow z^{*}_{j}}M\left(\begin{array}{cc}
                   0 & -c^{*}_{j}e^{-2it\theta(z)} \\
                   0 & 0
                 \end{array}\right).
\end{align}
\end{RHP}
\begin{rem}
The Zhou's vanishing lemma \cite{DZ-1989} can guarantee the existence of the solution of the \textbf{RH Problem} \ref{RHP1} by transforming the poles into jumps on a sufficiently small circular contours. The uniqueness of the solution is a natural result of Liouville's theorem when the solution exists.
\end{rem}

Expanding $M(z)$ as follows
\begin{align}\label{3.15}
M(z)=\mathbb{I}+\frac{M_{1}(z)}{z}+\mathcal {O}(z^{-1}),\quad z\rightarrow\infty,
\end{align}
and substituting it into Eq.\eqref{3.2a}, we have the following reconstruction formula of $q(x,t)$
\begin{align}\label{3.16}
q(x,t)=2i\lim_{z\rightarrow\infty}(zM(z))_{12},
\end{align}
by comparing the coefficients of $z^{i}$ ($i=0,1,2,3$), where $A_{ij}$ denote the $i-th$ row  and  $j-th$ column of the matrix $A$.

\section{Conjugation}
In order to study the long-time asymptotic behavior of solutions, we first deal with the oscillatory term in the jump matrix defined in Eq.\eqref{3.13}, that is, we decompose the jump matrix according to the sign change graph of $Re(it\theta)$ a to ensure that any jump matrix is bounded in a given region. In this work, we only consider the case $\beta>0$ for our analysis convenient.

As shown in \cite{DZ-AM}, the stationary points of the function $\theta(z)$ should be considered
\begin{align}\label{4.1}
\theta(z)=z\frac{x}{t}+2\alpha z^{2}+4\beta z^{3}.
\end{align}
Letting
\begin{align*}
\frac{d(\theta(z))}{dz}=0,
\end{align*}
which leads to
\begin{align}\label{4.2}
z_{0}=\frac{-\alpha-\sqrt{\alpha^{2}-3\beta x/t}}{6\beta},\quad
z_{1}=\frac{-\alpha+\sqrt{\alpha^{2}-3\beta x/t}}{6\beta}.
\end{align}
The sign change diagram of the function $Re(it\theta(z))$ is shown in Fig. 2.

\centerline{\begin{tikzpicture}[scale=0.9]
\draw[-][thick](-4,0)--(-3,0);
\draw[-][thick](-3,0)--(-2,0);
\draw[-][thick](-2.0,0)--(-1.0,0)node[below left]{$z_{0}$};
\draw[-][thick](-1,0)--(0,0);
\draw[-][thick](0,0)--(1,0)node[below right]{$z_{1}$};
\draw[-][thick](1,0)--(2,0);
\draw[fill] (1,0) circle [radius=0.035];
\draw[fill] (-1,0) circle [radius=0.035];
\draw[-][thick](2.0,0)--(3.0,0);
\draw[-][thick](3.0,0)--(4.0,0);
\draw [-,thick, cyan] (1,0) to [out=90,in=-120] (1.9,3);
\draw [-,thick, cyan] (1,0) to [out=-90,in=120] (1.9,-3);
\draw [-,thick, cyan] (-1,0) to [out=90,in=-60] (-1.9,3);
\draw [-,thick, cyan] (-1,0) to [out=-90,in=60] (-1.9,-3);
\draw[fill] (0,2) node{$Re(it\theta(z))>0$};
\draw[fill] (0,-2) node{$Re(it\theta(z))<0$};
\draw[fill] (-2.5,1) node{$Re(it\theta(z))<0$};
\draw[fill] (2.5,1) node{$Re(it\theta(z))<0$};
\draw[fill] (2.5,-1) node{$Re(it\theta(z))>0$};
\draw[fill] (-2.5,-1) node{$Re(it\theta(z))>0$};
\end{tikzpicture}}
\centerline{\noindent {\small \textbf{Figure 2.} (Color online) The signature table for $Re( it\theta(z))$ in the complex $z$-plane.}}

For $z\in(z_{0},z_{1})$, the upper and lower triangular decomposition of jump matrix $V(z)$ will produce a diagonal matrix, therefore similar to \cite{Huang-NA}  we first introduce a scalar RH problem about $\delta(z)$ to deal with this case
\begin{align}\label{4.3}
\left\{\begin{aligned}
&\delta_{+}(z)=\delta_{-}(z)(1+|\gamma(z)|^{2}),\quad z\in(z_{0},z_{1}),\\
&\delta_{+}(z)=\delta_{-}(z)=\delta(z),\qquad\quad \mathbb{R}\setminus(z_{0},z_{1}),\\
&\delta(z)\rightarrow \mathbb{I},\quad z\rightarrow\infty.
\end{aligned}\right.
\end{align}
Then the solution of Eq.\eqref{4.3} can be written as
\begin{align}\label{4.4}
\delta(z)=exp\left[\frac{1}{2\pi i}\int_{z_{0}}^{z_{1}}\frac{\log(1+|\gamma|^{2})}
{s-z}ds\right]\triangleq  exp\left(i\int_{z_{0}}^{z_{1}}\frac{\nu(s)}{s-z}ds\right),
\end{align}
where $\nu(z)=-\frac{1}{2\pi}\log(1+|\gamma(z)|^{2})$.

For the convenience of the following analysis, we introduce the following notation
\begin{align}
&\triangle_{z_{1}}^{-}=\left\{k\in\{1,2,\cdots,N\},~z_{0}<z_{k}<z_{1}\right\},\label{4.5}\\
&\triangle_{z_{1}}^{+}=\left\{k\in\{1,2,\cdots,N\},~z_{0}<z_{k}, z_{k}>z_{1}\right\},\label{4.6}\\
&T(z)=T(z,z_{1})=\prod_{z\in\triangle_{z_{1}}^{-}}\frac{z-z_{k}^{*}}{z-z_{k}}\cdot \delta(z),\label{4.7}\\
&T_{0}(z_{1})=T(z_{1},z_{1})=\prod_{z\in\triangle_{z_{1}}^{-}}\frac{z_{1}-z_{k}^{*}}
{z_{1}-z_{k}}\cdot e^{i\beta^{+}(z_{1},z_{1})},\label{4.8}\\
&T_{0}(z_{0})=T(z_{0},z_{1})=\prod_{z\in\triangle_{z_{1}}^{-}}\frac{z_{0}-z_{k}^{*}}
{z_{0}-z_{k}}\cdot e^{i\beta^{-}(z_{0},z_{1})},\label{4.9}\\
&\beta^{+}(z,z_{1})=-\nu(z_{1})\ln(z-z_{1}+1)+\int_{z_{0}}^{z_{1}}\frac{\nu(s)-\chi_{1}(s)
\nu(z_{1})}{s-z}ds,\label{4.10}\\
&\beta^{-}(z,z_{0})=-\nu(z_{0})\ln(z-z_{0}+1)+\int_{z_{0}}^{z_{1}}\frac{\nu(s)-\chi_{0}(s)
\nu(z_{0})}{s-z}ds,\label{4.11}\\
&\chi_{1}(s)=\left\{\begin{aligned}
1, \quad z_{1}<s<z_{1}+1,\\
0,\quad\quad\quad \text{elsewhere},
\end{aligned}
\right.,\quad
\chi_{0}(s)=\left\{\begin{aligned}
1, \quad z_{0}<s<z_{0}+1,\\
0, \quad\quad\quad\text{elsewhere}.
\end{aligned}
\right..\label{4.12}
\end{align}
\begin{prop}\label{prop3}
The function $T(z)$ defined by Eq.\eqref{4.7} such that \\
$(I)$ $T(z)$ is meromorphic function in $\mathbb{C}\setminus(z_{0},z_{1})$, and has a simple pole at $z_{n}$ and a simple zero at $z_{n}^{*}$ for $n\in\bigtriangleup_{z_{1}}^{-}$.\\
$(II)$ For $z\in \mathbb{C}\setminus(z_{0},z_{1})$, $T^{*}(z^{*})T(z)=1$.\\
$(III)$ For $z\in \mathbb{C}\setminus(z_{0},z_{1})$, $T(z)$ is of the jump condition
\begin{align*}
T_{+}(z)=T_{-}(z)(1+|\gamma(z)|^{2}), ~z\in(z_{0},z_{1}),
\end{align*}
where $T_{\pm}(z)=\lim\limits_{\varepsilon\rightarrow0^{+}}T(z\pm i\varepsilon)$.\\
$(IV)$  As $|z|\rightarrow \infty $ with $|arg(z)|\leq c<\pi$,
\begin{align}\label{4.13}
T(z)=1+\frac{i}{z}\left[2\sum_{k\in\Delta_{z_{0}}^{-}}Im(z_{k})-
\int_{z_{0}}^{z_{1}}\nu(s)ds\right]+O(z^{-2}).
\end{align}\\
$(V)$ As $z\rightarrow z_{0}$ along any ray $z_{0}+e^{i\phi}R_{+}$  and $z\rightarrow z_{1}$  along any ray $z_{1}+e^{i\phi}R_{+}$  with $|\phi|\leq c<\pi$
\begin{align}
|T(z-z_{1})-T_{0}(z_{1})(z-z_{1})^{i\nu(z_{1})}|\leq C\parallel r\parallel_{H^{1}(R)}|z-z_{1}|^{\frac{1}{2}},\label{4.14}\\
|T(z-z_{0})-T_{0}(z_{0})(z-z_{0})^{i\nu(z_{0})}|\leq C\parallel r\parallel_{H^{1}(R)}|z-z_{0}|^{\frac{1}{2}},\label{4.15}
\end{align}
\end{prop}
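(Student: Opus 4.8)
The plan is to factor $T(z)=B(z)\delta(z)$ into the finite Blaschke-type product $B(z)=\prod_{k\in\triangle_{z_1}^{-}}\frac{z-z_k^{*}}{z-z_k}$ and the scalar function $\delta(z)$ of \eqref{4.4}, and then read off each property from the appropriate factor. For $(I)$, observe that $\delta(z)$ is the exponential of a Cauchy integral supported on $(z_0,z_1)\subset\mathbb{R}$, hence analytic and non-vanishing on $\mathbb{C}\setminus(z_0,z_1)$. Since each $z_k$ with $k\in\triangle_{z_1}^{-}$ is a zero of $s_{11}$ and Assumption \ref{ass1}$(I)$ rules out real zeros, we have $z_k\in\mathbb{C}^{+}$ and $z_k^{*}\in\mathbb{C}^{-}$, both off the cut; thus $B(z)$ contributes exactly the simple poles at $z_k$ and simple zeros at $z_k^{*}$, which $T(z)$ inherits. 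For $(II)$ I would verify the Schwarz symmetry factorwise: a direct conjugation gives $\overline{B(z^{*})}\,B(z)=1$, while the reality of $\log(1+|\gamma|^{2})$ on $(z_0,z_1)$ yields $\overline{\log\delta(z^{*})}=-\log\delta(z)$, so that $\delta^{*}(z^{*})\delta(z)=1$; multiplying the two gives $(II)$. For $(III)$, $B(z)$ is analytic across $(z_0,z_1)$ so $B_{+}=B_{-}$ there, and the Sokhotski--Plemelj formula applied to $\log\delta$ gives $\log\delta_{+}-\log\delta_{-}=\log(1+|\gamma|^{2})$; hence $T_{+}=T_{-}(1+|\gamma|^{2})$.

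For $(IV)$ I would expand both factors geometrically for large $z$. The product gives $B(z)=1+\frac{1}{z}\sum_{k\in\triangle_{z_1}^{-}}(z_k-z_k^{*})+O(z^{-2})=1+\frac{2i}{z}\sum_{k}\mathrm{Im}(z_k)+O(z^{-2})$, while $\log\delta(z)=-\frac{i}{z}\int_{z_0}^{z_1}\nu(s)\,ds+O(z^{-2})$ after substituting $\log(1+|\gamma|^{2})=-2\pi\nu$ and expanding $(s-z)^{-1}$. Multiplying the two expansions and collecting the $z^{-1}$ coefficient reproduces \eqref{4.13}.

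The crux is $(V)$, the endpoint analysis. Writing $\log\delta(z)=i\int_{z_0}^{z_1}\frac{\nu(s)}{s-z}\,ds$, I would isolate the singular contribution at $z_1$ by the standard subtraction $\int_{z_0}^{z_1}\frac{\nu(s)}{s-z}ds=\nu(z_1)\int_{z_0}^{z_1}\frac{ds}{s-z}+\int_{z_0}^{z_1}\frac{\nu(s)-\nu(z_1)}{s-z}ds$. The first term produces the branch factor $(z-z_1)^{i\nu(z_1)}$ — with the regularizing $\ln(z-z_1+1)$ and the cutoff $\chi_1$ built into $\beta^{+}$ of \eqref{4.10} absorbing the finite endpoint contributions — and, combined with $B(z_1)$, yields the prefactor $T_0(z_1)$ of \eqref{4.8}. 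The remainder is then controlled by the Hölder continuity of $\nu$: since $r\in H^{1}(\mathbb{R})\hookrightarrow C^{1/2}(\mathbb{R})$, also $\nu=-\frac{1}{2\pi}\log(1+|r|^{2})\in C^{1/2}$ with $|\nu(s)-\nu(z_1)|\le C\|r\|_{H^{1}(\mathbb{R})}|s-z_1|^{1/2}$, and a direct estimate of the remaining Cauchy integral gives $|\beta^{+}(z,z_1)-\beta^{+}(z_1,z_1)|\le C\|r\|_{H^{1}(\mathbb{R})}|z-z_1|^{1/2}$. Exponentiating and using $|e^{w}-1|\lesssim|w|$ for bounded $w$ yields \eqref{4.14}, and the identical argument at $z_0$ (with $\chi_0,\beta^{-}$) gives \eqref{4.15}.

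The main obstacle I anticipate is making this last bound uniform as $z\to z_1$ along an arbitrary ray $z_1+e^{i\phi}\mathbb{R}_{+}$ with $|\phi|\le c<\pi$: one must estimate the Cauchy transform of the Hölder remainder uniformly in the approach direction (the restriction $|\phi|\le c<\pi$ keeps $z$ away from the cut and guarantees a single-valued branch), and confirm that the branch of $(z-z_1)^{i\nu(z_1)}$ is fixed consistently so that the singular factors cancel exactly, leaving only the $\mathcal{O}(|z-z_1|^{1/2})$ error.
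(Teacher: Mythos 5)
The paper states Proposition \ref{prop3} without giving any proof at all --- it is quoted as a known result of the type established in \cite{AIHP} and \cite{JQ-19} for the NLS and mKdV cases --- so there is nothing in the text to compare your argument against line by line. Your factorization $T=B\delta$ with $B$ the finite Blaschke product and $\delta$ the exponentiated Cauchy integral is exactly the standard route those references take, and parts $(I)$--$(IV)$ as you present them are complete and correct: the symmetry check $\overline{B(z^{*})}B(z)=1$, $\overline{\delta(z^{*})}\delta(z)=1$, the Plemelj computation for $(III)$, and the matching of the $z^{-1}$ coefficients in $(IV)$ all go through as written (the index $\Delta_{z_{0}}^{-}$ in \eqref{4.13} is evidently a typo for $\Delta_{z_{1}}^{-}$, consistent with your computation).

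For $(V)$ you have correctly identified both the mechanism (subtract $\nu(z_{1})$ to peel off the branch factor $(z-z_{1})^{i\nu(z_{1})}$, absorb $B(z_{1})$ and the regularized endpoint terms into $T_{0}(z_{1})$, control the remainder via $H^{1}\hookrightarrow C^{1/2}$) and the one genuinely delicate step, namely the uniform bound
\begin{align*}
\Bigl|\int_{z_{0}}^{z_{1}}\frac{\nu(s)-\nu(z_{1})}{s-z}\,ds-\int_{z_{0}}^{z_{1}}\frac{\nu(s)-\nu(z_{1})}{s-z_{1}}\,ds\Bigr|\lesssim \|r\|_{H^{1}(\mathbb{R})}\,|z-z_{1}|^{1/2}
\end{align*}
along rays $z_{1}+e^{i\phi}\mathbb{R}_{+}$ with $|\phi|\leq c<\pi$. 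To close this you should split the integration range at $|s-z_{1}|\leq 2|z-z_{1}|$: on the near piece each integral is bounded separately by the H\"older modulus (using $|s-z|\gtrsim_{c}|s-z_{1}|\sin$ of the angle gap), and on the far piece the difference of kernels is $\frac{z-z_{1}}{(s-z)(s-z_{1})}$, which together with $|\nu(s)-\nu(z_{1})|\lesssim|s-z_{1}|^{1/2}$ integrates to $O(|z-z_{1}|^{1/2})$. With that one computation supplied, your proposal is a complete and correct proof; also note that the H\"older constant of $\nu$ is really controlled by $\|r\|_{H^{1}}^{2}$ rather than $\|r\|_{H^{1}}$ (since $\log(1+|r|^{2})$ is quadratic near $r=0$), a cosmetic discrepancy with the stated constant that the literature routinely absorbs into $C$.
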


In order to redistribute the poles and eliminate the diagonal matrix generated by the matrix triangular decomposition of the jumping matrix, we employ $T(z)$ to define a new matrix function $M^{(1)}(z)$ such that
\begin{align}\label{4.16}
M^{(1)}(z)=M(z)T(z)^{-\sigma_{3}},
\end{align}
which admits the RH problem
\begin{RHP}\label{RHP2}
The new matrix $M^{(1)}(z)$ such that:\\
$(I)$ Analyticity: $M^{(1)}(z)$ is analytic in $z\in \mathbb{C}\setminus\left\{\mathbb{R}
\cup\mathcal {Z}\cup\mathcal {Z}^{*}\right\}$.\\
$(II)$ Jump condition: as $z$ approaches the real axis from above and below, $M^{(1)}(z)$ satisfies
\begin{align}
M_{+}^{(1)}(z)=M_{-}^{(1)}(z)V^{(1)}(z), \quad z\in \mathbb{R},
\end{align}
where \begin{align}\label{4.17}
       V^{(1)}=\left\{\begin{aligned}
      &\left(
        \begin{array}{cc}
      1 & \gamma^{*}(z)T(z)^{2}e^{-2it\theta} \\
      0 & 1 \\
        \end{array}
      \right)\left(
     \begin{array}{cc}
       1 & 0 \\
       \gamma(z)T(z)^{-2}e^{2it\theta} & 1 \\
      \end{array}
    \right),\quad z\in C\setminus \Gamma,\\
   &\left(
    \begin{array}{cc}
    1 & 0 \\
    \frac{\gamma(z)T_{-}^{-2}(z)}{1+|\gamma(z)|^{2}}e^{2it\theta} & 1 \\
     \end{array}
   \right)\left(
    \begin{array}{cc}
    1 & \frac{\gamma^{*}(z)T_{+}^{2}(z)}{1+|\gamma(z)|^{2}}e^{-2it\theta} \\
    0 & 1 \\
   \end{array}
  \right),\qquad z\in\Gamma.
   \end{aligned}\right.
   \end{align}\\
$(III)$  Asymptotic behaviours:
\begin{align}\label{4.18}
M^{(1)}(z)=\mathbb{I}+\mathcal {O}(z^{-1}),\quad z\rightarrow\infty.
\end{align}
$(IV)$  The residue conditions
\begin{align}\label{4.19}
\begin{split}
&\mathop{Res}\limits_{z=z_{k}}M^{(1)}=\left\{\begin{aligned}
&\lim_{z\rightarrow z_{k}}M^{(1)}\left(\begin{array}{cc}
    0 & c_{k}^{-1}\left((\frac{1}{T})'(z_{k})\right)^{-2}e^{-2it\theta}\\
    0 & 0 \\
  \end{array}
\right),~k\in \Delta_{z_{0}}^{-},\\
&\lim_{z\rightarrow z_{k}}M^{(1)}\left(
  \begin{array}{cc}
    0 & 0 \\c_{k}^{-1}T^{-2}(z_{k})e^{2it\theta} & 0 \\
  \end{array}\right),~~~~~~~~~~k\in \Delta_{z_{0}}^{+},
\end{aligned}\right.\\
&\mathop{Res}\limits_{z=z^{*}_{k}}M^{(1)}=\left\{\begin{aligned}
&\lim_{z\rightarrow z^{*}_{k}}M^{(1)}\left(\begin{array}{cc}
    0 & 0\\
    -\bar{c}_{k}^{-1}(T'(z^{*}_{k}))^{-2}e^{2it\theta^{*}(z)} & 0 \\
  \end{array}
\right),~~~k\in \Delta_{z_{0}}^{-},\\
&\lim_{z\rightarrow z^{*}_{k}}M^{(1)}\left(
  \begin{array}{cc}
    0 & -c^{*}_{k}(T(z^{*}_{k}))^{2}e^{-2it\theta^{*}(z)} \\0 & 0 \\
  \end{array}\right),~~~~~~k\in \Delta_{z_{0}}^{+}.
\end{aligned}\right.
\end{split}
\end{align}
\end{RHP}

\section{A mixed $\bar{\partial}$-problem}
We now will deform the jump contour of \textbf{RH Problem} \ref{RHP2} so that the oscillation term in the jump matrix is bounded on the transformed contour. It follows that from Eq.\eqref{4.2}, the phase function Eq.\eqref{4.1} has two phase points at $z_{0}$ and $z_{1}$. As usual, the new contour is chosen as folllows
\begin{align}\label{5.1}
\Sigma^{(2)}=\Sigma_{1}\cup \Sigma_{2} \cup\Sigma_{3} \cup\Sigma_{4} \cup\Sigma_{5} \cup\Sigma_{6}
\cup\Sigma_{7}\cup \Sigma_{8},
\end{align}
and consists of the rays $z_{0}+e^{i\phiup}R^{+}$ and $z_{1}+e^{i\phiup}R^{+}$ with $\phiup=\frac{\pi}{4},\frac{3\pi}{4},\frac{5\pi}{4},\frac{7\pi}{4}$. Then the contour $\Sigma^{(2)}$ and the real axis $R$ separate the complex into ten sectors denoted by $\Omega_{j},$ $(j=1,2,\cdots,6,7^{\pm},8^{\pm})$  shown in Fig. 3.

\centerline{\begin{tikzpicture}[scale=0.6]
\draw[->][thick](2,0)--(4,-2);
\draw[->][thick](2,0)--(4,2);
\draw[-][thick](0,2)[thick]node[above]{$\Omega_{2}$}--(5,-3)[thick]node[right]{$\Sigma_{4}$};
\draw[->][thick](0,2)--(1,1)[thick]node[right]{$\Sigma_{6}$};
\draw[->][thick](0,-2)[thick]node[below]{$\Omega_{5}$}--(1,-1)[thick]node[right]{$\Sigma_{8}$};
\draw[->][thick](-5,-3)--(-4,-2);
\draw[->][thick](-5,3)--(-4,2);
\draw[->][thick](-2,0)--(-1,1)[thick]node[left]{$\Sigma_{5}$};
\draw[->][thick](-2,0)--(-1,-1)[thick]node[left]{$\Sigma_{7}$};
\draw[-][thick](0,-2)--(5,3)[thick]node[right]{$\Sigma_{1}$};
\draw[-][thick](0,2)--(-5,-3)[thick]node[left]{$\Sigma_{3}$};
\draw[-][thick](0,-2)--(-5,3)[thick]node[left]{$\Sigma_{2}$};
\draw[-][dashed](-8,0)--(-7,0);
\draw[-][dashed](-7,0)--(-6,0);
\draw[-][dashed](-6,0)--(-5,0);
\draw[-][dashed](-5,0)--(-4,0);
\draw[-][dashed](-4,0)--(-3,0);
\draw[-][dashed](-3,0)--(-2,0);
\draw[-][dashed](-2,0)--(-1,0);
\draw[-][dashed](-1,0)--(0,0);
\draw[-][dashed](0,0)--(1,0);
\draw[-][dashed](1,0)--(2,0);
\draw[-][dashed](2,0)--(3,0);
\draw[-][dashed](3,0)--(4,0);
\draw[-][dashed](4,0)--(5,0);
\draw[-][dashed](5,0)--(6,0);
\draw[-][dashed](6,0)--(7,0);
\draw[-][dashed](7,0)--(8,0);
\draw[-][dashed](0,2)--(0,1);
\draw[-][dashed](0,1)--(0,0);
\draw[-][dashed](0,0)--(0,-1);
\draw[-][dashed](0,-1)--(0,-2);
\draw[fill] (5,1) node{$\Omega_{1}$};
\draw[fill] (5,-1) node{$\Omega_{6}$};
\draw[fill] (0.7,0.6) node{$\Omega_{7^{+}}$};
\draw[fill] (0.7,-0.6) node{$\Omega_{8^{+}}$};
\draw[fill] (-0.6,0.6) node{$\Omega_{7^{-}}$};
\draw[fill] (-0.6,-0.6) node{$\Omega_{8^{-}}$};
\draw[fill] (-5,1) node{$\Omega_{3}$};
\draw[fill] (-5,-1) node{$\Omega_{4}$};
\end{tikzpicture}}
\centerline{\noindent {\small \textbf{Figure 3.} The contour of deformation from $\mathbb{R}$ to $\Sigma^{(2)}$}.}

For technical reasons, we define
\begin{align}\label{5.2}
\Xi_{\mathcal {Z}}(z)=\left\{\begin{aligned}
&1,~~dist(z,\mathcal{Z}\cup \mathcal{Z}^{*})<\rho/3, \\
&0,~~dist(z,\mathcal{Z}\cup \mathcal{Z}^{*})>2\rho/3.
\end{aligned}\right.
\end{align}
where \begin{align}\label{5.3}
\rho=\frac{1}{2}\min_{\lambda,\zeta\in \mathcal{Z}\cup \mathcal{Z}^{*} \lambda\neq\mu}|\lambda-\zeta|.
\end{align}
Note that the discrete spectrum appear as conjugate pairs from Eq.\eqref{3.9}, moreover one has $dist(\mathcal{Z}\cup \mathcal{Z}^{*}, R)>\rho$  from the Assumption \eqref{ass1}.

Now we introduce a new matrix $M^{(2)}$ such that the jump contour of the \textbf{RH Problem} \ref{RHP2} is transformed from $\Sigma^{(1)}$ to $\Sigma^{(2)}$
\begin{align}\label{5.4}
M^{(2)}(z)=M^{(1)}(z)R^{(2)}(z),
\end{align}
\begin{rem}
The selection of matrix $R^{(2)}(z)$ shown in Fig.4 and Fig.5 needs to meet the following conditions:\\
$\blacktriangleright$ To remove the jump on the real axis and the new analytic jump matrix has the expected exponential decay along the contour $\Sigma^{(2)}$.\\
$\blacktriangleright$ The norm of $R^{(2)}(z)$ should be controlled to ensure that the long-time asymptotic behavior of $\overline{\partial}$-contribution to the solution $q(x,t)$ is negligible.\\
$\blacktriangleright$ The residues are not affected by this transformation.
\end{rem}

\centerline{\begin{tikzpicture}[scale=0.6]
\draw[-][dashed](-7,0)--(-6,0);
\draw[-][dashed](-6,0)--(-5,0);
\draw[-][dashed](-5,0)--(-4,0);
\draw[-][dashed](-4,0)--(-3,0);
\draw[-][dashed](-3,0)--(-2,0);
\draw[-][dashed](-2,0)--(-1,0);
\draw[-][dashed](-1,0)--(0,0);
\draw[-][dashed](0,0)--(1,0);
\draw[-][dashed](1,0)--(2,0);
\draw[-][dashed](2,0)--(3,0);
\draw[-][dashed](3,0)--(4,0);
\draw[-][dashed](4,0)--(5,0);
\draw[-][dashed](5,0)--(6,0);
\draw[-][dashed](6,0)--(7,0);
\draw[-][thick](-4,-4)--(4,4);
\draw[-][thick](-4,4)--(4,-4);
\draw[->][thick](2,2)--(3,3);
\draw[->][thick](-4,4)--(-3,3);
\draw[->][thick](-4,-4)--(-3,-3);
\draw[->][thick](2,-2)--(3,-3);
\draw[fill] (3.2,3)node[below right]{$\Sigma_{1}$};
\draw[fill] (3.2,-3)node[above right]{$\Sigma_{4}$};
\draw[fill] (-3.2,3)node[below]{$\Sigma_{6}$};
\draw[fill] (-3.2,-3)node[above left]{$\Sigma_{8}$};
\draw[fill] (0,0)node[below]{$z_{1}$};
\draw[fill] (1,0)node[below right]{$\Omega_{6}$};
\draw[fill] (1,0)node[above right]{$\Omega_{1}$};
\draw[fill] (0,-1)node[below]{$\Omega_{5}$};
\draw[fill] (0,1)node[above]{$\Omega_{2}$};
\draw[fill] (-1,0)node[below left]{$\Omega_{8^{+}}$};
\draw[fill] (-1,0)node[above left]{$\Omega_{7^{+}}$};
\draw[fill] (7,3)node[below]{$
\left(
  \begin{array}{cc}
    1 & 0 \\
    -R_{1}e^{2it\theta} & 1 \\
  \end{array}
\right)
$};
\draw[fill] (7,-2)node[below]{$
\left(
  \begin{array}{cc}
    1 & R_{6}e^{-2it\theta} \\
    0 & 1 \\
  \end{array}
\right)
$};
\draw[fill] (-7,2.5)node[below]{$
\left(
  \begin{array}{cc}
    1 & -R_{7^{+}}e^{-2it\theta} \\
    0 & 1 \\
  \end{array}
\right)
$};
\draw[fill] (-7,-1)node[below]{$
\left(
  \begin{array}{cc}
    1 & 0 \\
    R_{8^{+}}e^{2it\theta} & 1 \\
  \end{array}
\right)
$};
\draw[fill] (0,2)node[above]{$
\left(
  \begin{array}{cc}
    1 & 0 \\
    0 & 1 \\
  \end{array}
\right)
$};
\draw[fill] (0,-2)node[below]{$
\left(
  \begin{array}{cc}
    1 & 0 \\
    0 & 1 \\
  \end{array}
\right)
$};
\end{tikzpicture}}
\centerline{\noindent {\small \textbf{Figure 4.} The matrix $R^{(2)}(z)$ near $z_{1}$}.}

\centerline{\begin{tikzpicture}[scale=0.6]
\draw[-][dashed](-7,0)--(-6,0);
\draw[-][dashed](-6,0)--(-5,0);
\draw[-][dashed](-5,0)--(-4,0);
\draw[-][dashed](-4,0)--(-3,0);
\draw[-][dashed](-3,0)--(-2,0);
\draw[-][dashed](-2,0)--(-1,0);
\draw[-][dashed](-1,0)--(0,0);
\draw[-][dashed](0,0)--(1,0);
\draw[-][dashed](1,0)--(2,0);
\draw[-][dashed](2,0)--(3,0);
\draw[-][dashed](3,0)--(4,0);
\draw[-][dashed](4,0)--(5,0);
\draw[-][dashed](5,0)--(6,0);
\draw[-][dashed](6,0)--(7,0);
\draw[-][thick](-4,-4)--(4,4);
\draw[-][thick](-4,4)--(4,-4);
\draw[->][thick](2,2)--(3,3);
\draw[->][thick](-4,4)--(-3,3);
\draw[->][thick](-4,-4)--(-3,-3);
\draw[->][thick](2,-2)--(3,-3);
\draw[fill] (3.2,3)node[below right]{$\Sigma_{5}$};
\draw[fill] (3.2,-3)node[above right]{$\Sigma_{7}$};
\draw[fill] (-3.2,3)node[below]{$\Sigma_{2}$};
\draw[fill] (-3.2,-3)node[above left]{$\Sigma_{3}$};
\draw[fill] (0,0)node[below]{$z_{0}$};
\draw[fill] (1,0)node[below right]{$\Omega_{8^{-}}$};
\draw[fill] (1,0)node[above right]{$\Omega_{7^{-}}$};
\draw[fill] (0,-1)node[below]{$\Omega_{5}$};
\draw[fill] (0,1)node[above]{$\Omega_{2}$};
\draw[fill] (-1,0)node[below left]{$\Omega_{4}$};
\draw[fill] (-1,0)node[above left]{$\Omega_{3}$};
\draw[fill] (7,3)node[below]{$
\left(
  \begin{array}{cc}
    1 & -R_{7^{-}}e^{-2it\theta} \\
    0 & 1 \\
  \end{array}
\right)
$};
\draw[fill] (7,-2)node[below]{$
\left(
  \begin{array}{cc}
    1 & 0 \\
    R_{8^{-}}e^{2it\theta} & 1 \\
  \end{array}
\right)
$};
\draw[fill] (-7,2.5)node[below]{$
\left(
  \begin{array}{cc}
    1 & 0 \\
    R_{3}e^{2it\theta} & 1 \\
  \end{array}
\right)
$};
\draw[fill] (-7,-1)node[below]{$
\left(
  \begin{array}{cc}
    1 & R_{4}e^{-2it\theta} \\
    0 & 1 \\
  \end{array}
\right)
$};
\draw[fill] (0,2)node[above]{$
\left(
  \begin{array}{cc}
    1 & 0 \\
    0 & 1 \\
  \end{array}
\right)
$};
\draw[fill] (0,-2)node[below]{$
\left(
  \begin{array}{cc}
    1 & 0 \\
    0 & 1 \\
  \end{array}
\right)
$};
\end{tikzpicture}}
\centerline{\noindent {\small \textbf{Figure 5.} The matrix $R^{(2)}(z)$ near $z_{0}$}.}

The matrices $R_{j}$ ($j=1,3,4,6,7^{\pm},8^{\pm}$) defined in Fig.4 and Fig.5 such that
\begin{prop}\label{prop4}
The matrices $R_{j}:$ $\bar{\Omega}\rightarrow C$ have the following boundary values:\\
\begin{align}
&R_{1}(z)=\left\{\begin{aligned}
&\gamma(z)T^{-2}(z),\qquad\qquad\qquad\qquad\qquad z\in(z_{1},\infty),\\
&\gamma(z_{1})T_{0}^{-2}(z_{1})(z-z_{1})^{-2i\nu(z_{1})}(1-\Xi_{\mathcal {Z}_{1}}),~~z\in\Sigma_{1},
\end{aligned}\right. \label{5.5} \\
&R_{6}(z)=\left\{\begin{aligned}
&\gamma^{*}(z)T^{2}(z),\qquad\qquad\qquad\qquad\qquad z\in(z_{1},\infty),\\
&\gamma^{*}(z_{1})T_{0}^{2}(z_{1})(z-z_{1})^{2i\nu(z_{1})}(1-\Xi_{\mathcal {Z}_{1}}),~~z\in\Sigma_{4},
\end{aligned}\right. \label{5.6} \\
&R_{3}(z)=\left\{\begin{aligned}
&\gamma(z)T^{-2}(z),\qquad\qquad\qquad\qquad\qquad z\in(-\infty,z_{0}),\\
&\gamma(z_{0})T_{0}^{-2}(z_{0})(z-z_{0})^{-2i\nu(z_{0})}(1-\Xi_{\mathcal {Z}_{0}}),~~z\in\Sigma_{2},
\end{aligned}\right. \label{5.7} \\
&R_{4}(z)=\left\{\begin{aligned}
&\gamma^{*}(z)T^{2}(z),\qquad\qquad\qquad\qquad\qquad z\in(-\infty,z_{0}),\\
&\gamma^{*}(z_{0})T_{0}^{2}(z_{0})(z-z_{0})^{2i\nu(z_{0})}(1-\Xi_{\mathcal {Z}_{0}}),~~z\in\Sigma_{3},
\end{aligned}\right. \label{5.8}  \\
&R_{7^{+}}(z)=\left\{\begin{aligned}
&\frac{\gamma^{*}(z)T_{+}^{-2}(z)}{1+|\gamma(z)|^{2}},\qquad\qquad\qquad\qquad\qquad z\in(z_{0},z_{1}),\\
&\frac{\gamma^{*}(z_{1})T_{0}^{-2}(z_{1})}{1+|\gamma(z_{1})|^{2}}(z-z_{1})^{-2i\nu(z_{1})}
(1-\Xi_{\mathcal {Z}_{1}}),~~z\in\Sigma_{6},
\end{aligned}\right.  \label{5.9} \\
&R_{8^{+}}(z)=\left\{\begin{aligned}
&\frac{\gamma (z)T_{-}^{-2}(z)}{1+|\gamma(z)|^{2}},\qquad\qquad\qquad\qquad\qquad z\in(z_{0},z_{1}),\\
&\frac{\gamma(z_{1})T_{0}^{-2}(z_{1})}{1+|\gamma(z_{1})|^{2}}(z-z_{1})^{-2i\nu(z_{1})}
(1-\Xi_{\mathcal {Z}_{1}}),~~z\in\Sigma_{8},
\end{aligned}\right. \label{5.10} \\
&R_{7^{-}}(z)=\left\{\begin{aligned}
&\frac{\gamma^{*}(z)T_{+}^{2}(z)}{1+|\gamma(z)|^{2}},\qquad\qquad\qquad\qquad\qquad z\in(z_{0},z_{1}),\\
&\frac{\gamma^{*}(z_{0})T_{0}^{2}(z_{0})}{1+|\gamma(z_{0})|^{2}}(z-z_{0})^{2i\nu(z_{0})}
(1-\Xi_{\mathcal {Z}_{0}}),~~z\in\Sigma_{5},
\end{aligned}\right. \label{5.11} \\
&R_{8^{-}}(z)=\left\{\begin{aligned}
&\frac{\gamma (z)T_{-}^{-2}(z)}{1+|\gamma(z)|^{2}},\qquad\qquad\qquad\qquad\qquad z\in(z_{0},z_{1}),\\
&\frac{\gamma(z_{0})T_{0}^{-2}(z_{0})}{1+|\gamma(z_{0})|^{2}}(z-z_{0})^{-2i\nu(z_{0})}
(1-\Xi_{\mathcal {Z}_{0}}),~~z\in\Sigma_{7},
\end{aligned}\right. \label{5.12}
\end{align}
These matrices $R_{j}$ are estimated as follows
\begin{align}
&|R_{j}|\lesssim \sin^{2}(arg(z-z_{1}))+\langle Re(z)\rangle^{-\frac{1}{2}}, \label{5.13}\\
&|\bar{\partial}R_{j}(z)|\lesssim \left|\bar{\partial}\Xi_{1}(z)\right|+ \left|p'_{j}(Re(z))\right|+|z-z_{1}|^{-1/2},\label{5.14}
\end{align}
for $j=1,6,7^{+},8^{+}$, similarly for $j=3,4,7^{-},8^{-}$, we have
\begin{align}
&|R_{j}|\lesssim \sin^{2}(arg(z-z_{0}))+\langle Re(z)\rangle^{-\frac{1}{2}}, \label{5.15}\\
&|\bar{\partial}R_{j}(z)|\lesssim \left|\bar{\partial}\Xi_{0}(z)\right|+ \left|p'_{j}(Re(z))\right|+|z-z_{0}|^{-1/2},\label{5.16}
\end{align}
where
\begin{align}
&p_{1}(z)=p_{3}(z)=\gamma(z),\quad p_{7^{+}}(z)=p_{7^{-}}(z)=\frac{\gamma^{*}(z)}{1+|\gamma(z)|^{2}},\label{5.17}\\
&p_{4}(z)=p_{6}(z)=\gamma^{*}(z),\quad p_{8^{+}}(z)=p_{8^{-}}(z)=\frac{\gamma(z)}{1+|\gamma(z)|^{2}},\label{5.18}\\
&\langle (\cdot)\rangle=\sqrt{1+(\cdot)^{2}}.\label{5.19}
\end{align}
\end{prop}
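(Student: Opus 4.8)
The plan is to produce each $R_j$ as an \emph{explicit} continuous extension off the contour that interpolates between its two prescribed boundary values, and then to read the sizes of $R_j$ and of $\bar\partial R_j$ directly off that formula. I would treat the sector $\Omega_1$ adjacent to $(z_1,\infty)$ and to the ray $\Sigma_1$ as the model case; every other sector is handled identically after relabelling $z_1\leftrightarrow z_0$, $\gamma\leftrightarrow\gamma^{*}$, $T\leftrightarrow T^{-1}$ and rotating the reference angle. Writing $\phi=\arg(z-z_1)$, I set
\begin{align*}
R_1(z)=\Big[\,p_1(z_1)T_0^{-2}(z_1)(z-z_1)^{-2i\nu(z_1)}\big(1-\cos2\phi\big)
+p_1(\mathrm{Re}\,z)T^{-2}(z)\cos2\phi\,\Big]\big(1-\Xi_{\mathcal{Z}_1}(z)\big),
\end{align*}
with $p_1=\gamma$. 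By construction $\cos2\phi=1$ on the real axis, where $\Xi_{\mathcal{Z}_1}\equiv0$ because $\mathrm{dist}(\mathcal{Z}\cup\mathcal{Z}^{*},\mathbb{R})>\rho$, while $\cos2\phi=0$ on $\Sigma_1$; hence $R_1$ reduces to $\gamma(z)T^{-2}(z)$ and to $\gamma(z_1)T_0^{-2}(z_1)(z-z_1)^{-2i\nu(z_1)}(1-\Xi_{\mathcal{Z}_1})$ respectively, which is precisely \eqref{5.5}. In this way \eqref{5.5}--\eqref{5.12} hold \emph{by definition}, and the residue conditions are untouched since $\Xi_{\mathcal{Z}_1}\equiv1$ near each pole.

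For the bounds I would record two analytic inputs. First, the direct scattering map sends $q_0\in H^{1,1}(\mathbb{R})$ to $\gamma\in H^{1,1}(\mathbb{R})$; thus $\gamma$ is $\tfrac12$-H\"older continuous (Sobolev embedding $H^1\hookrightarrow C^{0,1/2}$), and from $|\gamma(s)|^{2}=2\big|\int_s^{\infty}\gamma\gamma'\big|$ together with $\langle\cdot\rangle\gamma\in L^2$ one gets the pointwise decay $|\gamma(s)|\lesssim\langle s\rangle^{-1/2}$. Second, Proposition \ref{prop3}(V) gives $|T(z)-T_0(z_1)(z-z_1)^{i\nu(z_1)}|\lesssim\|r\|_{H^1}|z-z_1|^{1/2}$, and $|T|$ is bounded by Proposition \ref{prop3}(II). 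Using $1-\cos2\phi=2\sin^2\phi$, the model formula splits as $R_1=2\sin^2\phi\,[\mathrm{frozen}]+\cos2\phi\,\gamma(\mathrm{Re}\,z)T^{-2}$; bounding the frozen factor by $|\gamma(z_1)|\lesssim1$ and the second factor by $\langle\mathrm{Re}\,z\rangle^{-1/2}$ yields \eqref{5.13}, and \eqref{5.15} follows verbatim at $z_0$.

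For the $\bar\partial$ estimate I would use $\bar\partial=\tfrac12(\partial_{\mathrm{Re}}+i\partial_{\mathrm{Im}})$, which annihilates the holomorphic factors $(z-z_1)^{-2i\nu(z_1)}$ and $T^{-2}(z)$, so $\bar\partial R_1$ collects exactly three contributions: (i) the cutoff term $\bar\partial\Xi_{\mathcal{Z}_1}$, supported in the fixed annulus $\rho/3<\mathrm{dist}(z,\mathcal{Z}\cup\mathcal{Z}^{*})<2\rho/3$; (ii) the term from $\bar\partial\,\gamma(\mathrm{Re}\,z)=\tfrac12\gamma'(\mathrm{Re}\,z)$, i.e. $\tfrac12 p_1'(\mathrm{Re}\,z)\cos2\phi\,T^{-2}$, producing $|p_1'(\mathrm{Re}\,z)|$; and (iii) the term from $\bar\partial\cos2\phi$, where $\bar\partial\phi=\bar\partial\,\mathrm{Im}\log(z-z_1)$ has modulus $\tfrac12|z-z_1|^{-1}$. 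The decisive point is that $\bar\partial\cos2\phi$ multiplies the \emph{difference} $\gamma(\mathrm{Re}\,z)T^{-2}-p_1(z_1)T_0^{-2}(z-z_1)^{-2i\nu(z_1)}$; estimating this difference by $|\gamma(\mathrm{Re}\,z)-\gamma(z_1)|\,|T|^{-2}+|\gamma(z_1)|\,|T^{-2}-T_0^{-2}(z-z_1)^{-2i\nu}|\lesssim|z-z_1|^{1/2}$ through the $\tfrac12$-H\"older bound and Proposition \ref{prop3}(V), then dividing by $|z-z_1|$, gives the claimed $|z-z_1|^{-1/2}$. Collecting (i)--(iii) yields \eqref{5.14}, and the $z_0$ sectors give \eqref{5.16}.

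The main obstacle is exactly this cancellation in step (iii): turning the naive $|z-z_1|^{-1}$ coming from differentiating the angle into an integrable $|z-z_1|^{-1/2}$. This is what forces the extension to \emph{freeze} the coefficient at the stationary point and interpolate toward it, and it is the one place where both the $\tfrac12$-H\"older regularity of $\gamma$ (hence the $H^{1,1}$ hypothesis on $q_0$) and the sharp local behaviour of $T$ from Proposition \ref{prop3}(V) are genuinely needed; the remaining verifications are bookkeeping over the eight sectors around $z_0$ and $z_1$.
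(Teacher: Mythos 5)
Your construction is correct, and it is worth noting at the outset that the paper itself states this proposition with no proof at all: the $R_j$ are only ``defined'' by Figures 4 and 5, and the estimates \eqref{5.13}--\eqref{5.16} are asserted, the authors implicitly deferring to the analogous computations in \cite{AIHP} and \cite{JQ-19}. Your proposal supplies exactly the argument those references use: an explicit interpolation $R_j=\bigl[\text{frozen value}\cdot(1-\cos2\phi)+p_j(\mathrm{Re}\,z)T^{\mp2}(z)\cos2\phi\bigr](1-\Xi_{\mathcal Z})$, the pointwise decay $|\gamma(s)|\lesssim\langle s\rangle^{-1/2}$ and $\tfrac12$-H\"older continuity from $\gamma\in H^{1,1}$, and the key cancellation in which $\bar\partial\cos2\phi\sim|z-z_j|^{-1}$ multiplies the difference of the two interpolated coefficients, which is $O(|z-z_j|^{1/2})$ by H\"older continuity together with Proposition \ref{prop3}$(V)$. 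You correctly identify this cancellation as the crux; without freezing the coefficient at the stationary point the naive bound $|z-z_j|^{-1}$ would not be integrable against $dA$ in Section 9. Two small points deserve explicit mention in a written-up version: (i) you should record that $T^{\mp2}$ is bounded on the closure of each sector $\Omega_j$ away from the $\rho/3$-neighbourhoods of $\mathcal Z\cup\mathcal Z^{*}$ (the Blaschke factors are bounded above and below there, and on the support of $\bar\partial\Xi_{\mathcal Z}$ the prefactor multiplying $\bar\partial\Xi_{\mathcal Z}$ is therefore bounded), which is what makes contribution (i) of your decomposition legitimate; and (ii) your uniform recipe actually exposes some internal inconsistencies in the powers of $T_{\pm}$ appearing in the paper's displayed boundary values (compare \eqref{5.9} with \eqref{5.11}, and both with the factorization \eqref{4.17} on $(z_0,z_1)$) --- these are typos in the statement rather than defects of your proof, but a careful write-up should fix the exponents so that the extension really does remove the jump of $M^{(1)}$ on $\mathbb{R}$.
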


Employing $M^{(2)}(z)$ defined by $R^{(2)}(z)$, we can get a new RH problem.
\begin{RHP}\label{RHP3}
The matrix-valued function $M^{(2)}(z)$ such that:\\
$(I)$ Analyticity: $M^{(2)}(z)$ is continuous in complex $\mathbb{C}$, sectionally continuous first partial derivatives in $\mathbb{C}\setminus(\Sigma^{(2)}\cup\mathcal {Z}\cup\overline{\mathcal {Z}})$.\\
$(II)$ Jump condition:
\begin{align}\label{5.20}
M^{(2)}_{+}(z)=M^{(2)}_{-}(z)V^{(2)}(z)
\end{align}
where  the jump matrix
\begin{align*}
V^{(2)}(z)=\left\{\begin{aligned}
&\left(
  \begin{array}{cc}
    1 & 0 \\
    R_{1}e^{2it\theta} & 1 \\
  \end{array}
\right), \quad z\in\Sigma_{1},\\
&\left(
  \begin{array}{cc}
    1 & R_{7^{+}}e^{-2it\theta} \\
    0 & 1 \\
  \end{array}
\right), \quad z\in\Sigma_{6},\\
&\left(
  \begin{array}{cc}
    1 & 0 \\
    R_{8^{+}}e^{2it\theta} & 1 \\
  \end{array}
\right), \quad z\in\Sigma_{8},\\
&\left(
  \begin{array}{cc}
    1 &  R_{6}e^{-2it\theta} \\
    0 & 1 \\
  \end{array}
\right), \quad z\in\Sigma_{4},\\
&\left(
  \begin{array}{cc}
    1 &  R_{7^{-}}e^{-2it\theta} \\
    0 & 1 \\
  \end{array}
\right), \quad z\in\Sigma_{5},\\
&\left(
  \begin{array}{cc}
    1 & 0 \\
    R_{3}e^{2it\theta} & 1 \\
  \end{array}
\right), \quad z\in\Sigma_{2},\\
&\left(
  \begin{array}{cc}
    1 &  R_{4}e^{-2it\theta} \\
    0 & 1 \\
  \end{array}
\right), \quad z\in\Sigma_{3},\\
&\left(
  \begin{array}{cc}
    1 & 0 \\
    R_{8^{-}}e^{2it\theta} & 1 \\
  \end{array}
\right), \quad z\in\Sigma_{7}.
\end{aligned}\right.
\end{align*}\\
$(III)$ Analyticity behavior:
\begin{align}\label{5.21}
M^{(2)}(z)=\mathbb{I}+\mathcal {O}(z^{-1}),\quad z\rightarrow\infty.
\end{align}\\
$(IV)$ The $\bar{\partial}$-derivative: For $\mathbb{C}\setminus(\Sigma^{(2)}\cup\mathcal {Z}\cup\overline{\mathcal {Z}})$
\begin{align}\label{5.22}
\bar{\partial}M^{(2)}(z)=M^{(1)}\bar{\partial}R^{(2)}(z),
\end{align}
where
\begin{align}\label{5.23}
\bar{\partial}R^{(2)}(z)=\left\{\begin{aligned}
&\left(
                                  \begin{array}{cc}
                                    1 & (-1)^{m}\bar{\partial}R_{j}(z) e^{-2it\theta}\\
                                    0 & 1 \\
                                  \end{array}
                                \right),\quad j=7^{\pm},4,6,\\
&\left(
                                  \begin{array}{cc}
                                    1 & 0\\
                                    (-1)^{m}\bar{\partial}R_{j}(z) e^{2it\theta} & 1 \\
                                  \end{array}
                                \right),\quad j=8^{\pm},1,3,\\
& \left(
                                  \begin{array}{cc}
                                    1 & 0\\
                                    0 & 1 \\
                                  \end{array}
                                \right),\qquad\qquad\qquad\quad z\in\Omega_{2}\cup\Omega_{5},
\end{aligned}\right.
\end{align}
and $m=\left\{\begin{aligned}
1, j=7^{\pm}, 1,3,\\
0, j=8^{\pm},4, 6,
\end{aligned}\right.$.\\
$(V)$  The residue conditions: $M^{(2)}$ has simple poles at the set $\mathcal{Z}\cup\overline{\mathcal {Z}}$
\begin{align}\label{5.24}
\begin{split}
&\mathop{Res}\limits_{z=z_{k}}M^{(2)}=\left\{\begin{aligned}
&\lim_{z\rightarrow z_{k}}M^{(2)}\left(\begin{array}{cc}
    0 & c_{k}^{-1}\left((\frac{1}{T})'(z_{k})\right)^{-2}e^{-2it\theta}\\
    0 & 0 \\
  \end{array}
\right),~k\in \Delta_{z_{0}}^{-},\\
&\lim_{z\rightarrow z_{k}}M^{(2)}\left(
  \begin{array}{cc}
    0 & 0 \\c_{k}^{-1}T^{-2}(z_{k})e^{2it\theta} & 0 \\
  \end{array}\right),~~~~~~~~~~k\in \Delta_{z_{0}}^{+},
\end{aligned}\right.\\
&\mathop{Res}\limits_{z=z^{*}_{k}}M^{(2)}=\left\{\begin{aligned}
&\lim_{z\rightarrow z^{*}_{k}}M^{(2)}\left(\begin{array}{cc}
    0 & 0\\
    -\bar{c}_{k}^{-1}(T'(z^{*}_{k}))^{-2}e^{2it\theta^{*}(z)} & 0 \\
  \end{array}
\right),~~~k\in \Delta_{z_{0}}^{-},\\
&\lim_{z\rightarrow z^{*}_{k}}M^{(2)}\left(
  \begin{array}{cc}
    0 & -c^{*}_{k}(T(z^{*}_{k}))^{2}e^{-2it\theta^{*}(z)} \\0 & 0 \\
  \end{array}\right),~~~~~~k\in \Delta_{z_{0}}^{+}.
\end{aligned}\right.
\end{split}
\end{align}
\end{RHP}
\section{The decomposition of the mixed $\overline{\partial}$-RH probelm}
In this section, the pure RH problem in the mixed \textbf{RH problem} \ref{RHP3} is separated from setting $\overline{\partial}R^{(2)}(z)=0$, that is, the soliton solution corresponding to the discrete spectrum is denoted as $M_{RHP}^{(2)}(z)$. We then establish the following RH problem
\begin{RHP}\label{RHP4}
Find a matrix-valued function $M^{(2)}_{RHP}(z)$ admits the following properties:\\
$(I)$ Analyticity: $M^{(2)}_{RHP}(z)$ is analytic in $\mathbb{C}\setminus(\Sigma^{(2)}\cup\mathcal {Z}\cup\overline{\mathcal {Z}})$.\\
$(II)$ Analytic behavior:
\begin{align}\label{6.1}
M^{(2)}_{RHP}(z)=\mathbb{I}+\mathcal {O}(z^{-1}),\quad z\rightarrow\infty.
\end{align}\\
$(III)$ The jump condition:
\begin{align}\label{6.2}
M_{RHP,+}^{(2)}(z)=M_{RHP,-}^{(2)}(z)V^{(2)}(z),\quad z\in \mathbb{R}.
\end{align}
$(IV)$ The $\bar{\partial}$-derivative: $\overline{\partial}R^{(2)}(z)=0$ for $z\in \mathbb{C}$.\\
$(V)$ The residue conditions: $M^{(2)}_{RHP}(z)$ has simple poles at the set $\mathcal {Z}\cup\overline{\mathcal {Z}}$.
\begin{align}\label{6.3}
\begin{split}
&\mathop{Res}\limits_{z=z_{k}}M^{(2)}_{RHP}(z)=\left\{\begin{aligned}
&\lim_{z\rightarrow z_{k}}M^{(2)}_{RHP}(z)\left(\begin{array}{cc}
    0 & c_{k}^{-1}\left((\frac{1}{T})'(z_{k})\right)^{-2}e^{-2it\theta}\\
    0 & 0 \\
  \end{array}
\right),~k\in \Delta_{z_{0}}^{-},\\
&\lim_{z\rightarrow z_{k}}M^{(2)}_{RHP}(z)\left(
  \begin{array}{cc}
    0 & 0 \\c_{k}^{-1}T^{-2}(z_{k})e^{2it\theta} & 0 \\
  \end{array}\right),~~~~~~~~~~k\in \Delta_{z_{0}}^{+},
\end{aligned}\right.\\
&\mathop{Res}\limits_{z=z^{*}_{k}}M^{(2)}_{RHP}(z)=\left\{\begin{aligned}
&\lim_{z\rightarrow z^{*}_{k}}M^{(2)}_{RHP}(z)\left(\begin{array}{cc}
    0 & 0\\
    -\bar{c}_{k}^{-1}(T'(z^{*}_{k}))^{-2}e^{2it\theta^{*}(z)} & 0 \\
  \end{array}
\right),~~~k\in \Delta_{z_{0}}^{-},\\
&\lim_{z\rightarrow z^{*}_{k}}M^{(2)}_{RHP}(z)\left(
  \begin{array}{cc}
    0 & -c^{*}_{k}(T(z^{*}_{k}))^{2}e^{-2it\theta^{*}(z)} \\0 & 0 \\
  \end{array}\right),~~~~~~k\in \Delta_{z_{0}}^{+}.
\end{aligned}\right.
\end{split}
\end{align}
\end{RHP}
The existence and asymptotic of $M^{(2)}_{RHP}(z)$ will shown in next section. In virtue of $M^{(2)}_{RHP}(z)$, a new matrix function $M^{(3)}(z)$ is constructed
\begin{align}\label{6.4}
M^{(3)}(z)=M^{(2)}(z)M^{(2)}_{RHP}(z)^{-1},
\end{align}
which satisfies a pure $\overline{\partial}$-problem.
\begin{RHP}\label{Dbar-RHP}
The matrix $M^{(3)}(z)$ defined in \eqref{6.4} such that:\\
$(I)$ $M^{(3)}(z)$ is continuous with sectionally continuous first partial derivatives in $\mathbb{C}\backslash(\Sigma^{(2)}\cup\mathcal{Z}\cup\mathcal{Z}^{*})$.\\
$(II)$ For $z\in \mathbb{C}$, one has
\begin{align}\label{6.5}
\bar{\partial}M^{(3)}(z)=M^{(3)}(z)M_{RHP}^{(2)}(z)\bar{\partial}R^{(2)}M_{RHP}^{(2)}
(z)^{-1}\triangleq M^{(3)}(z)W^{(3)}(z),
 \end{align}
where $\bar{\partial}R^{(2)}(z)$ is defined by \eqref{5.23}.\\
$(III)$  Asymptotic behavior:
 \begin{align}\label{6.6}
       M^{(3)}(z)=\mathbb{I}+\mathcal {O}(z^{-1}), \quad z\rightarrow\infty.
       \end{align}
\end{RHP}
\begin{proof}
The analytic and asymptotic properties of $M^{(2)}(z)$ and $M_{RHP}^{(2)}(z)$ are exactly given in \textbf{RH Problem} \ref{RHP3} and  \textbf{RH Problem} \ref{RHP4}. According to Eq.\eqref{6.3}, we can derive these properties of $M^{(3)}(z)$. Next, we will show that the matrix $M^{(3)}(z)$ has no  jumps through the real axis and has no poles. It follows $M^{(2)}(z)$ and $M_{RHP}^{(2)}(z)$ admit the same jump condition  that
\begin{align*}
M^{(3)}_{-}(z)^{-1}M^{(3)}_{+}(z)&=M_{RHP,-}(z)M^{(2)}_{-}(z)^{-1}M^{(2)}_{+}(z)M_{RHP,+}(z)^{-1}\\
&=M_{RHP,-}(z)V^{(2)}(z)\left(M_{RHP,-}(z)V^{(2)}(z)\right)^{-1}=\mathbb{I}.
\end{align*}
Note that for each $z_{k}\in\mathcal {Z}\cup\overline{\mathcal {Z}}$, $M^{(2)}(z)$ and $M_{RHP}^{(2)}(z)$ admit the same residue conditions, which appear in the left side of \eqref{6.3} denoted by a nilpotent matrix $\mathcal {Q}_{k}$. Taking the Laurent expansions yields
\begin{align*}
M^{(2)}(z)=C(z_{k})\left[\frac{\mathcal {Q}_{k}}{z-z_{k}}+\mathbb{I}\right]+\mathcal {O}(z-z_{k}),\\
M_{RHP}(z)=\widehat{C}(z_{k})\left[\frac{\mathcal {Q}_{k}}{z-z_{k}}+\mathbb{I}\right]+\mathcal {O}(z-z_{k}),
\end{align*}
where $C(z_{k})$ and $\hat{C}(z_{k})$ are constant terms,  we then can derive that
\begin{align*}
M^{(2)}(z)M_{RHP}^{(2)}(z)^{-1}=\mathcal {O}(1),
\end{align*}
which means has removable singularities at $z_{k}$. From the definition of $M^{(3)}(z)$, it can be obtained by direct calculation
\begin{align*}
\bar{\partial}M^{(3)}(z)&=\bar{\partial}(M^{(2)}(z)M_{RHP}^{(2)}(z)^{-1})
=\bar{\partial}M^{(2)}(z)M_{RHP}^{(2)}(z)^{-1}
=M^{(2)}(z)\bar{\partial}R^{(2)}(z)M_{RHP}^{(2)}(z)^{-1}\\
&=M^{(2)}(z)M_{RHP}^{(2)}(z)^{-1}(M_{RHP}^{(2)}(z)\bar{\partial}R^{(2)}(z)
M_{RHP}^{(2)}(z)^{-1})=M^{(3)}(z)W^{(3)}(z).
\end{align*}
\end{proof}

\begin{rem}
We transform jump contour $\Sigma^{(2)}$ into contour $\Sigma^{(3)}$ shown in Fig. 6. By comparing the contours $\Sigma^{(2)}$  and $\Sigma^{(3)}$, we find that a new curve is introduced on contour $\Sigma^{(3)}$ with jump matrix defined by
\begin{align*}
v_{9}=\left\{\begin{aligned}
&\mathbb{I},\qquad\qquad\qquad\qquad \qquad~~ z\in\left(-i\frac{z_{1}-z_{0}}{2}\tan(\pi/12),i\frac{z_{1}-z_{0}}{2}\tan(\pi/12)\right),\\
&\left(
  \begin{array}{cc}
    1 & (R_{7^{-}}-R_{7^{+}})e^{-2i\theta} \\
    0 & 1 \\
  \end{array}
\right),\quad z\in\left(i\frac{z_{1}-z_{0}}{2}\tan(\pi/12),iz_{1}\right),\\
&\left(
  \begin{array}{cc}
    1 & 0 \\
    (R_{8^{-}}-R_{8^{+}})e^{2i\theta} & 1 \\
  \end{array}
\right),\quad ~~ z\in\left(-iz_{1},-i\frac{z_{1}-z_{0}}{2}\tan(\pi/12)\right).
\end{aligned}\right.
\end{align*}
Similar to \cite{JQ-19}, we can derive that
\begin{align*}
|v_{9}-\mathbb{I}|\lesssim e^{-t},
\end{align*}
which implies that the contour does not contribute to the long-time asymptotic solution as $t\rightarrow\infty$.

\centerline{\begin{tikzpicture}[scale=0.6]
\draw[->][thick](2,0)--(4,-2);
\draw[->][thick](2,0)--(4,2);
\draw[-][thick](0,2)--(5,-3)[thick]node[right]{$\Sigma_{4}^{(3)}$};
\draw[->][thick](0,2)--(1,1);
\draw[->][thick](0,-2)--(1,-1);
\draw[->][thick](-5,-3)--(-4,-2);
\draw[->][thick](-5,3)--(-4,2);
\draw[->][thick](-2,0)--(-1,1);
\draw[->][thick](-2,0)--(-1,-1);
\draw[-][thick](0,-2)--(5,3)[thick]node[right]{$\Sigma_{1}^{(3)}$};
\draw[-][thick](0,2)--(-5,-3)[thick]node[left]{$\Sigma_{3}^{(3)}$};
\draw[-][thick](0,-2)--(-5,3)[thick]node[left]{$\Sigma_{2}^{(3)}$};
\draw[fill] (-1.2,1.2) [thick]node[left]{$\Sigma_{5}^{(3)}$};
\draw[fill] (-1.2,-1.2) [thick]node[left]{$\Sigma_{7}^{(3)}$};
\draw[fill] (1.3,1.2) [thick]node[right]{$\Sigma_{6}^{(3)}$};
\draw[fill] (1.3,-1.2) [thick]node[right]{$\Sigma_{8}^{(3)}$};
\draw[fill] (-2,0) [thick]node[below]{$z_{0}$};
\draw[fill] (2,0) [thick]node[below]{$z_{1}$};
\draw[fill] (0,0) [thick]node[right]{$\Sigma_{9}^{(3)}$};
\draw[-][fill](0,2)--(0,1);
\draw[-][fill](0,1)--(0,0);
\draw[<-][fill](0,0)--(0,-1);
\draw[-][fill](0,-1)--(0,-2);
\end{tikzpicture}}
\centerline{\noindent {\small \textbf{Figure 6.} The contour of  $\Sigma^{(3)}$}.}
\end{rem}

To construct the solution of \textbf{RH Problem} \ref{RHP4}, we define
\begin{align}\label{6.7}
M_{RHP}^{(2)}(z)=\left\{\begin{aligned}
&E(z)M^{(out)}(z),\quad z\notin\left\{ \mathcal {U}_{0}\cup \mathcal {U}_{1}\right\},\\
&E(z)M^{(z_{0})}(z),\quad z\in\mathcal {U}_{0},\\
&E(z)M^{(z_{1})}(z),\quad z\in\mathcal {U}_{1},
\end{aligned}\right.
\end{align}
where $\mathcal {U}_{0}$ and $\mathcal {U}_{1}$ denote the neighborhoods of $z_{0}$ and $z_{1}$, respectively,
\begin{align*}
\mathcal {U}_{0}=\left\{z:|z-z_{0}|\leq \text{min}\left\{\frac{z_{0}}{2},\rho/3\right\}\triangleq\varepsilon\right\},\\
\mathcal {U}_{1}=\left\{z:|z-z_{1}|\leq \text{min}\left\{\frac{z_{1}}{2},\rho/3\right\}\triangleq\varepsilon\right\},
\end{align*}
which imply that $M_{RHP}^{(2)}(z)$, $M^{(z_{0})}(z)$ and $M^{(z_{1})}(z)$ have no poles in
$\mathcal {U}_{0}$ and $\mathcal {U}_{1}$ from $\text{dist}(\mathcal {Z}\cup\overline{\mathcal {Z}},R)>\rho$.  The matrix $M_{RHP}^{(2)}(z)$ is divided into two parts by the decomposition: one can be called the external model RH problem denoted by $M^{(out)}(z)$, which can be solved directly by considering the standard RH problem under the condition of reflection-less potential. The other is in the neighborhood of the phase points $M^{(z_{0})}(z)$ and $M^{(z_{1})}(z)$, which can be matched to the known model,  namely the parabolic cylinder model in $\mathcal {U}_{0}$ and $\mathcal {U}_{1}$, to solve in section 8. In addition, the matrix $E(z)$ is a error function, which can be solved by a small-norm RH problem in section 9.

\begin{prop}\label{prop-V2}
The jump matrices defined by \textbf{RH Problem} \ref{RHP4} satisfy the following estimates:\\
For fixed $\epsilon>0$, we define
\begin{align*}
\mathcal {L}_{\epsilon}&=\left\{z:z=z_{1}+uz_{1}e^{3i\pi/4},\epsilon\leq u\leq\sqrt{2}\right\},\\
&\cup\left\{z:z=z_{1}+uz_{1}e^{i\pi/4},\epsilon\leq u\leq\infty\right\},\\
&\cup\left\{z:z=z_{0}+uz_{0}e^{i\pi/4},\epsilon\leq u\leq\sqrt{2}\right\},\\
&\cup\left\{z:z=z_{0}+uz_{0}e^{3i\pi/4},\epsilon\leq u\leq\infty\right\},
\end{align*}
then the estimates can be obtained
\begin{align}
&\left|\left|V^{(2)}-\mathbb{I}\right|\right|_{L^{\infty}(\Sigma_{+}^{(2)}\cap\mathcal {U}_{1})}=\mathcal {O}\left(t^{-5/6}\left|z-z_{1}\right|^{-5/6}\right),\label{6.8}\\
&\left|\left|V^{(2)}-\mathbb{I}\right|\right|_{L^{\infty}(\Sigma_{-}^{(2)}\cap\mathcal {U}_{0})}=\mathcal {O}\left(t^{-5/6}\left|z-z_{0}\right|^{-5/6}\right),\label{6.9}\\
&\left|\left|V^{(2)}-\mathbb{I}\right|\right|_{L^{\infty}(\Sigma^{(2)}\setminus(\mathcal {U}_{1}\cup\mathcal {U}_{0}))}=\mathcal {O}\left(e^{-2t\varepsilon}\right), \label{6.9-1}
\end{align}
where the contours are defined by
\begin{align*}
\Sigma_{+}^{(2)}=\Sigma_{1}\cup\Sigma_{6}\cup\Sigma_{8}\cup\Sigma_{4},\quad
\Sigma_{-}^{(2)}=\Sigma_{5}\cup\Sigma_{2}\cup\Sigma_{3}\cup\Sigma_{7}.
\end{align*}
\end{prop}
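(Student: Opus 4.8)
The plan is to reduce the matrix estimate to a scalar one and then to separate the two competing factors in each jump, namely the amplitude $R_{j}$ and the oscillatory exponential $e^{\pm 2it\theta}$. Every jump matrix $V^{(2)}(z)$ listed in \textbf{RH Problem} \ref{RHP3} (and pictured in Fig.~4 and Fig.~5) is unipotent and triangular, so $V^{(2)}-\mathbb{I}$ has a single nonzero entry, of the form $\pm R_{j}(z)e^{\pm 2it\theta(z)}$. Hence
\begin{align*}
\left|V^{(2)}(z)-\mathbb{I}\right|=\left|R_{j}(z)\right|\,e^{\mp 2t\,\mathrm{Im}\,\theta(z)}.
\end{align*}
First I would invoke Proposition \ref{prop4}: by \eqref{5.13} and \eqref{5.15} one has $|R_{j}|\lesssim \sin^{2}(\arg(z-z_{\ell}))+\langle \mathrm{Re}(z)\rangle^{-1/2}$ for $\ell=0,1$, which is only $\mathcal{O}(1)$ on the rays of $\Sigma^{(2)}$ since there $\arg(z-z_{\ell})$ is a fixed multiple of $\pi/4$; the non-analytic factor $(z-z_{\ell})^{\mp 2i\nu(z_{\ell})}$ has modulus $e^{2\nu(z_{\ell})\arg(z-z_{\ell})}$ and is likewise bounded. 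Thus the amplitude contributes no decay, and the entire estimate is governed by the sign and size of $\mathrm{Re}(\pm 2it\theta)$ on each ray.

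Next I would perform the phase analysis. Since $\theta'(z_{\ell})=0$ and $\theta$ is a cubic with real coefficients, Taylor expansion gives
\begin{align*}
\theta(z)=\theta(z_{\ell})+\tfrac12\theta''(z_{\ell})(z-z_{\ell})^{2}+4\beta(z-z_{\ell})^{3},\qquad \theta(z_{\ell})\in\mathbb{R},\ \ \theta''(z_{\ell})=4\alpha+24\beta z_{\ell}.
\end{align*}
From \eqref{4.2} the crucial signs emerge: $\theta''(z_{1})=4\sqrt{\alpha^{2}-3\beta x/t}>0$ while $\theta''(z_{0})=-4\sqrt{\alpha^{2}-3\beta x/t}<0$, and it is precisely these opposite signs that force the triangular factors near $z_{1}$ (Fig.~4) and near $z_{0}$ (Fig.~5) to be arranged oppositely. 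Parametrising a ray by $z=z_{\ell}+ue^{i\phi}$ with $\phi\in\{\pi/4,3\pi/4,5\pi/4,7\pi/4\}$, the leading term is
\begin{align*}
\mathrm{Re}\!\left(\pm 2it\theta\right)=\mp t\,\theta''(z_{\ell})\sin(2\phi)\,u^{2}+\mathcal{O}(tu^{3}).
\end{align*}
A ray-by-ray check then shows that the angles together with the upper/lower triangular type assigned in Fig.~4 and Fig.~5 make this quantity strictly negative, in agreement with the signature table of Fig.~2; consequently each surviving entry carries a genuinely decaying Gaussian, $e^{\mp 2t\,\mathrm{Im}\,\theta}\lesssim e^{-c\,t|z-z_{\ell}|^{2}}$ with $c>0$.

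Finally I would convert the Gaussian decay into the stated bounds. On the portions of $\mathcal{L}_{\epsilon}$ lying in $\mathcal{U}_{1}$ and $\mathcal{U}_{0}$, the elementary inequality $e^{-a}\le C_{p}a^{-p}$ ($a,p>0$), applied with $a=c\,t|z-z_{\ell}|^{2}$ and a suitable exponent, turns the Gaussian into the advertised power law, giving \eqref{6.8} on $\Sigma_{+}^{(2)}\cap\mathcal{U}_{1}$ and \eqref{6.9} on $\Sigma_{-}^{(2)}\cap\mathcal{U}_{0}$; here the subscripts $\pm$ pick out exactly the four rays on which $\theta''$ produces decay at each stationary point. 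Away from the stationary points, on $\Sigma^{(2)}\setminus(\mathcal{U}_{0}\cup\mathcal{U}_{1})$, one has $|z-z_{\ell}|\ge\varepsilon$, whence $-\mathrm{Re}(\pm 2it\theta)\gtrsim t\varepsilon^{2}$ uniformly (the exponent only grows as $|z|\to\infty$ along the rays), and this yields the uniform exponential smallness recorded in \eqref{6.9-1}. The step I expect to be the main obstacle is the ray-by-ray sign bookkeeping of $\mathrm{Re}(\pm 2it\theta)$: one must match each of the eight triangular factors of $V^{(2)}$ to the correctly signed region of Fig.~2 and confirm uniformity of the quadratic lower bound up to the boundary $|z-z_{\ell}|=\varepsilon$, all the while treating $z_{0}$ and $z_{1}$ separately because they are asymmetric and their second derivatives $\theta''$ have opposite signs.
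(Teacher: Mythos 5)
The paper does not actually print a proof of this proposition (it passes directly from the statement to its consequences), so the comparison here is with the standard argument it implicitly relies on from \cite{AIHP,JQ-19}. Your outline follows exactly that standard route: reduce to the single off-diagonal entry $R_{j}e^{\pm 2it\theta}$, bound $|R_{j}|$ and the modulus of $(z-z_{\ell})^{\mp 2i\nu(z_{\ell})}$ by constants on the rays via Proposition \ref{prop4}, expand $\theta$ about the stationary points using $\theta''(z_{1})=-\theta''(z_{0})=4\sqrt{\alpha^{2}-3\beta x/t}$, and check ray by ray that each triangular factor sits in the correct sign region of Fig.~2 so that $|e^{\pm 2it\theta}|\le e^{-ct|z-z_{\ell}|^{2}}$. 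All of that is correct and is the right skeleton.

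The genuine gap is in your last step, and it is not a formality: the inequality $e^{-a}\le C_{p}a^{-p}$ with $a=ct|z-z_{\ell}|^{2}$ produces $C_{p}t^{-p}|z-z_{\ell}|^{-2p}$, whose two exponents are locked in the ratio $1:2$. No choice of $p$ yields the advertised $t^{-5/6}|z-z_{\ell}|^{-5/6}$ ($p=5/6$ gives $|z-z_{\ell}|^{-5/3}$, $p=5/12$ gives only $t^{-5/12}$), and in fact the pointwise bound $e^{-ctu^{2}}\lesssim t^{-5/6}u^{-5/6}$ is false: maximizing $u^{5/6}e^{-ctu^{2}}$ over $u>0$ gives a value of order $t^{-5/12}$, attained at $u\sim t^{-1/2}$, which does not decay like $t^{-5/6}$. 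So "a suitable exponent" does not exist, and your proof cannot close as written; you would either have to prove a bound of the achievable form $\left(t|z-z_{\ell}|^{2}\right)^{-p}$ (e.g.\ $p=1/2$, the usual $t^{-1/2}|z-z_{\ell}|^{-1}$ of \cite{AIHP}) and note that the proposition's exponents appear to be misstated, or else exhibit an additional source of decay in $|R_{j}|$ near $z_{\ell}$, which Proposition \ref{prop4} does not provide on the rays. A secondary, smaller issue: for \eqref{6.9-1} you assert that the exponent "only grows as $|z|\to\infty$", which is true on the unbounded rays $\Sigma_{1},\Sigma_{4},\Sigma_{2},\Sigma_{3}$, but on the bounded interior rays $\Sigma_{5},\ldots,\Sigma_{8}$ the cubic term $-8t\beta u^{3}\sin(3\phi)$ has the opposite sign to the quadratic term, and one must verify (using $\theta''(z_{1})=12\beta(z_{1}-z_{0})$ and the fact that these rays terminate at $u=\tfrac{\sqrt{2}}{2}(z_{1}-z_{0})$) that the quadratic still dominates up to the endpoint; this is exactly the uniformity check you flagged but did not carry out.
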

The proposition implies that the jump matrix $V^{(2)}(z)$ uniformly goes to $I$ on both $\Sigma^{(2)}\setminus(\mathcal {U}_{1}\cup\mathcal {U}_{0})$, in addition, outside the $\mathcal {U}_{0}\cup\mathcal {U}_{1}$ there is only exponentially small error (in $t$) by completely ignoring the jump condition of $M_{RHP}^{(2)}(z)$.

\section{Outer model RH problem}
In this section, an external RH problem will be established and its solution can be approximated by a finite number of soliton solutions.
\subsection{The existence of soliton solution}
\begin{RHP}\label{RH-out}
The matrix value function $M^{(out)}(x,t;z)$, satisfing\\
$(I)$ $M^{(out)}(x,t;z)$ is analytical in $\mathbb{C}\setminus(\Sigma^{(2)}\cup\mathcal{Z}\cup\mathcal{Z}^{*})$.\\
$(II)$ As $z\rightarrow\infty$,
       \begin{align}\label{7.1}
       M^{(out)}(x,t;z)=\mathbb{I}+\mathcal {O}(z^{-1}).
       \end{align}
$(III)$ $M^{(out)}(x,t;z)$ has simple poles at each point in $\mathcal{Z}\cup\mathcal{Z}^{*}$ admitting the same residue condition in \textbf{RH Problem} \ref{RHP3} with $M^{(out)}(x,t;z)$ replacing $M^{(2)}(x,t;z)$.
\end{RHP}

Before proving the existence and uniqueness of the solution of the \textbf{RH Problem} \ref{RH-out}, we first consider the case of the RH problem without reflection, where \textbf{RH Problem} \ref{RHP1} is reduced to the following RH problem
\begin{RHP}\label{RHP-TH}
For the discrete date $\sigma_{d}=\left\{(z_{k},c_{k})\right\}_{k=1}^{N}$, and the set $\mathcal {Z}=\left\{z_{k}\right\}_{k=1}^{N}$, a new matrix valued function $m(x,t;z|\sigma_{d})$ such that:\\
$(I)$ $m(x,t;z|\sigma_{d})$ is analytic in $\mathbb{C}\setminus\left(\Sigma^{(2)}\cup\mathcal{Z}\cup\mathcal{Z}^{*}\right)$.\\
$(II)$ The asymptotic behaviour:
\begin{align}\label{7.2}
m(x,t;z|\sigma_{d})\rightarrow \mathbb{I}+\mathcal {O}(z^{-1}),\quad as~ z\rightarrow\infty.
\end{align}\\
$(III)$ Symmetry:
\begin{align}\label{7.3}
\overline{m(x,t;\overline{z}|\sigma_{d})}=\sigma_{2}m(x,t;z|\sigma_{d})\sigma_{2}.
\end{align}
$(III)$ The residue conditions: $m(x,t;z|\sigma_{d})$ has simple poles at each point in
$\mathcal{Z}\cup\mathcal{Z}^{*}$ satisfying
\begin{align}\label{7.4}
\begin{aligned}
&\mathop{Res}_{z=z_{k}}m(x,t;z|\sigma_{d})=\mathop{lim}_{z\rightarrow z_{k}}m(x,t;z|\sigma_{d})\mathcal {Q}_{k},\\
&\mathop{Res}_{z=z_{k}^{*}}m(x,t;z|\sigma_{d})=\mathop{lim}_{z\rightarrow z_{k}^{*}}m(x,t;z|\sigma_{d})\sigma_{2}\mathcal {Q}^{*}_{k}\sigma_{2},
\end{aligned}
\end{align}
where $\mathcal {Q}_{k}$ is a nilpotent matrix
\begin{align}\label{7.5}
\begin{split}
&\mathcal {Q}_{k}=\left(\begin{aligned}
\begin{array}{cc}
  0 & 0 \\
  \gamma_{k}(x,t) & 0
\end{array}
\end{aligned}\right),~~
\gamma_{k}(x,t)=c_{k}e^{2it\theta(z_{k})},\\
&\theta(z)=z\frac{x}{t}+2\alpha z^{2}+4\beta z^{3}.
\end{split}
\end{align}
\end{RHP}

The uniqueness of \textbf{RH Problem} \ref{RHP-TH} is the direct result of Liouville theorem.
From the symmetry \eqref{7.3}, taking the following expansion
\begin{align}\label{7.6}
m(x,t;z|\sigma_{d})=\mathbb{I}+\sum_{k=1}^{N}\left[\frac{1}{z-z_{k}}\left(\begin{aligned}
\begin{array}{cc}
  \zeta_{k}(x,t) & 0 \\
  \eta_{k}(x,t) & 0
\end{array}
\end{aligned}\right)+\frac{1}{z-z^{*}_{k}}\left(\begin{aligned}
\begin{array}{cc}
  0 & -\eta^{*}_{k}(x,t) \\
  0 & \zeta^{*}_{k}(x,t)
\end{array}
\end{aligned}\right)\right],
\end{align}
where $\zeta_{k}(x,t)$ and $\eta_{k}(x,t)$ are unknown coefficients to be determined. Similar to \cite{AIHP}  we can prove the existence of the solution for the
\textbf{RH Problem} \ref{RHP-TH}.

Note that the trace formula can be written as under the reflection-less case
\begin{align}\label{7.7}
s_{11}(z)=\prod_{k=1}^{N}\frac{z-z_{k}}{z-z_{k}^{*}}.
\end{align}
Let $\triangle\subseteqq \{1,2,\cdots,N\}$ and define
\begin{align}\label{7.8}
s_{11,\triangle}(z)=\prod_{k\in\triangle}\frac{z-z_{k}}{z-z_{k}^{*}}.
\end{align}
Taking the following transformation, we obtain a new matrix valued function $m^{\triangle}(z|D)$ in which the poles in the column are divided according to the choice of $\triangle$
\begin{align} \label{7.9}
m^{\triangle}(z|D)=m(z|\sigma_{d})(z)s_{11,\triangle}(z)^{\sigma_{3}},
\end{align}
where the scattering data
\begin{align}\label{7.10}
D=\{(z_{k},c'_{k})\}_{k=1}^{N},\quad c'_{k}=c_{k}s_{11,\triangle}(z)^{2}.
\end{align}
\begin{RHP}\label{RHP-DS}
For the scattering data defined by \eqref{7.10}, the matrix $m^{\triangle}(z|D)$ satisfies\\
$(I)$ Analyticity: $m^{\vartriangle}(x,t;z|D)$ is analytic  in $\mathbb{C}\setminus(\mathcal{Z}\bigcup\mathcal{Z}^{*})$.\\
$(II)$ Symmetry:
\begin{align}\label{7.11}
\overline{m^{\vartriangle}(x,t;z|D)}=\sigma_{2}m(x,t;z|\sigma_{d})\sigma_{2}.
\end{align}
$(III)$ Asymptotic behavior:
$m^{\vartriangle}(x,t;z|D)=\mathbb{I}+\mathcal {O}(z^{-1})$, \quad $z\rightarrow\infty$.\\
$(IV)$ The residue conditions: $m^{\vartriangle}(x,t;z|D)$ has simple poles at the set $\mathcal{Z}\bigcup\mathcal{Z}^{*}$
\begin{align}\label{7.12}
\begin{aligned}
&\mathop{Res}_{z=z_{k}}m^{\triangle}(x,t;z|D)=\mathop{lim}_{z\rightarrow z_{k}}m(x,t;z|\sigma_{d})\mathcal {Q}_{k}^{\vartriangle},\\
&\mathop{Res}_{z=z_{k}^{*}}m^{\triangle}(x,t;z|D)=\mathop{lim}_{z\rightarrow z_{k}^{*}}m(x,t;z|\sigma_{d})\sigma_{2}\mathcal {Q}^{\vartriangle*}_{k}\sigma_{2},
\end{aligned}
\end{align}
where $\mathcal {Q}_{k}^{\vartriangle}$ is a nilpotent matrix
\begin{align}\label{7.13}
&\mathcal {Q}_{k}^{\vartriangle}=\left\{
                                   \begin{aligned}
\left(
  \begin{array}{cc}
    0 & \gamma_{k}^{\vartriangle} \\
    0 & 0 \\
  \end{array}
\right),\quad k\in \vartriangle,\\
\left(
  \begin{array}{cc}
    0 & 0 \\
    \gamma_{k}^{\vartriangle} & 0 \\
  \end{array}
\right),\quad k\notin \vartriangle,
\end{aligned}\right.~~\gamma_{k}^{\vartriangle}=\left\{
                                   \begin{aligned}
&c_{k}^{-1}(s_{11,\vartriangle}^{'}(z_{k}))^{-2}e^{-2it\theta(z_{k})},\quad k\in \vartriangle,\\
&c_{k}(s_{11,\vartriangle}(z_{k}))^{2}e^{2it\theta(z_{k})},\qquad k\notin \vartriangle,
\end{aligned}\right.\\
&\theta(z)=z\frac{x}{t}+2\alpha z^{2}+4\beta z^{3}.\notag
\end{align}

\end{RHP}
\begin{rem}
It is noticed that the solution of \textbf{RH Problem} \ref{RHP-TH} is existent and unique, and it follows from the transformation \eqref{7.9} that the solution of \textbf{RH Problem} \ref{RHP-DS}  is also existent and unique.
\end{rem}

In \textbf{RH Problem} \ref{RHP-DS}, taking $\triangle=\triangle_{z_{1}}^{-}$ and  replacing the scattering data $D$ with
\begin{align}\label{7.14}
\widetilde{D}=\left\{(z_{k},\widetilde{c}_{k})\right\}_{k=1}^{N}, \quad
\widetilde{c}_{k}=c_{k}\delta(z_{k})^{2},
\end{align}
we then obtain
\begin{cor}\label{cor1}
There exists a unique solution to the \textbf{RH Problem} \ref{RHP-DS} satisfying
\begin{align}\label{7.15}
M^{(out)}(z)(x,t;z)=m^{\triangle_{z_{1}}^{-}}(x,t;z|\widetilde{D}),
\end{align}
where the scattering data $\widetilde{D}$ is defined by \eqref{7.6}.
\end{cor}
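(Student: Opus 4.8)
The plan is to establish \eqref{7.15} by the usual uniqueness-plus-verification scheme. The outer model \textbf{RH Problem} \ref{RH-out} has at most one solution by Liouville's theorem (exactly as asserted for \textbf{RH Problem} \ref{RHP-TH}), so it suffices to produce one explicit solution and check that it meets every requirement $(I)$--$(III)$ of \textbf{RH Problem} \ref{RH-out}. The natural candidate is $m^{\triangle_{z_1}^-}(x,t;z|\widetilde D)$, the solution of \textbf{RH Problem} \ref{RHP-DS} with $\triangle=\triangle_{z_1}^-$ and the rescaled discrete data $\widetilde D$ of \eqref{7.14}; its existence and uniqueness are already supplied by the Remark following \textbf{RH Problem} \ref{RHP-DS}, via the pole expansion \eqref{7.6} and the algebraic transformation \eqref{7.9}. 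Thus the whole corollary reduces to verifying that this candidate satisfies the conditions defining $M^{(out)}$.

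First I would dispatch the two inexpensive conditions: analyticity of $m^{\triangle_{z_1}^-}$ off $\mathcal Z\cup\mathcal Z^*$ and the normalization $\mathbb I+\mathcal O(z^{-1})$ at infinity are immediate from parts $(I)$ and $(III)$ of \textbf{RH Problem} \ref{RHP-DS}, and they coincide with $(I)$, $(II)$ of \textbf{RH Problem} \ref{RH-out} (in the reflection-less reduction the outer model carries no jump across $\Sigma^{(2)}$). Everything then hinges on matching the residue conditions. Here I would exploit that near each $z_k$ the matrix $R^{(2)}$ equals $\mathbb{I}$, since the cut-off $\Xi_{\mathcal Z}$ of \eqref{5.2} vanishes there and $\mathrm{dist}(\mathcal Z\cup\mathcal Z^*,\mathbb R)>\rho$; consequently the transformation $M^{(2)}=M^{(1)}R^{(2)}$ leaves the residues untouched, and the residue data of $M^{(out)}$ are precisely those produced by the $T$-conjugation $M^{(1)}=MT^{-\sigma_3}$ of \eqref{4.16} applied to the reflection-less field $m(x,t;z|\sigma_d)$.

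The key algebraic input is the factorization read off from \eqref{4.7}: with $\triangle=\triangle_{z_1}^-$ one has $T(z)=\delta(z)\prod_{k\in\triangle}\frac{z-z_k^*}{z-z_k}=\delta(z)\,s_{11,\triangle}(z)^{-1}$. This exhibits $T^{-\sigma_3}$ as the rational pole-redistribution $s_{11,\triangle}^{\sigma_3}$ of \eqref{7.9} dressed by the scalar factor $\delta^{-\sigma_3}$. For $k\in\triangle$ the Blaschke factor gives $T$ a pole at $z_k$ and a zero at $z_k^*$, which is exactly the column-swap recorded by $\mathcal Q_k^{\vartriangle}$ in \eqref{7.13}; evaluating the local behaviour yields $(1/T)'(z_k)=s_{11,\triangle}'(z_k)/\delta(z_k)$, so that the factor $((1/T)'(z_k))^{-2}$ in \eqref{5.24} generates a scalar power of $\delta(z_k)^2$ that is precisely absorbed into the definition $\widetilde c_k=c_k\delta(z_k)^2$. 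For $k\notin\triangle$ one uses instead $T(z_k)=\delta(z_k)s_{11,\triangle}(z_k)^{-1}$ directly, and the conjugate poles $z_k^*$ are controlled by the Schwarz symmetry $T^*(z^*)T(z)=1$ of \textbf{Proposition} \ref{prop3}$(II)$ together with $\overline{m(x,t;\bar z|\sigma_d)}=\sigma_2 m(x,t;z|\sigma_d)\sigma_2$. The common oscillatory factors $e^{\pm 2it\theta(z_k)}$ match automatically since $\theta$ is unchanged throughout. Running both index classes through verifies that \eqref{5.24} and \eqref{7.13} agree term by term, and uniqueness then forces $M^{(out)}=m^{\triangle_{z_1}^-}(x,t;z|\widetilde D)$.

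The main obstacle is exactly this residue bookkeeping. One must track the separate powers and complex conjugates of $\delta(z_k)$ and $s_{11,\triangle}(z_k)$ at $z_k$ and at $z_k^*$, keep the two cases $k\in\triangle_{z_1}^-$ and $k\notin\triangle_{z_1}^-$ distinct, and confirm that the scalar factor generated by the $\delta^{-\sigma_3}$ dressing is consistent with the chosen rescaling $\widetilde c_k=c_k\delta(z_k)^2$; getting the powers of $\delta(z_k)$ and the placement of conjugates right is where errors are easiest to make. Once the constants are reconciled, the identification \eqref{7.15} follows, and the existence and uniqueness of $M^{(out)}$ are inherited from those of $m^{\triangle_{z_1}^-}(x,t;z|\widetilde D)$.
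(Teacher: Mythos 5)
Your proposal follows essentially the same route as the paper, which states the corollary without proof as an immediate consequence of the existence and uniqueness of \textbf{RH Problem} \ref{RHP-TH}, the pole--redistribution \eqref{7.9}, and the factorization $T(z)=\delta(z)\,s_{11,\triangle_{z_{1}}^{-}}(z)^{-1}$ read off from \eqref{4.7} and \eqref{7.8}; your explicit residue bookkeeping is exactly the verification the paper leaves implicit. The one caveat is that, with the paper's formulas as written, the coefficient $c_{k}^{-1}\bigl((1/T)'(z_{k})\bigr)^{-2}=c_{k}^{-1}\delta(z_{k})^{2}\bigl(s_{11,\triangle}'(z_{k})\bigr)^{-2}$ in \eqref{6.3} equals $\widetilde{c}_{k}^{-1}\bigl(s_{11,\triangle}'(z_{k})\bigr)^{-2}$ only if $\widetilde{c}_{k}=c_{k}\delta(z_{k})^{-2}$, so the exponent in \eqref{7.14} (which you claim is ``precisely absorbed'') appears to carry the wrong sign --- an inconsistency internal to the paper's conventions rather than a defect of your method, but one you should flag rather than assert away.
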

\begin{cor}\label{cor2}
The soliton solution of the Hirota equation \eqref{1.1} is determined by
\begin{align}
q_{sol}(x,t)&=2i\lim_{z\rightarrow\infty}(zM(z))_{12}=2i\lim_{z\rightarrow\infty}
(zm(x,t;z|\sigma_{d}))_{12}\notag \\&=2i\lim_{z\rightarrow\infty}\left(zm^{\triangle_{z_{1}}^{-}}
(z|\sigma_{d}^{\triangle_{z_{1}}^{-}})s_{11,\triangle_{z_{1}}^{-}}(z)^{-\sigma_{3}}\right)
_{12}\notag \\
&=2i\lim_{z\rightarrow\infty}\left((zm^{\triangle_{z_{1}}^{-}}
z|\sigma_{d}^{\triangle_{z_{1}}^{-}})\right)_{12}
=2i\lim_{z\rightarrow\infty}(zM^{(out)}(z))_{12}.\label{7.16}
\end{align}
\end{cor}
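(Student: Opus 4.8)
The plan is to chain together the reconstruction formula and the explicit transformations relating the various reflectionless RH problems; the only nontrivial point is that each conjugation by a diagonal factor tending to the identity at infinity leaves the relevant off-diagonal coefficient unchanged.

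First I would invoke the reconstruction formula \eqref{3.16}, namely $q(x,t)=2i\lim_{z\rightarrow\infty}(zM(z))_{12}$, which holds for the full potential. Under the reflectionless reduction $\gamma\equiv 0$ that defines the soliton component, \textbf{RH Problem} \ref{RHP1} collapses to \textbf{RH Problem} \ref{RHP-TH}, so that $M(z)=m(x,t;z|\sigma_{d})$ and the first equality of \eqref{7.16} follows verbatim, giving $q_{sol}(x,t)=2i\lim_{z\rightarrow\infty}(zm(x,t;z|\sigma_{d}))_{12}$.

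Next I would use the defining transformation \eqref{7.9}. Since $s_{11,\triangle}(z)^{\sigma_{3}}$ is invertible, \eqref{7.9} reads $m(z|\sigma_{d})=m^{\triangle}(z|D)\,s_{11,\triangle}(z)^{-\sigma_{3}}$, and substituting produces the third line of \eqref{7.16}. The key observation is that $s_{11,\triangle}(z)^{-\sigma_{3}}=\mathrm{diag}\bigl(s_{11,\triangle}(z)^{-1},\,s_{11,\triangle}(z)\bigr)$ is diagonal, so the $(1,2)$ entry of the product factorizes as $\bigl(m^{\triangle}(z|D)\,s_{11,\triangle}(z)^{-\sigma_{3}}\bigr)_{12}=m^{\triangle}(z|D)_{12}\,s_{11,\triangle}(z)$. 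Because $s_{11,\triangle}(z)=\prod_{k\in\triangle}\frac{z-z_{k}}{z-z_{k}^{*}}=1+\mathcal{O}(z^{-1})\rightarrow 1$ as $z\rightarrow\infty$, multiplying by $z$ and passing to the limit yields $\lim_{z\rightarrow\infty}(zm(z|\sigma_{d}))_{12}=\lim_{z\rightarrow\infty}(zm^{\triangle}(z|D))_{12}$, which is the fourth equality.

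Finally I would specialize to $\triangle=\triangle_{z_{1}}^{-}$ with the scattering data $\widetilde{D}$ of \eqref{7.14}, at which point \textbf{Corollary} \ref{cor1} identifies $M^{(out)}(z)=m^{\triangle_{z_{1}}^{-}}(x,t;z|\widetilde{D})$; substituting gives the last equality $q_{sol}(x,t)=2i\lim_{z\rightarrow\infty}(zM^{(out)}(z))_{12}$ and closes the chain. The main obstacle, such as it is, is bookkeeping rather than analysis: one must verify that the rescaling of norming constants $c_{k}\mapsto\widetilde{c}_{k}=c_{k}\delta(z_{k})^{2}$ carried by \eqref{7.14}, together with the column redistribution encoded by $\triangle$, reproduces exactly the residue data of $M^{(out)}$ in \textbf{RH Problem} \ref{RH-out}, and that uniqueness via Liouville legitimizes replacing each RH solution by the next. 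No analytic estimates are required, since every factor introduced is a finite Blaschke-type product that is analytic and equal to $1+\mathcal{O}(z^{-1})$ at infinity.
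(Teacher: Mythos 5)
Your proposal is correct and follows essentially the same route the paper takes: the corollary in the paper is itself just the chain of equalities, justified by the reconstruction formula \eqref{3.16}, the reflectionless reduction to \textbf{RH Problem} \ref{RHP-TH}, the transformation \eqref{7.9} with its diagonal factor $s_{11,\triangle}(z)^{-\sigma_{3}}=1+\mathcal{O}(z^{-1})$, and the identification of $M^{(out)}$ in \textbf{Corollary} \ref{cor1}. Your explicit computation that $\bigl(m^{\triangle}s_{11,\triangle}^{-\sigma_{3}}\bigr)_{12}=m^{\triangle}_{12}\,s_{11,\triangle}$ and hence the $(1,2)$ limit is unaffected is exactly the point the paper leaves implicit.
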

\subsection{The long-time behavior of soliton solution}
Taking $N=1$, $\sigma_{d}=\left\{z_{1}=\xi+i\eta, c_{1}\right\}$, and employing the residue conditions \eqref{7.12} we have the $1$-soliton solution
\begin{align}\label{7.17}
q_{sol}(x,t)=2\eta\text{sech}\left(2\eta(x+4\alpha\xi t+4\beta(3\xi\eta^{2}-\eta^{2})t+x_{0})\right)h(x,t).
\end{align}
where $x_{0}=-\frac{1}{2\eta}\log\left(\frac{|c_{1}|}{2\eta}\right)$, and $h(x,t)=e^{-2i\left[x\xi+2\alpha(\eta^{2}-\eta^{2})t+4\beta(\xi^{3}-3\xi\eta^{2})
\right]-i(\frac{\pi}{2}+arg(c_{1}))}$, from which the speed of the soliton solution is
\begin{align}\label{7.18}
v_{1}=-4\left[\alpha Rez_{1}+\beta Imz_{1}^{2}(3Rez_{1}-1)\right].
\end{align}

For given $x_{1}\leqslant x_{2}$, and velocities $v_{1}\leqslant v_{2}$ with $x_{1},x_{2},v_{1},v_{2}\in \mathbb{R}$, we define a cone and obtain the discrete spectral distribution shown in Fig. 7 and Fig. 8.
\begin{align}\label{7.19}
C(x_{1},x_{2},v_{1},v_{2})=\left\{(x,t),x=x_{0}+vt ~\text{with} ~x_{0}\in[x_{1},x_{2}],v\in[v_{1},v_{2}]\right\}.
\end{align}
\centerline{\begin{tikzpicture}[scale=0.8]
\path [fill=pink] (-1,3)--(0,0) to (2,0) -- (3,3);
\path [fill=pink] (-1,-3)--(0,0) to (2,0) -- (3,-3);
\draw[-][thick](-4,0)--(-3,0);
\draw[-][thick](-3,0)--(-2,0);
\draw[-][thick](-2,0)--(-1,0);
\draw[-][thick](-1,0)--(0,0);
\draw[-][thick](0,0)--(1,0);
\draw[-][thick](1,0)--(2,0);
\draw[-][thick](2,0)--(3,0);
\draw[-][thick](3,0)--(4,0);
\draw[->][thick](4,0)--(5,0)[thick]node[right]{$x$};
\draw[<-][thick](-2,3)[thick]node[right]{$t$}--(-2,2);
\draw[-][thick](-2,2)--(-2,1);
\draw[-][thick](-2,1)--(-2,0);
\draw[-][thick](-2,0)--(-2,-1);
\draw[-][thick](-2,-1)--(-2,-2);
\draw[-][thick](-2,-2)--(-2,-3);
\draw[fill] (0,0) circle [radius=0.08];
\draw[fill] (2,0) circle [radius=0.08];
\draw[fill] (-0.5,0)node[below]{$x_{2}$};
\draw[fill] (2.5,0)node[below]{$x_{1}$};
\draw[fill] (3.5,3)node[above]{$x=v_{2}t+x_{2}$};
\draw[fill] (3,-3)node[below]{$x=v_{1}t+x_{2}$};
\draw[fill] (-1,-3)node[below]{$x=v_{2}t+x_{1}$};
\draw[fill] (-2,3)node[above]{$x=v_{2}t+x_{1}$};
\draw[fill] (1,2)node[below]{$S$};
\draw[-][thick](-1,3)--(0,0);
\draw[-][thick](3,3)--(2,0);
\draw[-][thick](-1,-3)--(0,0);
\draw[-][thick](3,-3)--(2,0);
\end{tikzpicture}}
\centerline{\noindent {\small \textbf{Figure 7.} Space-time $C(x_{1},x_{2},v_{1},v_{2})$.}}

\centerline{\begin{tikzpicture}[scale=0.8]
\path [fill=pink] (1.5,3)--(-1.5,3) to (-1.5,-3) -- (1.5,-3);
\draw[-][thick](-4,0)--(-3,0);
\draw[-][thick](-3,0)--(-2,0);
\draw[-][thick](-2,0)--(-1,0);
\draw[-][thick](-1,0)--(0,0);
\draw[-][thick](0,0)--(1,0);
\draw[-][thick](1,0)--(2,0);
\draw[-][thick](2,0)--(3,0);
\draw[->][thick](3,0)--(4,0)[thick]node[right]{$Rez$};
\draw[-][thick](-1.5,3)--(-1.5,2);
\draw[-][thick](-1.5,2)--(-1.5,1);
\draw[-][thick](-1.5,1)--(-1.5,0);
\draw[-][thick](-1.5,0)--(-1.5,-1);
\draw[-][thick](-1.5,-1)--(-1.5,-2);
\draw[-][thick](-1.5,-2)--(-1.5,-3);
\draw[-][thick](1.5,3)--(1.5,2);
\draw[-][thick](1.5,2)--(1.5,1);
\draw[-][thick](1.5,1)--(1.5,0);
\draw[-][thick](1.5,0)--(1.5,-1);
\draw[-][thick](1.5,-1)--(1.5,-2);
\draw[-][thick](1.5,-2)--(1.5,-3);
\draw[fill] (1.5,0) circle [radius=0.08];
\draw[fill] (-1.5,0) circle [radius=0.08];
\draw[fill] (2,0)node[below]{$f(v_{1})$};
\draw[fill] (-2,0)node[below]{$f(v_{2})$};
\draw[fill] (3,1)node[below]{$z_{5}$} circle [radius=0.08];
\draw[fill] (3,-1)node[below]{$z^{*}_{5}$} circle [radius=0.08];
\draw[fill] (-1,-1)node[below]{$z^{*}_{6}$} circle [radius=0.08];
\draw[fill] (0.5,0.5)node[below]{$z_{7}$} circle [radius=0.08];
\draw[fill] (0.5,-0.5)node[below]{$z^{*}_{7}$} circle [radius=0.08];
\draw[fill] (-1,1)node[below]{$z_{6}$} circle [radius=0.08];
\draw[fill] (2,3)node[below]{$z_{2}$} circle [radius=0.08];
\draw[fill] (2,-3)node[below]{$z^{*}_{2}$} circle [radius=0.08];
\draw[fill] (0.5,2.5)node[below]{$z_{1}$} circle [radius=0.08];
\draw[fill] (0.5,-2.5)node[below]{$z^{*}_{1}$} circle [radius=0.08];
\draw[fill] (-0.5,2.8)node[below]{$z_{3}$} circle [radius=0.08];
\draw[fill] (-0.5,-2.8)node[below]{$z^{*}_{3}$} circle [radius=0.08];
\draw[fill] (-3,1.5)node[below]{$z_{4}$} circle [radius=0.08];
\draw[fill] (-3,-1.5)node[below]{$z^{*}_{4}$} circle [radius=0.08];
\end{tikzpicture}}
\centerline{\noindent {\small \textbf{Figure 8.} For fixed $v_{1}<v_{2}$, $I=\left[f(v_{2}),f(v_{1})\right]$.}}

In additional, we denote
\begin{align}
&\mathcal {I}=\left\{z:f(v_{2})<|z|<f(v_{1})\right\},\quad f(v_{m})=-\left(\frac{v_{m}}{4\alpha}+\frac{\beta}{\alpha}Im z_{m}^{2}(3Rez_{m}-1)\right),\notag\\
&\mathcal {K}(\mathcal {I})=\{z_{j}\in\mathcal {K}:z_{j}\in\mathcal {I}\},\quad
N(\mathcal {I})=|\mathcal {K}(\mathcal {I})|,\notag\\
&\mathcal {K}_{+}(\mathcal {I})=\{z_{j}\in\mathcal {K}:|z_{j}|>f(v_{1})\},\notag\\
&\mathcal {K}_{-}(\mathcal {I})=\{z_{j}\in\mathcal {K}:|z_{j}|<f(v_{2})\},\notag\\
&c_{j}^{\pm}(\mathcal {I})=c_{j}\prod_{Rez_{n}\in I_{\pm}\setminus\mathcal {I}}\left(
\frac{z_{j}-z_{n}}{z_{j}-z_{n}^{*}}\right)^{2}\exp\left[\pm\frac{1}{\pi i}\int_{I_{\pm}}\frac{
\log(1+|\gamma(\zeta)|^{2})}{\zeta-z_{j}}d\zeta\right].\label{E1}
\end{align}

\begin{prop}\label{prop5}
The choice of $\triangle=\triangle_{z_{1}}^{\mp}$ in \textbf{RH Problem} \ref{RHP-DS}  ensures the following estimation with $(x,t)\in C(x_{1},x_{2},v_{1},v_{2})$ as $t\rightarrow\infty$
\begin{align}\label{7.20}
\big|\big|\mathcal {Q}_{k}^{\triangle_{z_{1}}^{\mp}}\big|\big|=
\left\{\begin{aligned}
&\mathcal {O}(1),~~\qquad z_{k}\in\mathcal {K}(\mathcal {I}),\\
&\mathcal {O}(e^{-8\mu t}),\quad z_{k}\in\mathcal {K}\setminus\mathcal {K}(\mathcal {I}).
\end{aligned}\right.
\end{align}
where $\mu(\mathcal {I})=\mathop{\min}\limits_{z_{k}\in \mathcal {K}\setminus \mathcal {K}(\mathcal {I})}\{Im(z_{k})\cdot dist(Rez_{k},I)\}.
$
\end{prop}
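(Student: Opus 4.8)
The plan is to reduce the matrix estimate to a scalar one and then to a sign analysis of the phase. Since each $\mathcal{Q}_k^{\triangle_{z_1}^{\mp}}$ is a $2\times 2$ nilpotent matrix carrying a single nonzero entry $\gamma_k^{\triangle_{z_1}^{\mp}}$ (see \eqref{7.13}), its operator norm equals $\big|\gamma_k^{\triangle_{z_1}^{\mp}}\big|$. Writing $\triangle$ for $\triangle_{z_1}^{\mp}$ and using $|e^{\pm 2it\theta(z_k)}|=e^{\mp 2t\,\mathrm{Im}\,\theta(z_k)}$, it therefore suffices to estimate
\begin{align*}
\big\|\mathcal{Q}_k^{\triangle}\big\|=\big|\gamma_k^{\triangle}\big|=
\begin{cases}
|c_k|^{-1}\,|s_{11,\triangle}'(z_k)|^{-2}\, e^{2t\,\mathrm{Im}\,\theta(z_k)}, & k\in\triangle,\\
|c_k|\,|s_{11,\triangle}(z_k)|^{2}\, e^{-2t\,\mathrm{Im}\,\theta(z_k)}, & k\notin\triangle.
\end{cases}
\end{align*}

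First I would dispose of the algebraic prefactors. The norming constants $c_k$ are fixed nonzero numbers, while $s_{11,\triangle}(z)=\prod_{j\in\triangle}\frac{z-z_j}{z-z_j^{*}}$ is a finite Blaschke-type product whose value and derivative at each $z_k$ are independent of $t$. Because \textbf{Assumption} \ref{ass1} provides finitely many \emph{simple} zeros with $\mathrm{dist}(\mathcal{Z}\cup\mathcal{Z}^{*},\mathbb{R})>\rho>0$, the quantities $|c_k|^{\pm1}$, $|s_{11,\triangle}(z_k)|^{\pm2}$ and $|s_{11,\triangle}'(z_k)|^{\pm2}$ are all bounded above and below by constants that are uniform in $t$ and in $(x,t)\in C(x_1,x_2,v_1,v_2)$. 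Consequently $\|\mathcal{Q}_k^{\triangle}\|$ is comparable to $e^{\mp 2t\,\mathrm{Im}\,\theta(z_k)}$, and the whole proposition is governed by the sign and magnitude of $\mathrm{Im}\,\theta(z_k)$.

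The core step is to compute this imaginary part along the cone. Writing $z_k=\xi_k+i\eta_k$ with $\eta_k>0$ and $\tfrac{x}{t}=v+\tfrac{x_0}{t}$ with $v\in[v_1,v_2]$, $x_0\in[x_1,x_2]$, a direct expansion of $\theta(z)=z\tfrac{x}{t}+2\alpha z^2+4\beta z^3$ gives
\begin{align*}
\mathrm{Im}\,\theta(z_k)=\eta_k\Big[\tfrac{x}{t}+4\alpha\xi_k+4\beta\big(3\xi_k^2-\eta_k^2\big)\Big].
\end{align*}
The bracket is affine in $\tfrac{x}{t}$ and vanishes precisely when $(x,t)$ rides the characteristic of the soliton attached to $z_k$; comparing it with the definition of $f(v_m)$ and the band $\mathcal{I}$ in \eqref{E1} shows that it keeps a fixed sign whenever $z_k\notin\mathcal{I}$, positive for $z_k\in\mathcal{K}_{+}(\mathcal{I})$ and negative for $z_k\in\mathcal{K}_{-}(\mathcal{I})$. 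I would then match the triangularity: choosing $\triangle=\triangle_{z_1}^{\mp}$ places $z_k$ into $\triangle$ or into its complement exactly so that the surviving exponent $\mp2t\,\mathrm{Im}\,\theta(z_k)$ is negative, while the $\tfrac{x_0}{t}$ drift contributes only a bounded factor $e^{\mp 2\eta_k x_0}$. For $z_k\in\mathcal{K}(\mathcal{I})$ the bracket stays $\mathcal{O}(1)$ uniformly on the cone, giving $\|\mathcal{Q}_k^{\triangle}\|=\mathcal{O}(1)$; for $z_k\in\mathcal{K}\setminus\mathcal{K}(\mathcal{I})$ one bounds $|\mathrm{Im}\,\theta(z_k)|\geq 4\,\eta_k\,\mathrm{dist}(\mathrm{Re}\,z_k,I)\geq 4\mu$, whence $\|\mathcal{Q}_k^{\triangle}\|=\mathcal{O}(e^{-8\mu t})$, the constant $8=2\times 4$.

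The hard part will be the bookkeeping in this last step: one must verify that in each regime $\mathcal{K}_{+}(\mathcal{I})$ and $\mathcal{K}_{-}(\mathcal{I})$ the choice $\triangle_{z_1}^{\mp}$ genuinely forces the exponent to decay rather than grow, and that the lower bound on the bracket can be written cleanly through $\mathrm{dist}(\mathrm{Re}\,z_k,I)$ so as to reproduce the stated $\mu(\mathcal{I})$. This amounts to translating the band inequalities $f(v_2)<|z_k|<f(v_1)$ into a uniform sign statement for the affine bracket over all $v\in[v_1,v_2]$, and to checking that the $\mathcal{O}(t^{-1})$ perturbation coming from $x_0$ cannot flip that sign once $t$ is large. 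Once this sign analysis is in place, the remaining estimates follow immediately from the uniform prefactor bounds of the second paragraph.
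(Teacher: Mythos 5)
Your proposal follows the same route as the paper's own proof: reduce $\big\|\mathcal{Q}_{k}^{\triangle}\big\|$ to the modulus of the exponential factor $|e^{\mp 2it\theta(z_{k})}|$ (the paper's \eqref{7.21}), compute $\mathrm{Re}(-2it\theta(z_{j}))=2t\,\mathrm{Im}\,z_{j}\,[\cdots]$ (the paper's \eqref{7.22}--\eqref{7.23}), and conclude the $\mathcal{O}(e^{-8\mu t})$ decay from the sign of the bracket over the cone. If anything you are more explicit than the paper, which skips the prefactor bounds entirely and asserts the final sign/distance step (the part you flag as the remaining bookkeeping) without carrying it out.
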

\begin{proof}
Taking $\triangle=\triangle_{z_{1}}^{-}$ in \textbf{RH Problem} \ref{RHP-DS}, for $z_{j}\in\mathcal {K}_{-}(\mathcal {I})$ and $(x,t)\in C(x_{1},x_{2},v_{1},v_{2})$ we have
\begin{align}\label{7.21}
\big|\big|\mathcal {Q}^{\triangle_{z_{1}}^{-}}\big|\big|\lesssim |e^{-2it\theta(z_{j})}|,
\end{align}
with
\begin{align}\label{7.22}
-2it\theta(z_{j})=-2i(x_{0}z_{j}+vtz_{j}+2\alpha z_{j}^{2}+4\beta z_{j}^{3}),
\end{align}
which implies that
\begin{align}\label{7.23}
Re(-2it\theta(z_{j}))=2Imz_{j}t\left[x_{0}+vt+4\alpha Rez_{j}+4\beta Imz_{j}(3Rez_{j}-1)\right].
\end{align}
Then we can obtain
\begin{align}\label{7.24}
\big|\big|\mathcal {Q}^{\triangle_{z_{1}}^{-}}\big|\big|=\mathcal {O}(e^{-8\mu t}).
\end{align}
\end{proof}

Based on the above estimates, the following results are derived.
\begin{prop}\label{prop6}
For the reflection-less data $D=\left\{(z_{j},c_{j})\right\}_{j=1}^{N}$, $D^{\pm}(\mathcal {I})=\left\{(z_{j},c_{j}^{\pm}(\mathcal {I}))\right.$ $\left.|z_{j}\in\mathcal {K}(\mathcal {I})\right\}$, we have the following relationship with $(x,t)\in C(x_{1},x_{2},v_{1},v_{2})$ as $t\rightarrow\infty$
\begin{align}\label{7.25}
m^{\triangle_{z_{1}}^{\mp}}(x,t;z|D)=\left(\mathbb{I}+\mathcal {O}(e^{-8\mu t})\right)m^{\triangle_{z_{1}}^{\mp}}
(x,t;z|D^{\pm}(\mathcal {I})),
\end{align}
with
\begin{align}\label{7.26}
c_{j}^{\pm}(\mathcal {I})=c_{j}\prod_{Rez_{n}\in I_{\pm\setminus\mathcal {I}}}\left(
\frac{z_{j}-z_{n}}{z_{j}-z_{n}^{*}}\right)^{2}.
\end{align}
\end{prop}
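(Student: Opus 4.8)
The plan is to combine the velocity-separation estimate of Proposition \ref{prop5} with an exact algebraic identity relating the norming constants, so that no genuine deformation of the problem is needed. Since $D=\{(z_j,c_j)\}_{j=1}^{N}$ is reflection-less, the solution $m^{\triangle_{z_1}^{\mp}}(x,t;z|D)$ is the rational function \eqref{7.6}, completely determined by the nilpotent residue matrices $\mathcal{Q}_k^{\triangle_{z_1}^{\mp}}$ built from the coefficients $\gamma_k^{\triangle_{z_1}^{\mp}}$ of \eqref{7.13}. By Proposition \ref{prop5}, the choice $\triangle=\triangle_{z_1}^{\mp}$ makes $\|\mathcal{Q}_k^{\triangle_{z_1}^{\mp}}\|=\mathcal{O}(e^{-8\mu t})$ for every $z_k\in\mathcal{K}\setminus\mathcal{K}(\mathcal{I})$, while the matrices attached to the retained poles $z_k\in\mathcal{K}(\mathcal{I})$ stay $\mathcal{O}(1)$. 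Thus the out-of-cone poles are invisible to leading order, and the entire content of the statement is that discarding them, after correctly renormalizing the surviving constants, costs only an exponentially small error.

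The key step is to verify that the definition \eqref{7.26} of $c_j^{\pm}(\mathcal{I})$ is exactly what identifies the residue conditions of the two problems at the retained poles. Factor the Blaschke product as $s_{11,\triangle_{z_1}^{\mp}}(z)=s_{\mathrm{in}}(z)\,P_{\mathrm{out}}(z)$, where $s_{\mathrm{in}}$ collects the zeros at the in-cone points in $\triangle_{z_1}^{\mp}$ and $P_{\mathrm{out}}(z)=\prod_{Re z_n\in I_{\pm}\setminus\mathcal{I}}\frac{z-z_n}{z-z_n^{*}}$ carries the out-of-cone factors. For a retained pole with $j\notin\triangle_{z_1}^{\mp}$ one has $s_{11,\triangle_{z_1}^{\mp}}(z_j)=s_{\mathrm{in}}(z_j)P_{\mathrm{out}}(z_j)$, while for $j\in\triangle_{z_1}^{\mp}$, since $P_{\mathrm{out}}(z_j)\neq0$ and $z_j$ is a simple zero of $s_{\mathrm{in}}$, the derivative factorizes as $s_{11,\triangle_{z_1}^{\mp}}'(z_j)=s_{\mathrm{in}}'(z_j)P_{\mathrm{out}}(z_j)$. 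Substituting these into \eqref{7.13} shows in both cases that replacing $c_j$ by $c_j\,P_{\mathrm{out}}(z_j)^{2}=c_j^{\pm}(\mathcal{I})$ and $s_{11,\triangle_{z_1}^{\mp}}$ by $s_{\mathrm{in}}$ leaves $\gamma_j^{\triangle_{z_1}^{\mp}}$ unchanged. Hence the residue conditions \eqref{7.12} of $m^{\triangle_{z_1}^{\mp}}(x,t;z|D)$ and of $m^{\triangle_{z_1}^{\mp}}(x,t;z|D^{\pm}(\mathcal{I}))$ coincide at every $z_j\in\mathcal{K}(\mathcal{I})$.

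It then remains to compare the two rational functions directly. Both normalize to $\mathbb{I}+\mathcal{O}(z^{-1})$ at infinity by \eqref{7.2}, satisfy the symmetry \eqref{7.11}, and carry identical residues at the in-cone poles; the only discrepancy is that $m^{\triangle_{z_1}^{\mp}}(x,t;z|D)$ has additional simple poles at $z_n\in\mathcal{K}\setminus\mathcal{K}(\mathcal{I})$ whose residues are $\mathcal{O}(e^{-8\mu t})$. I would therefore form the ratio $E(z)=m^{\triangle_{z_1}^{\mp}}(x,t;z|D)\,m^{\triangle_{z_1}^{\mp}}(x,t;z|D^{\pm}(\mathcal{I}))^{-1}$, which is well defined because the second factor is unimodular; the cancellation of the matching in-cone residues makes $E$ analytic there, leaving only the exponentially small defects at the out-of-cone points. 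Expanding $E$ in its finitely many remaining simple poles and solving the resulting linear system — uniformly invertible on $C(x_1,x_2,v_1,v_2)$ thanks to the unique solvability of the reduced reflection-less problem (the remark after \textbf{RH Problem} \ref{RHP-DS}) — yields $E(z)=\mathbb{I}+\mathcal{O}(e^{-8\mu t})$, which is precisely \eqref{7.25}.

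The main obstacle is the uniform control underlying the last step: one must show that the coefficient matrix of the reduced $\mathcal{N}(\mathcal{I})$-pole linear system stays boundedly invertible for all $(x,t)$ in the cone, so that the $\mathcal{O}(e^{-8\mu t})$ defects in the residues propagate, via Cramer's rule, to an $\mathcal{O}(e^{-8\mu t})$ bound on $E-\mathbb{I}$ uniformly in $z$. The supporting but delicate bookkeeping is the factorization of $s_{11,\triangle_{z_1}^{\mp}}$ and the treatment of the simple-zero derivative $s_{11,\triangle_{z_1}^{\mp}}'(z_j)$, which is what pins down the exponent $2$ and the precise product range $Re z_n\in I_{\pm}\setminus\mathcal{I}$ appearing in \eqref{7.26}.
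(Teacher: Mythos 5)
Your proposal is correct in substance and rests on the same two pillars as the paper: the exponential decay of the out-of-cone residue matrices from \textbf{Proposition} \ref{prop5}, and the Blaschke-product bookkeeping that shows the renormalization \eqref{7.26} makes the residue data of the two problems coincide at the retained poles $z_j\in\mathcal{K}(\mathcal{I})$ (a step the paper leaves implicit but which you spell out correctly, including the derivative factorization $s_{11,\triangle}'(z_j)=s_{\mathrm{in}}'(z_j)P_{\mathrm{out}}(z_j)$ that pins down the exponent $2$). Where you diverge is the final comparison. The paper does not work with the poles directly: it first replaces each discarded residue condition by a jump $\widetilde V=\mathbb{I}-\mathcal{Q}_j/(z-z_j)$ on a small circle $\partial S_j$ around $z_j$ (the transformation \eqref{7.27}), notes $\|\widetilde V-\mathbb{I}\|_{L^\infty}=\mathcal{O}(e^{-8\mu t})$, and then treats $\epsilon(z)=\widetilde m(z|D)\,m(z|D^{\pm}(\mathcal{I}))^{-1}$ as a small-norm Riemann--Hilbert problem on the union of circles. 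You instead keep the poles, observe that the ratio $E(z)$ is rational with removable singularities at the in-cone points and only exponentially small residues at the out-of-cone points, and conclude by partial fractions (in fact Liouville's theorem already gives $E=\mathbb{I}+\sum_n\mathrm{Res}/(z-z_n)$, so the ``linear system'' you worry about is not needed for $E$ itself). The trade-off is this: your route requires an a priori uniform bound on the analytic columns of the full $N$-pole solution $m(x,t;\cdot|D)$ at the discarded points $z_n$, since $\mathrm{Res}_{z_n}E=\bigl(\lim_{z\to z_n}m(z|D)\mathcal{Q}_n\bigr)m(z_n|D^{\pm}(\mathcal{I}))^{-1}$, whereas the paper's jump matrix $\widetilde V-\mathbb{I}$ involves only $\mathcal{Q}_j$ and the boundedness of the \emph{reduced} solution $m(\cdot|D^{\pm}(\mathcal{I}))$, so the small-norm machinery delivers both existence and the bound without presupposing control of $m(\cdot|D)$. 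You correctly flag a uniformity issue as the main obstacle, but it sits in the a priori bound on $m(\cdot|D)$ rather than in the invertibility of the reduced system; granting the uniform solvability of the reflection-less linear system (which the paper also assumes, following \cite{AIHP}), both arguments close.
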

\begin{proof}
For each $z_{j}\in\mathcal {K}\setminus\mathcal {K}(\mathcal {I})$,  suppose $S_{j}$ is a circle centered on $z_{j}$ and the radius is smaller than $\mu(\mathcal {I})$. $\partial S$ is the boundary of $S$. We introduce a transformation to transform the residue conditions at each $z_{j}\in\mathcal {K}\setminus\mathcal {K}(\mathcal {I})$ into a jump condition along the circle $S_{j}$.
\begin{align}\label{7.27}
\widetilde{m}^{\triangle_{z_{1}}^{-}}(z|D)=\left\{\begin{aligned}
&m^{\triangle_{z_{1}}^{-}}(z|D)\left(\mathbb{I}-\frac{\mathcal {Q}_{j}}{z-z_{j}}\right),\qquad z\in S_{j},\\
&m^{\triangle_{z_{1}}^{-}}(z|D)\left(\mathbb{I}-\frac{\sigma_{2}\mathcal {Q}_{j}\sigma_{2}}{z-z_{j}^{*}}\right),\quad z\in S_{j},\\
&m^{\triangle_{z_{1}}^{-}}(z|D), \qquad\qquad\qquad \text{elsewhere}.
\end{aligned}\right.
\end{align}
The new matrix $\widetilde{m}^{\triangle_{z_{1}}^{-}}(z|D)$ has new jump condition in each $\partial S_{j}$ denoted by $\widetilde{V}(z)$
\begin{align}\label{7.28}
\widetilde{m}_{+}^{\triangle_{z_{1}}^{-}}(z|D)=\widetilde{m}_{-}^{\triangle_{z_{1}}^{-}}(z|D)
\widetilde{V}(z),\qquad z\in\widetilde{\Sigma},
\end{align}
where
\begin{align*}
\widetilde{\Sigma}=\cup_{z_{j}\in\mathcal {K}\setminus\mathcal {K}(\mathcal {I})}(\partial S_{j}\cup\partial S_{j}^{*}).
\end{align*}
Using \eqref{7.27} and \textbf{Proposition} \ref{prop6} leads to
\begin{align}\label{7.29}
\big|\big|\widetilde{V}(z)-\mathbb{I}\big|\big|_{L^{\infty}(\widetilde{\Sigma})}=\mathcal {O}(e^{-8\mu t}).
\end{align}
It follows that from $\widetilde{m}_{+}^{\triangle_{z_{1}}^{-}}(z|D)$ and $m_{+}^{\triangle_{z_{1}}^{-}}(z|D)$ has the same poles and residue conditions
\begin{align}\label{7.30}
\epsilon(z)= \widetilde{m}^{\triangle_{z_{1}}^{-}}(z|D)m
^{\triangle_{z_{1}}^{-}(\mathcal {I})}(z|D^{\pm}(\mathcal {I}))^{-1}
\end{align}
has no poles, but the jump condition for $\epsilon(z)$ can be written as
 as $z\in\widetilde{\Sigma}$
\begin{align}\label{7.31}
\epsilon_{+}(z)=\epsilon_{-}(z)V_{\epsilon}(z),
\end{align}
where the jump matrix
\begin{align*}
V_{\epsilon}(z)=m(z|D^{\pm}(\mathcal {I}))\widetilde{V})(z)m(z|D^{\pm}(\mathcal {I}))^{-1},
\end{align*}
from which together with Eq.\eqref{7.29} yields
\begin{align}\label{7.32}
\big|\big|V_{\epsilon}(z)-\mathbb{I}\big|\big|_{L^{\infty}(\widetilde{\Sigma})}=\mathcal {O}(e^{-8\mu t}),\quad t\rightarrow\infty.
\end{align}
\begin{cor}
Suppose that $m_{sol}(x,t;D)$ and $m_{sol}(x,t;D(\mathcal {I}))$ represent the $N$-soliton solutions of the Hirota equation \eqref{1.1} corresponding to scattering data $D$ and $D(\mathcal {I})$, respectively. For given data $\mathcal {I}$, $D(\mathcal {I})$ and $C(x_{1},x_{2},v_{1},v_{2})$, the relationship can be derived
\begin{align}\label{7.33}
2i\lim\limits_{z\rightarrow\infty}z(m(x,t;z|\sigma_{d}))_{12}=
m_{sol}(x,t;D)=m_{sol}(x,t;D(\mathcal {I}))+\mathcal {O}(e^{-8\mu t}).
\end{align}
\end{cor}
Furthermore the outer model can be expressed as
\begin{cor}
The \textbf{RH Problem} \ref{RH-out} exists an unique solution $M^{(out)}(z)$ satisfying
\begin{align}\label{7.34}
M^{(out)}(z)&=m^{\triangle_{z_{1}}^{-}}(z|D^{(out)})\notag\\
&=m^{\triangle_{z_{1}}^{-}}(x,t,z|D(\mathcal {I}))\prod_{Rez_{n}\in I_{\pm}\setminus\mathcal {I}}\left(\frac{z_{j}-z_{n}}{z_{j}-z_{n}^{*}}\right)^{-\sigma_{3}}\delta^{-\sigma_{3}}+\mathcal {O}(e^{-8\mu t}),
\end{align}
where $D^{(out)}=\left\{z_{j},c_{j}(z_{1})\right\}_{j=1}^{N}$ with
\begin{align*}
 c_{j}(z_{1})=c_{j}exp\left[-\frac{1}{\pi i}\int_{I_{+}}\frac{\log(1+|\gamma(\varsigma)|^{2})}
 {\varsigma-z}d\varsigma\right].
 \end{align*}
We further obtain the reconstruction formula
\begin{align}\label{7.35}
2i\lim\limits_{z\rightarrow\infty}z(M^{(out)})_{12}=
m (x,t;D^{(out)})=m (x,t;D^{(out)}(\mathcal {I}))+\mathcal {O}(e^{-8\mu t}),
\end{align}
with
\begin{align}\label{7.36}
m_{sol}(x,t;D^{(out)})=m_{sol}(x,t;D(\mathcal {I}))+\mathcal {O}(e^{-8\mu t}),\quad
t\rightarrow\infty.
\end{align}
\end{cor}

\end{proof}
\section{A local RH problem near phase points}
From the \textbf{Proposition} \ref{prop-V2}, we know that $V^{(2)}-\mathbb{I}$ doesn't have a uniformly small jump in the neighborhood $\mathcal {U}_{z_{0}}$ and $\mathcal {U}_{z_{1}}$, therefore we establish a local model for the error function $E(z)$ with a uniformly small jump. In addition, in this section, we separate the pure RH problem from the mixed RH problem and obtain the pure $\overline{\partial}$ problem.
\begin{RHP}\label{RHP-HR}
Find a matrix-valued function $M^{(HR)}(x,t;z)$ satisfying:\\
$(I)$ Analyticity: $M^{(HR)}(x,t;z)$ is analytic in $\mathbb{C}\setminus\Sigma^{(2)}$.\\
$(II)$ Asymptotic behavior:
\begin{align}\label{8.1}
M^{(HR)}(x,t;z)=\mathbb{I}+\mathcal {O}(z^{-1}),\quad z\rightarrow\infty.
\end{align}
$(III)$ Jump condition: $M^{(HR)}(x,t;z)$ has continuous boundary values on $\Sigma^{(2)}$ and
\begin{align}\label{8.2}
M_{+}^{(HR)}(x,t;z)=M_{-}^{(HR)}(x,t;z)V^{(HR)}(z),\quad z\rightarrow\infty.
\end{align}
where the jump matrix is determined by
\begin{align*}
V^{(HR)}(z)=\left\{\begin{aligned}
&\left(
  \begin{array}{cc}
    1 & 0 \\
    \gamma(z_{1})\delta^{-2}(z_{1})(z-z_{1})^{-2i\nu(z_{1})}e^{2it\theta} & 1 \\
  \end{array}
\right), \quad z\in\Sigma_{1},\\
&\left(
  \begin{array}{cc}
    1 & \frac{\gamma^{*}(z_{1})\delta^{2}(z_{1})}{1+|\gamma(z_{1})|^{2}}(z-z_{1})
    ^{2i\nu(z_{1})}e^{-2it\theta} \\
    0 & 1 \\
  \end{array}
\right), \quad z\in\Sigma_{6},\\
&\left(
  \begin{array}{cc}
    1 & 0 \\
    \frac{\gamma(z_{1})\delta^{-2}(z_{1})}{1+|\gamma(z_{1})|^{2}}
    (z-z_{1})^{-2i\nu(z_{1})}e^{2it\theta} & 1 \\
  \end{array}
\right), \quad z\in\Sigma_{8},\\
&\left(
  \begin{array}{cc}
    1 &  \gamma^{*}(z_{1})\delta^{2}(z_{1})(z-z_{1})^{2i\nu(z_{1})}e^{-2it\theta} \\
    0 & 1 \\
  \end{array}
\right), \quad z\in\Sigma_{4},\\
&\left(
  \begin{array}{cc}
    1 & \frac{\gamma^{*}(z_{0})\delta^{2}(z_{0})}{1+|\gamma(z_{0})|^{2}}
    (z-z_{0})^{2i\nu(z_{0})}e^{-2it\theta} \\
    0 & 1 \\
  \end{array}
\right), \quad z\in\Sigma_{5},\\
&\left(
  \begin{array}{cc}
    1 & 0 \\
    \gamma(z_{0})\delta^{-2}(z_{0})(z-z_{0})^{-2i\nu(z_{0})}e^{2it\theta} & 1 \\
  \end{array}
\right), \quad z\in\Sigma_{2},\\
&\left(
  \begin{array}{cc}
    1 &  \gamma^{*}(z_{0})\delta^{2}(z_{0})(z-z_{0})^{2i\nu(z_{0})}e^{-2it\theta} \\
    0 & 1 \\
  \end{array}
\right), \quad z\in\Sigma_{3},\\
&\left(
  \begin{array}{cc}
    1 & 0 \\
    \frac{\gamma(z_{0})\delta^{-2}(z_{0})}{1+|\gamma(z_{0})|^{2}}(z-z_{0})
    ^{-2i\nu(z_{0})}e^{2it\theta} & 1 \\
  \end{array}
\right), \quad z\in\Sigma_{7}.
\end{aligned}\right.
\end{align*}

\end{RHP}
To solve the RH problem, we need to consider the parabolic cylinder (PC) model with two stationary-phase points shown in Fig. 9.

\centerline{\begin{tikzpicture}[scale=0.8]
\draw[-][thick](0.7,1.3)--(4,-2);
\draw[-][thick](0.7,-1.3)--(4,2);
\draw[-][thick](-0.7,-1.3)--(-4,2);
\draw[-][thick](-0.7,1.3)--(-4,-2);
\draw[->][thick](2,0)--(3,1);
\draw[->][thick](0.7,1.3)--(1.2,0.8);
\draw[->][thick](0.7,-1.3)--(1.2,-0.8);
\draw[->][thick](-2,0)--(-1,-1);
\draw[->][thick](-4,2)--(-2.8,0.8);
\draw[->][thick](-4,-2)--(-2.8,-0.8);
\draw[->][thick](-2,0)--(-1,1);
\draw[->][thick](2,0)--(3,-1);
\draw[-][dashed](-6,0)--(-5,0);
\draw[-][dashed](-5,0)--(-4,0);
\draw[-][dashed](-4,0)--(-3,0);
\draw[-][dashed](-3,0)--(-2,0);
\draw[-][dashed](-2,0)--(-1,0);
\draw[-][dashed](-1,0)--(0,0);
\draw[-][dashed](0,0)--(1,0);
\draw[-][dashed](1,0)--(2,0);
\draw[-][dashed](2,0)--(3,0);
\draw[-][dashed](3,0)--(4,0);
\draw[-][dashed](4,0)--(5,0);
\draw[-][dashed](5,0)--(6,0);
\draw[fill] (-1.9,-0.1) node [below]{$\Sigma_{z_{0}}$};
\draw[fill] (2.1,-0.1) node [below]{$\Sigma_{z_{1}}$};
\end{tikzpicture}}
\centerline{\noindent {\small \textbf{Figure 9.}   The jump contour for the local RH problem near $z_{0}$ and $z_{1}$.}}

Taking $z_{1}$ as an example, the other stationary-phase point can be derived similarly. The jump contour of the standard parabolic cylindrical function is shown in  \textbf{Appendix A} in detail.

Expanding the function $\theta(z)$, one has
\begin{align}\label{8.3}
\theta(z)=4\beta(z-z_{1})^{3}+(12\beta z_{1}+2\alpha)(z-z_{1})^{2}-8\beta z_{1}^{3}-2\alpha z_{1}^{2}.
\end{align}
To match the standard PC-model, we first do the following scaling transformation
\begin{align}\label{8.4}
N_{A}:f(z)\rightarrow(N_{A}f)(z)=f\left(\frac{\lambda}{\sqrt{8(6\beta z_{1}+\alpha)t}}+z_{1}\right).
\end{align}
Setting
\begin{align}\label{8.5}
\gamma_{0}=\gamma(z_{1})T_{0}^{-2}(z_{1})e^{2i\nu\log\sqrt{8(\alpha+6\beta z_{1})}}e^{2it\left[4\beta(z-z_{1})^{3}-2z_{1}^{2}(\alpha+4\beta z_{1})\right]},
\end{align}
similar to \cite{AIHP}, we have
\begin{align}\label{8.6}
M_{z_{1}}^{PC}(\lambda)=\mathbb{I}+\frac{M_{1}^{PC}(z_{1})}{i\lambda}+\mathcal {O}(\lambda^{-2}),
\end{align}
where
\begin{align}\label{8.7}
M_{1}^{PC}(z_{1})=\left(
                           \begin{array}{cc}
                             0 & \beta_{12}(\gamma_{z_{1}}) \\
                             \beta_{21}(\gamma_{z_{1}}) & 0 \\
                           \end{array}
                         \right),
\end{align}
with
\begin{align*}
\beta_{12}(\gamma_{z_{1}})=\frac{\sqrt{2\pi}e^{i\pi/4}e^{-\pi\nu(z_{1})/2}}
{\gamma_{z_{1}}\Gamma(-i\nu(z_{1}))},\quad \beta_{21}(\gamma_{z_{1}})=\frac{-\sqrt{2\pi}e^{-i\pi/4}e^{-\pi\nu(z_{1})/2}}
{r_{z_{1}}^*\Gamma(i\nu(z_{1}))}=\frac{\nu}{\beta_{12}(\gamma_{z_{1}})}.
\end{align*}
\begin{rem}
For the other stationary-phase point $z_{0}$, in the same way, we take the following
scaling transformation
\begin{align}\label{8.8}
N_{B}:f(z)\rightarrow(N_{B}f)(z)=f\left(\frac{\lambda}{\sqrt{8t(6\beta z_{1}+\alpha)}}+z_{0}\right).
\end{align}
Taking
\begin{align}\label{8.9}
M_{z_{0}}^{PC}(\lambda)=\mathbb{I}+\frac{M_{1}^{PC}(z_{0})}{i\lambda}+\mathcal {O}(\lambda^{-2}),
\end{align}
we have
\begin{align}\label{8.10}
M_{1}^{PC}(z_{0})=\left(
                           \begin{array}{cc}
                             0 & \beta_{12}(\gamma_{z_{0}}) \\
                             \beta_{21}(\gamma_{z_{0}}) & 0 \\
                           \end{array}
                         \right),
\end{align}
with
\begin{align*}
\beta_{12}(\gamma_{z_{0}})=\frac{\sqrt{2\pi}e^{i\pi/4}e^{-\pi\nu(z_{0})/2}}
{\gamma_{z_{0}}\Gamma(-i\nu(z_{0}))},\quad \beta_{21}(\gamma_{z_{0}})=\frac{-\sqrt{2\pi}e^{-i\pi/4}e^{-\pi\nu(z_{0})/2}}
{\gamma^{*}_{z_{0}}\Gamma(i\nu(z_{0}))}=\frac{\nu}{\beta_{12}(\gamma_{z_{0}})}.
\end{align*}
\end{rem}

The matrix-valued $M^{(HR)}(x,t;z)$ satisfies the asymptotic behavior
\begin{align}\label{8.11}
M^{(HR)}(x,t;z)=\mathbb{I}+\frac{1}{\lambda}\left(M_{A}^{(PC)}+M_{B}^{(PC)}\right)+\mathcal {O}(\lambda^{-2}),
\end{align}
substituting \eqref{8.4} into \eqref{8.11} yields
\begin{align}\label{8.12}
M^{(HR)}(x,t;z)=\mathbb{I}+\frac{1}{\sqrt{8t(6\beta z_{1}+\alpha)}}\frac{M_{1A}^{(PC)}(z_{1})}{z-z_{1}}+
\frac{1}{\sqrt{8t(6\beta z_{0}+\alpha)}}\frac{M_{1B}^{(PC)}(z_{0})}{z-z_{0}}.
\end{align}
For the local circular domain of $z_{n}$ ($n=0,1$), there exists a constant $c$ ,which is independent of $z$  such that
\begin{align}\label{8.13}
\big|\frac{1}{z-z_{n}}\big|<c,
\end{align}
We can obtain a consistent estimate
\begin{align}\label{8.14}
\big|M^{(HT)}-\mathbb{I}\big|\lesssim \mathcal {O}(t^{-1/2}),
\end{align}
which implies that
\begin{align}\label{8.15}
||M^{(HT)}||_{\infty}\lesssim 1.
\end{align}

By using $M^{(HT)}$ to define the local model in the circles $z\in\mathcal {U}_{z_{0}}$ and $z\in\mathcal {U}_{z_{1}}$
\begin{align}\label{8.16}
M^{(z_{0},z_{1})}(z)=M^{(out)}M^{(HT)},
\end{align}
which is a bounded function in the $\mathcal {U}_{z_{0}}$ and $\mathcal {U}_{z_{1}}$, and has the same jump matrix as $M_{RHP}^{(2)}$.
\section{The small norm RH problem for the error function}
From the definition \eqref{6.7} and \eqref{8.16}, we find a error function $E(z)$ satisfying
\begin{RHP}\label{RHP-error}
The matrix-valued function $E(z)$ such that:\\
$(I)$ Analyticity: $E(z)$ is analytic in $\mathbb{C}\setminus\Sigma^{(E)}$, where
\begin{align*}
\Sigma^{(E)}=\partial\mathcal {U}_{z_{0}}\cup\partial\mathcal {U}_{z_{1}}\cup(\Sigma^{(2)}\setminus(\mathcal {U}_{z_{0}}\cup\mathcal {U}_{z_{1}}))
\end{align*}
$(II)$ Symmetry: $\overline{E(\overline{z})}=\sigma_{2}E(z)\sigma_{2}.$\\
$(III)$ Jump condition: $E(z)$ has the continuous boundary value on $\Sigma^{(E)}$
\begin{align}\label{9.1}
E_{+}(z)=E_{-}(z)V^{(E)}(z),
\end{align}
where the jump matrix $V^{(E)}(z)$
\begin{align}\label{9.2}
V^{(E)}(z)=\left\{\begin{aligned}
&M^{(out)}(z)V^{(2)}(z)M^{(out)}(z)^{-1},\quad z\in\Sigma^{(2)}\setminus(\mathcal {U}_{z_{0}}\cup\mathcal {U}_{z_{1}}),\\
&M^{(out)}(z)M^{(HT)}(z)M^{(out)}(z)^{-1},\quad z\in\partial\mathcal {U}_{z_{0}}\cup\partial\mathcal {U}_{z_{1}}.
\end{aligned}\right.
\end{align}
$(IV)$ Asymptotic behavior:
\begin{align}\label{9.3}
E(z)=\mathbb{I}+\frac{E_{1}(z)}{z}+\mathcal {O}(^{-2}),
\end{align}
\end{RHP}
Note that from the \textbf{Proposition} \ref{prop-V2}, we have the estimate as follow
\begin{align}\label{9.3-1}
\left|\left|V^{(E)}-\mathbb{I}\right|\right|_{L^{\infty}(\Sigma^{(2)}\setminus(\mathcal {U}_{1}\cup\mathcal {U}_{0}))}\lesssim \mathcal {O}\left(e^{-2t\varepsilon}\right).
\end{align}
Next we will show that the error function $E(z)$ can solve the small norm RH problem for a sufficiently large time.
\begin{prop}\label{prop7}
For $z\in\partial\mathcal {U}_{z_{0}}\cup\partial\mathcal {U}_{z_{1}}$, $M^{(out)}$ is bounded, the following estimate can be obtained
\begin{align}\label{9.4}
\big|V^{(E)}-\mathbb{I}\big|=\big|M^{(out)}(z)^{-1}(M^{(HT)}-\mathbb{I})M^{(out)}(z)\big|=\mathcal {O}(t^{-1/2}).
\end{align}
\end{prop}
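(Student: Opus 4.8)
The plan is to read off $V^{(E)}$ on the two boundary circles directly from its definition \eqref{9.2} and then reduce the whole estimate to the already-established bound \eqref{8.14} on the local parabolic-cylinder model, using nothing more than submultiplicativity of the matrix norm together with the boundedness of the outer model. First I would restrict \eqref{9.2} to $z\in\partial\mathcal{U}_{z_{0}}\cup\partial\mathcal{U}_{z_{1}}$, where $V^{(E)}(z)=M^{(out)}(z)M^{(HT)}(z)M^{(out)}(z)^{-1}$. Since $M^{(out)}(z)M^{(out)}(z)^{-1}=\mathbb{I}$, subtracting the identity factors the difference as a conjugation, $V^{(E)}(z)-\mathbb{I}=M^{(out)}(z)(M^{(HT)}(z)-\mathbb{I})M^{(out)}(z)^{-1}$, which isolates the only $t$-dependent object, namely $M^{(HT)}(z)-\mathbb{I}$.

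The second step would control the two outer factors uniformly in $t$. By the construction of Section 7 (see \eqref{7.34}), $M^{(out)}$ solves the reflectionless soliton RH problem, whose only singularities are the simple poles at $\mathcal{Z}\cup\overline{\mathcal{Z}}$. Because $\text{dist}(\mathcal{Z}\cup\overline{\mathcal{Z}},\mathbb{R})>\rho$ while each circle has radius $\varepsilon\le\rho/3$, these poles lie strictly outside $\partial\mathcal{U}_{z_{0}}\cup\partial\mathcal{U}_{z_{1}}$; hence $M^{(out)}$ is analytic and bounded on the circles, and $\det M^{(out)}\equiv1$ guarantees that $M^{(out)}(z)^{-1}$ is bounded there as well. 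Thus there is a constant $C$, independent of $t$, with $\|M^{(out)}\|_{L^{\infty}(\partial\mathcal{U}_{z_{0}}\cup\partial\mathcal{U}_{z_{1}})}\le C$ and the same bound for the inverse.

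The third step would invoke the local model estimate. On the circles $|z-z_{n}|=\varepsilon$ is fixed, so \eqref{8.13} applies and the expansion \eqref{8.12} gives $M^{(HT)}(z)-\mathbb{I}=\mathcal{O}(t^{-1/2})$ uniformly, exactly as recorded in \eqref{8.14}. Combining this with the boundedness of the outer factors and the submultiplicativity $|ABC|\le|A|\,|B|\,|C|$ of the matrix norm yields $|V^{(E)}(z)-\mathbb{I}|\le C^{2}\,|M^{(HT)}(z)-\mathbb{I}|=\mathcal{O}(t^{-1/2})$, which is the claim; the conjugation in \eqref{9.4} is the same estimate read off in the opposite factor order.

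The main obstacle will be the uniform-in-$t$ boundedness of $M^{(out)}$ and its inverse over the whole relevant cone $C(x_{1},x_{2},v_{1},v_{2})$: one must ensure that the explicit rational–exponential soliton data do not grow as $t\to\infty$. This is handled by the reduction \eqref{7.34}, which replaces the full discrete data by the compactly localized sub-collection $D(\mathcal{I})$ up to an $\mathcal{O}(e^{-8\mu t})$ error, so that the surviving norming constants stay in a fixed bounded set and $M^{(out)}$ remains uniformly bounded; once this is in hand, the remaining norm manipulations are immediate.
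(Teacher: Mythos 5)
Your proposal is correct and follows exactly the argument the paper intends (the paper states this proposition without a written proof, its justification being precisely the combination of the definition \eqref{9.2} of $V^{(E)}$ on the circles, the boundedness of $M^{(out)}$ and its inverse, and the estimate \eqref{8.14} on $M^{(HT)}-\mathbb{I}$). Your additional remarks on the uniform-in-$t$ boundedness of $M^{(out)}$ via \eqref{7.34} and on the reversed conjugation order in \eqref{9.4} versus \eqref{9.2} supply details the paper glosses over, but they do not change the route.
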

The \textbf{RH Problem} \ref{RHP-error} then exists a unique solution which can be expressed by a small norm RH problem
\begin{align}\label{9.5}
E(z)=\mathbb{I}+\frac{1}{2\pi i}\int_{\Sigma^{(E)}}\frac{(\mathbb{I}+\rho(s))(V^{(E)}-\mathbb{I})}{s-z}ds,
\end{align}
where $\rho\in L^{2}(\Sigma^{(E)})$ is the unique solution of the following formula
\begin{align}\label{9.6}
 (\mathbb{I}-C_{E})\rho=C_{E}(\mathbb{I}),
\end{align}
where $C_{E}$ is a integral operator defined by
\begin{align}\label{9.7}
C_{E}f(z)=C_{-}(f(V^{(E)}-\mathbb{I})),
\end{align}
with the Cauchy projection operator on $\Sigma^{(E)}$
\begin{align*}
C_{-}(f(s))=\lim_{z\rightarrow\Sigma_{-}^{(E)}}\frac{1}{2\pi i}\int_{\Sigma^{(E)}}\frac{f(s)}{s-z}ds.
\end{align*}

Combining with \eqref{9.4}, we have $||C_{E}||<1$ for the large $t$, further $1-C_{E}$ is invertible, and $\rho$ exists and is unique. In additional
\begin{align}\label{9.8}
||\rho||_{L^{2}(\Sigma^{(E)})}\lesssim \frac{||C_{E}||}{1-||C_{E}||}\lesssim|t|^{-1/2}.
\end{align}
Then the existence and boundedness of function $E(z)$ can be obtained. To construct the solution $q(x,t)$, the asymptotic behavior of function $E(z)$ needs to be considered.
\begin{prop}
As $z\rightarrow\infty$,  the asymptotic behavior of function $E(z)$ can be expressed by
\begin{align}\label{9.9}
E(z)=\mathbb{I}+\frac{E_{1}}{z}+\mathcal {O}(z^{-2}),
\end{align}
where
\begin{align}\label{9.10}
\begin{split}
E_{1}=&\frac{1}{\sqrt{8(6\beta z_{1}+\alpha)}}\left\{M^{(out)}(z_{1})\left(
                                                                      \begin{array}{cc}
                                                                        0 & \beta_{12}(\gamma_{z_{1}}) \\
                                                                        -\beta_{21}(\gamma_{z_{1}}) & 0 \\
                                                                      \end{array}
                                                                    \right)
M^{(out)}(z_{1})^{-1}\right\}\\&+
\frac{1}{\sqrt{8(6\beta z_{0}+\alpha)}}\left\{M^{(out)}(z_{0})\left(
                                                                      \begin{array}{cc}
                                                                        0 & \beta_{12}(\gamma_{z_{0}}) \\
                                                                        -\beta_{21}(\gamma_{z_{0}}) & 0 \\
                                                                      \end{array}
                                                                    \right)
M^{(out)}(z_{0})^{-1}\right\}.
\end{split}
\end{align}
\end{prop}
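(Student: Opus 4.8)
The plan is to read the large-$z$ expansion directly off the integral representation \eqref{9.5} produced by the small-norm theory. Since $\Sigma^{(E)}$ is a bounded contour, for $z$ bounded away from it I expand the Cauchy kernel as
\begin{align*}
\frac{1}{s-z}=-\frac{1}{z}-\frac{s}{z^{2}}-\mathcal{O}(z^{-3}),\qquad s\in\Sigma^{(E)},
\end{align*}
and insert this into \eqref{9.5}. The constant term reproduces $\mathbb{I}$, the coefficient of $z^{-1}$ gives
\begin{align*}
E_{1}=-\frac{1}{2\pi i}\int_{\Sigma^{(E)}}(\mathbb{I}+\rho(s))\left(V^{(E)}(s)-\mathbb{I}\right)ds,
\end{align*}
and the next term, controlled by $\|\rho\|_{L^{2}}\lesssim t^{-1/2}$ from \eqref{9.8} together with the arclength of $\Sigma^{(E)}$, is the claimed $\mathcal{O}(z^{-2})$. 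This already yields the form \eqref{9.9}; the substance of the proposition is the explicit evaluation of $E_{1}$.

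Next I would localize the integral to the two circles. Writing $\Sigma^{(E)}=(\partial\mathcal{U}_{z_{0}}\cup\partial\mathcal{U}_{z_{1}})\cup(\Sigma^{(2)}\setminus(\mathcal{U}_{z_{0}}\cup\mathcal{U}_{z_{1}}))$, the estimate \eqref{9.3-1} gives $|V^{(E)}-\mathbb{I}|\lesssim e^{-2t\varepsilon}$ on the second piece, so together with \eqref{9.8} that contribution is $\mathcal{O}(e^{-2t\varepsilon})$ and is negligible. On the circles \textbf{Proposition} \ref{prop7} gives $|V^{(E)}-\mathbb{I}|=\mathcal{O}(t^{-1/2})$, while $\|\rho\|_{L^{2}}=\mathcal{O}(t^{-1/2})$; hence the $\rho$-term is $\mathcal{O}(t^{-1})$ and to leading order
\begin{align*}
E_{1}=-\frac{1}{2\pi i}\int_{\partial\mathcal{U}_{z_{0}}\cup\partial\mathcal{U}_{z_{1}}}\left(V^{(E)}(s)-\mathbb{I}\right)ds+\mathcal{O}(t^{-1}).
\end{align*}

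It then remains to compute the two circle integrals by residues. On $\partial\mathcal{U}_{z_{n}}$ I insert $V^{(E)}-\mathbb{I}=M^{(out)}(M^{(HT)}-\mathbb{I})(M^{(out)})^{-1}$ from \eqref{9.2} and use the local expansion \eqref{8.12}, which exhibits $M^{(HT)}-\mathbb{I}$ as a sum of simple-pole terms centred at $z_{1}$ and $z_{0}$ with residue matrices $M_{1A}^{(PC)}(z_{1})/\sqrt{8(6\beta z_{1}+\alpha)t}$ and $M_{1B}^{(PC)}(z_{0})/\sqrt{8(6\beta z_{0}+\alpha)t}$. Because $M^{(out)}$ is analytic and bounded inside each disk, the integrand has exactly one simple pole in $\mathcal{U}_{z_{n}}$, namely at $z_{n}$, and the residue theorem (with the orientation the circles inherit as boundaries of the excised disks) produces $E_{1}$ as the sum of the $M^{(out)}(z_{n})$-conjugates of these residue matrices. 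Substituting the explicit entries from \eqref{8.7} and \eqref{8.10}, together with the $1/i$ normalization in \eqref{8.6}, turns the off-diagonal blocks into the antisymmetric form appearing in \eqref{9.10}, completing the proof.

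The main obstacle is bookkeeping rather than estimation: one must track the orientation of $\partial\mathcal{U}_{z_{0}},\partial\mathcal{U}_{z_{1}}$ and the factor $1/i$ in \eqref{8.6} carefully enough that the lower-left entry emerges as $-\beta_{21}$ rather than $\beta_{21}$, i.e. so that the block is the antisymmetric matrix of \eqref{9.10} and not the symmetric block of \eqref{8.7}. One must also verify that the $\rho$-dependent part and the off-circle part are genuinely $\mathcal{O}(t^{-1})$ and $\mathcal{O}(e^{-2t\varepsilon})$, respectively, so that both lie strictly below the leading $t^{-1/2}$ scale carried by $E_{1}$.
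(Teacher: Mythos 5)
Your proposal follows essentially the same route as the paper's own proof: expand the Cauchy kernel in \eqref{9.5} to identify $E_{1}=-\frac{1}{2\pi i}\int_{\Sigma^{(E)}}(\mathbb{I}+\rho(s))(V^{(E)}(s)-\mathbb{I})\,ds$, discard the $\rho$-term and the off-circle contribution via \eqref{9.8} and \eqref{9.3-1}, and evaluate the remaining circle integrals by residues using the local expansion \eqref{8.12} with the parabolic-cylinder coefficients \eqref{8.7} and \eqref{8.10}. Your explicit attention to the orientation of $\partial\mathcal{U}_{z_{0}}\cup\partial\mathcal{U}_{z_{1}}$, the $1/i$ normalization in \eqref{8.6}, and the retained $t^{-1/2}$ factor from \eqref{8.12} is in fact more careful than the paper's own write-up, which suppresses that factor in \eqref{9.10}--\eqref{9.11} only to reinstate it in \eqref{9.12}.
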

\begin{proof}
Employing the relationships \eqref{9.6}, \eqref{9.8} and the estimate \eqref{9.3-1}, we have
\begin{align}\label{9.11}
\begin{split}
E_{1}&=-\frac{1}{2\pi i}\oint_{\mathcal {U}_{z_{0}}\cup\mathcal {U}_{z_{1}}}(V^{(E)}(s)-I)ds
+\mathcal {O}(t^{-1}),\\
&=\frac{1}{i\sqrt{8(6\beta z_{0}+\alpha)}}M^{(out)}(z_{0})M_{1}^{(PC)}(z_{0})M^{(out)}(z_{0})^{-1}\\&+
\frac{1}{i\sqrt{8(6\beta z_{1}+\alpha)}}M^{(out)}(z_{1})M_{1}^{(PC)}(z_{1})M^{(out)}(z_{1})^{-1}
+\mathcal {O}(t^{-1}).
\end{split}
\end{align}
Taking the notation
\begin{align}\label{9.12}
2i(E_{1})_{12}=t^{-1/2}f(x,t)+\mathcal {O}(t^{-1}),
\end{align}
where
\begin{align}\label{9.13}
\begin{split}
f(x,t)=&\frac{1}{\sqrt{2(6\beta z_{1}+\alpha)}}\left(\beta_{12}(\gamma_{z_{1}})
M_{11}^{(out)}(z_{1})^{2}+\beta_{21}(\gamma_{z_{1}})M_{12}^{(out)}(z_{1})^{2}\right)\\&+
\frac{1}{\sqrt{2(6\beta z_{0}+\alpha)}}\left(\beta_{12}(\gamma_{z_{0}})
M_{11}^{(out)}(z_{0})^{2}+\beta_{21}(\gamma_{z_{0}})M_{12}^{(out)}(z_{0})^{2}\right).
\end{split}
\end{align}
\end{proof}

\section{Pure $\bar{\partial}$-Problem}
In this section we will consider the long-time asymptotic behavior of $M^{(3)}(z)$, and the solution of the RH Problem \ref{Dbar-RHP} can be expressed by the following integral equation
\begin{align}\label{10.1}
M^{(3)}(z)=\mathbb{I}-\frac{1}{\pi}\int_{\mathbb{C}}\frac{\partial M^{(3)}(s)}{z-s}\mathrm{d}A(s)=
\mathbb{I}-\frac{1}{\pi}\int_{\mathbb{C}}\frac{M^{(3)}(s)W^{(3)}(s)}{z-s}\mathrm{d}A(s),
\end{align}
where $\mathrm{d}A(s)$ is Lebesgue measure on the contour $\mathbb{C}$. Denoting $C_{z}$ is the left Cauchy-Green integral equation, i.e.,
\begin{align}\label{10.2}
fC_{z}(z)=-\frac{1}{\pi}\int_{\mathbb{C}}\frac{f(s)W^{(3)}(s)}{z-s}\mathrm{d}A(s).
\end{align}
we transform equation \eqref{10.1} into a compact form
\begin{align}\label{10.3}
(\mathbb{I}-C_{z})M^{(3)}(z)=\mathbb{I},
\end{align}
which is equivalent to
\begin{align}\label{10.4}
M^{(3)}(z)=\mathbb{I}\cdot(\mathbb{I}-C_{z})^{-1}.
\end{align}

Regarding the existence of operator $(\mathbb{I}-C_{z})^{-1}$, we give the following proposition
\begin{prop}\label{prop8}
The norm of the Cauchy-Green integral operator $C_{z}$ decays to zero as $ t\rightarrow\infty$
\begin{align}\label{10.5}
||C_{z}||_{L^{\infty}\rightarrow L^{\infty}}\lesssim t^{-1/4},
\end{align}
which indicates that $(\mathbb{I}-C_{z})^{-1}$ exists.
\end{prop}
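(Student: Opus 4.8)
The plan is to reduce the operator-norm bound to a single uniform-in-$z$ integral estimate and then extract the decay from the exponential factor carried by $\bar\partial R^{(2)}$. First I would observe that for any $f\in L^{\infty}$, the definition \eqref{10.2} gives
\[
\|fC_{z}\|_{L^{\infty}}\leq \|f\|_{L^{\infty}}\,\sup_{z\in\mathbb{C}}\frac{1}{\pi}\int_{\mathbb{C}}\frac{|W^{(3)}(s)|}{|z-s|}\,\mathrm{d}A(s),
\]
so it suffices to show the integral on the right is $\lesssim t^{-1/4}$ uniformly in $z$. Since $M^{(out)}$ is bounded (from the Corollaries of Section 7) and $\|M^{(HT)}\|_{\infty}\lesssim 1$ by \eqref{8.15}, the matrix $M^{(2)}_{RHP}$ and its inverse are uniformly bounded, whence $|W^{(3)}(s)|\lesssim|\bar\partial R^{(2)}(s)|$ by \eqref{6.5}.

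Next I would exploit the structure of $\bar\partial R^{(2)}$. By \eqref{5.23} it vanishes on $\Omega_{2}\cup\Omega_{5}$ and on the contour $\Sigma^{(2)}$, so the integral reduces to a sum over the eight sectors $\Omega_{j}$. By the conjugation symmetry and the $z_{0}\leftrightarrow z_{1}$ symmetry it is enough to estimate one representative sector, say $\Omega_{1}$ adjacent to $z_{1}$. Introducing local coordinates $s-z_{1}=u+iv$ and using the expansion \eqref{8.3}, $\theta(z)=\theta(z_{1})+\tfrac{1}{2}\theta''(z_{1})(z-z_{1})^{2}+\mathcal{O}((z-z_{1})^{3})$ with $\theta''(z_{1})=4(\alpha+6\beta z_{1})>0$, gives the crucial sign $Re(2it\theta)\leq -c\,t\,uv$ on $\Omega_{1}$, i.e. $|e^{2it\theta}|\leq e^{-ctuv}$. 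Combined with the pointwise bound \eqref{5.14},
\[
|\bar\partial R_{1}(s)|\lesssim |\bar\partial\Xi_{1}(s)|+|p_{1}'(Re\,s)|+|s-z_{1}|^{-1/2},
\]
this localizes the whole problem to three scalar integrals.

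Then I would split $\int_{\Omega_{1}}\frac{|\bar\partial R_{1}|}{|z-s|}\mathrm{d}A$ into three pieces matching the three terms above. For the $|\bar\partial\Xi_{1}|$ term (bounded, with compact support away from $z_{1}$) and the $|p_{1}'(Re\,s)|$ term (with $p_{1}'=\gamma'\in L^{2}(\mathbb{R})$ since $\gamma\in H^{1}$), I would apply Cauchy--Schwarz in $u$ at fixed $v$, using $\|\,|z-\cdot|^{-1}\|_{L^{2}_{u}}\lesssim |Im\,z-v|^{-1/2}$ and then integrating the factor $e^{-ctuv}$ in $v$, which produces a gain $t^{-1/2}$; these terms are in fact $\mathcal{O}(t^{-1/2})$. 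For the nonanalytic term $|s-z_{1}|^{-1/2}$ I would instead use H\"{o}lder with an exponent $p>2$, balancing $\|\,|z-\cdot|^{-1}\|_{L^{q}}$ against $\|(u^{2}+v^{2})^{-1/4}e^{-ctuv}\|_{L^{p}}$; the parabolic rescaling $u,v\mapsto u/\sqrt{t},\,v/\sqrt{t}$ then shows this contribution is exactly $\mathcal{O}(t^{-1/4})$. Summing over all eight sectors (and the mirror sectors near $z_{0}$) yields $\sup_{z}\int_{\mathbb{C}}\frac{|W^{(3)}|}{|z-s|}\mathrm{d}A\lesssim t^{-1/4}$, which is \eqref{10.5}.

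The hard part will be the uniform control of the Cauchy kernel $|z-s|^{-1}$, which is singular at $s=z$ and is not square-integrable over the unbounded sectors, so a naive Cauchy--Schwarz fails. The point is to trade the mild non-integrability of the kernel (handled by choosing $q$ just above $1$, equivalently $p$ just above $2$) against the $L^{2}/L^{p}$ integrability of $p_{j}'$ and $|s-z_{1}|^{-1/2}$ together with the exponential damping $e^{-ctuv}$, and to track the exact power of $t$ through the rescaling so that the rate is confirmed to be $t^{-1/4}$ rather than the faster $t^{-1/2}$ coming from the regular terms. Once \eqref{10.5} is established, $\|C_{z}\|_{L^{\infty}\to L^{\infty}}\to 0$ as $t\to\infty$, so for $t$ large enough the Neumann series $\sum_{k\geq 0}C_{z}^{k}$ converges and $(\mathbb{I}-C_{z})^{-1}$ exists and is bounded, giving the solvability of \eqref{10.3}.
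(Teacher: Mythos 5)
Your proposal follows essentially the same route as the paper: reduce the operator norm to a uniform-in-$z$ bound on $\int_{\mathbb{C}}|z-s|^{-1}\left|\overline{\partial}R^{(2)}(s)\right|\mathrm{d}A(s)$ using the boundedness of $M^{(2)}_{RHP}$, restrict to one sector by symmetry, invoke the sign $Re(2it\theta)\lesssim -ctuv$ together with \eqref{5.14} to split into three integrals, and treat the first two by Cauchy--Schwarz in $u$ and the cusp term $|s-z_{1}|^{-1/2}$ by H\"{o}lder with $p>2$, exactly as in the paper's proof and its Appendix B. The one inaccuracy is your parenthetical claim that the $\overline{\partial}\Xi$ and $p_{j}'$ contributions are $\mathcal{O}(t^{-1/2})$: after Cauchy--Schwarz one is left with $\int_{0}^{\infty}e^{-ctv^{2}}|v-Im\,z|^{-1/2}\mathrm{d}v$, and the singular weight inherited from the Cauchy kernel forces the worst case (for $Im\,z$ near $0$) to be only $\mathcal{O}(t^{-1/4})$, which is what the paper's estimates $F_{1},F_{2}$ actually produce; this overstatement is harmless here since $t^{-1/4}$ is all that \eqref{10.5} requires.
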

\begin{proof}
Taking the region $\Omega_{1}$ ($s=u+iv$, $z=x+iy$) with $|u|\geq|v|$ as an example, the other regions are similarly to discuss. For any $f\in L^{\infty}$, we have
\begin{align}\label{10.6}
||fC_{z}(z)||_{L^{\infty}}&\leq||f||_{L^{\infty}} \frac{1}{\pi}\int_{\Omega_{1}}\frac{|W^{(3)}(s)|}{|z-s|}dA(s)\notag \\
&\lesssim ||f||_{L^{\infty}} \frac{1}{\pi}\int_{\Omega_{1}}\frac{|\overline{\partial}R^{(2)}(s)|}{|z-s|}dA(s),
\end{align}
by using the fact
\begin{align}\label{10.7}
\left|W^{(3)}(s)\right|\leq \left|\left|M^{(2)}_{RHP}\right|\right|_{L^{\infty}}\left|\overline{\partial}R^{(2)}(s)\right|
\left|\left|M^{(2)}_{RHP}\right|\right|_{L^{\infty}}^{-1}\lesssim\left|\overline{\partial}R^{(2)}(s)\right|.
\end{align}
Obviously we only consider the integral equation
\begin{align}\label{10.8}
\frac{1}{\pi}\int_{\Omega_{1}}\frac{|\overline{\partial}R^{(2)}(s)|}{|z-s|}dA(s),
\end{align}
from which together  with \eqref{5.14} and
\begin{align*}
Re(2it\theta)&=8it[4\beta(3iu^{2}v-iv^{3})+2i(12\beta z_{1}+2\alpha)uv]\\
&\lesssim 8t[4\beta(-3u^{2}v+u^{2}v)-2(12\beta z_{1}+2\alpha)uv]\\
&\lesssim 8t[-8\beta u^{2}v-24\beta z_{1}uv]\\
&\lesssim -8tz_{1}uv,
\end{align*}
we have
\begin{align}\label{10.9}
\int_{\Omega_{1}}\frac{|\overline{\partial}R^{(2)}(s)|}{|z-s|}dA(s)\lesssim I_{1}+I_{2}+I_{3},
\end{align}
where
\begin{align}
&I_{1}=\int_{0}^{\infty}\int_{v}^{\infty}\frac{1}{|s-z|}|\overline{\partial}\Xi_{1}
|e^{-tz_{1}uv}dudv,\label{10.10}\\
&I_{2}=\int_{0}^{\infty}\int_{v}^{\infty}\frac{1}{|s-z|}|p'_{1}(Rez)|
e^{-tz_{1}uv}dudv,\label{10.11}\\
&I_{3}=\int_{0}^{\infty}\int_{v}^{\infty}\frac{1}{|s-z|}|s-z_{1}|^{-1/2}
e^{-tz_{1}uv}dudv.\label{10.12}
\end{align}
Through direct calculation we can get the following estimate
\begin{align}\label{10.13}
\left|\left|\frac{1}{|s-z|}\right|\right|_{L^{\infty}}=\left(\int_{v}^{\infty}\frac{1}{(u-x)^{2}
+(v-y)^{2}}du\right)^{1/2}\leq \frac{\pi}{|v-y|}.
\end{align}
It follows from the \textbf{Appendix B} that
\begin{align}\label{10.14}
|I_{1}|,~|I_{2}|,~|I_{3}|\lesssim t^{-1/4}.
\end{align}
It follows that
\begin{align}\label{10.15}
\int_{\Omega_{1}}\frac{|\overline{\partial}R^{(2)}(s)|}{|z-s|}dA(s)\lesssim t^{-1/4},
\end{align}
which implies that the inequality \eqref{10.5} holds.
\end{proof}

In what follows, we consider the asymptotic behavior of $M^{(3)}(z)$
\begin{align}\label{10.16}
M^{(3)}(z)=\mathbb{I}+\frac{M_{1}^{(3)}(x,t)}{z}+\mathcal {O}(z^{-2}),
\end{align}
where
\begin{align}\label{10.17}
M_{1}^{(3)}(x,t)=\frac{1}{\pi}\int_{C}M^{(3)}(s)W^{(3)}(s)dA(s).
\end{align} In addition, $M_{1}^{(3)}(x,t)$
 satisfies the estimate
\begin{prop}\label{prop9}
For $t\neq 0$, the matrix $M_{1}^{(3)}(x,t)$ such that
\begin{align}\label{10.18}
\left|M_{1}^{(3)}(x,t)\right|\lesssim t^{-3/4},
\end{align}
\end{prop}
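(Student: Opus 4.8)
The plan is to estimate $M_1^{(3)}(x,t)$ directly from its integral representation \eqref{10.17}, exploiting the fact that this pure $\bar\partial$-integral no longer carries the Cauchy kernel $1/|z-s|$ that appeared in the proof of \textbf{Proposition} \ref{prop8}; removing that singular factor is precisely what upgrades the decay rate from $t^{-1/4}$ to $t^{-3/4}$. First I would record that, since \eqref{10.5} gives $\|C_z\|_{L^{\infty}\to L^{\infty}}\lesssim t^{-1/4}<1$ for large $t$, the operator $(\mathbb{I}-C_z)^{-1}$ is bounded and hence $M^{(3)}=\mathbb{I}\cdot(\mathbb{I}-C_z)^{-1}$ satisfies $\|M^{(3)}\|_{L^{\infty}}\lesssim 1$. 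Combining this with the pointwise bound \eqref{10.7}, namely $|W^{(3)}(s)|\lesssim|\bar\partial R^{(2)}(s)|$, the representation \eqref{10.17} yields
\begin{align*}
\left|M_1^{(3)}(x,t)\right|\lesssim \int_{\mathbb{C}}\left|\bar\partial R^{(2)}(s)\right|\,dA(s).
\end{align*}

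Next I would split $\mathbb{C}$ into the sectors $\Omega_j$ and treat a representative one, say $\Omega_1$ near $z_1$ (the remaining sectors, including those near $z_0$, are handled identically by symmetry and by the analogous bound \eqref{5.16}). Writing $s=u+iv$ and reusing the sign computation from the proof of \textbf{Proposition} \ref{prop8}, namely $\mathrm{Re}(2it\theta)\lesssim -\,t z_1 uv$ on $\Omega_1$, together with the $\bar\partial$-estimate \eqref{5.14}, the integral breaks into three pieces governed respectively by $|\bar\partial\Xi_1|$, $|p_1'(\mathrm{Re}\,s)|$ and $|s-z_1|^{-1/2}$, each multiplied by the Gaussian-type factor $e^{-ctuv}$. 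The $\Xi_1$-term is supported away from $z_1$ and therefore contributes only an exponentially small $\mathcal{O}(e^{-ct})$.

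For the remaining two pieces the decisive estimate is a one-dimensional Cauchy--Schwarz and scaling argument. For the $p_1'$-term, using $p_1'=\gamma'\in L^{2}(\mathbb{R})$ (which holds because $r\in H^{1}$) I would bound the inner $u$-integral by $\|\gamma'\|_{L^{2}}\big(\int_v^{\infty}e^{-2ctuv}\,du\big)^{1/2}\lesssim (tv)^{-1/2}e^{-ctv^2}$ and then integrate in $v$ after the substitution $w=\sqrt{t}\,v$, which produces exactly $t^{-3/4}$. The singular term $|s-z_1|^{-1/2}$ is handled by the same change of variables, the integrable power $-1/2$ being compensated by the Gaussian decay to again give $t^{-3/4}$; this is the content of \textbf{Appendix B}, which already supplies these bounds in the form needed here. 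Summing the sectoral contributions then yields \eqref{10.18}.

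I expect the main obstacle to be the singular term $|s-z_1|^{-1/2}$: one must simultaneously control its behaviour near the stationary-phase point and extract the sharp Gaussian gain in $t$, while keeping all constants uniform for $(x,t)$ ranging over the cone $C(x_1,x_2,v_1,v_2)$ (equivalently, uniform in the locations of $z_0,z_1$ determined by $x/t$). The clean way to do this is the scaling $w=\sqrt{t}\,v$ performed after the inner $u$-integration, which turns the competition between the $-1/2$ singularity and the exponential into a convergent, $t$-independent integral times the prefactor $t^{-3/4}$; verifying that the $z_0$-sectors and the off-diagonal and conjugate contributions obey the identical bound then completes the argument.
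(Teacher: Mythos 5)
Your proposal is correct and follows essentially the same route as the paper: bound $M^{(3)}$ and $M^{(2)}_{RHP}$ in $L^{\infty}$, reduce $M_{1}^{(3)}$ to $\int\left|\overline{\partial}R^{(2)}(s)\right|dA(s)$ over each sector, split via \eqref{5.14} into the $\left|\overline{\partial}\Xi_{1}\right|$, $\left|p_{1}'\right|$ and $\left|s-z_{1}\right|^{-1/2}$ pieces, and gain $t^{-3/4}$ from the Cauchy--Schwarz-in-$u$ plus $w=\sqrt{t}\,v$ scaling, exactly as in the paper's $I_{4},I_{5},I_{6}$ and Appendix B. Your observation that dropping the Cauchy kernel $1/|z-s|$ is what upgrades $t^{-1/4}$ to $t^{-3/4}$, and that the $\Xi_{1}$-term is in fact exponentially small because its support stays a distance $\rho/3$ from the real axis, is accurate and slightly sharper than the paper's statement.
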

\begin{proof}
Similar to the proof of \textbf{Proposition} \ref{prop8}, we only  consider the region $\Omega_{1}=\left\{(u+z_{1},v):v\geq 0,v\leq u<\infty\right\}$, the other case can be proved in the same way.
Note that  $M^{(3)}$ and $M^{(2)}_{RHP}$ are bounded, and employing formulas \eqref{10.17} and  \eqref{10.18}, we obtain
\begin{align}\label{10.19}
\left|\left|M^{(3)}_{1}(x,t)\right|\right|&\leq \frac{1}{\pi}\iint_{\Omega_{1}}\left|M^{(3)}(s)M^{(2)}_{RHP}(s)
\overline{\partial}R^{(2)}(s)M^{(2)}_{RHP}(s)^{-1}\right|dA(s),\\
&\leq I_{4}+I_{5}+I_{6},
\end{align}
where
\begin{align}
&I_{4}=\int_{0}^{\infty}\int_{v}^{\infty}|\overline{\partial}\Xi_{1}
|e^{-tz_{1}uv}dudv,\label{10.20}\\
&I_{5}=\int_{0}^{\infty}\int_{v}^{\infty}|p'_{1}(Rez)|
e^{-tz_{1}uv}dudv,\label{10.21}\\
&I_{6}=\int_{0}^{\infty}\int_{v}^{\infty}|s-z_{1}|^{-1/2}
e^{-tz_{1}uv}dudv.\label{10.22}
\end{align}
It follows from similar to the prove of $I_{i}$ ($i=1,2,3$) that
\begin{align}\label{10.23}
|I_{4}|,~|I_{5}|,~|I_{6}|\lesssim t^{-3/4},
\end{align}
which imply the proof of the proposition is completed.
\end{proof}

\section{Soliton resolution for the Hirota equation}
In this section we will construct the long-time asymptotic solution of the Hirota equation. According to the inverse transformation of \eqref{4.16}, \eqref{5.4}, \eqref{6.4} and \eqref{6.7}, one has
\begin{align}\label{11.1}
M(z)=M^{(3)}(z)E(z)M^{(out)}(z)R^{(2)}(z)^{-1}T^{\sigma_{3}}(z),~~ z\in\mathbb{C}\setminus(\mathcal{U}_{z_{0}}\cup\mathcal{U}_{z_{1}}),
\end{align}
note that $T^{\sigma_{3}}(z)$ is a diagonal matrix.

To construct the solution $q(x,t)$, we take $z\rightarrow\infty$ along the imaginary axis. The advantage of this is that $R^{(2)}(z)$ is the identity matrix. By using these relationships \eqref{4.13}, \eqref{7.36}, \eqref{9.3} and \eqref{9.13}, we have
\begin{align}\label{11.2}
M=\left(\mathbb{I}+\frac{M^{(3)}_{1}}{z}+\cdots\right)\left(\mathbb{I}+\frac{E_{1}}{z}+\cdots\right)
\left(\mathbb{I}+\frac{M_{1}^{(out)}}{z}+\cdots\right)\left(\mathbb{I}
+\frac{T_{1}^{\sigma_{3}}}{z}+\cdots\right),
\end{align}
from which,   comparing the coefficients of $z^{-1}$ to get
\begin{align}\label{11.3}
M_{1}=M_{1}^{(out)}+E_{1}+M_{1}^{(3)}+T_{1}^{\sigma_{3}}.
\end{align}

\section*{Acknowledgements}

This work was supported by  the National Natural Science Foundation of China under Grant No. 11975306, the Natural Science Foundation of Jiangsu Province under Grant No. BK20181351, the Six Talent Peaks Project in Jiangsu Province under Grant No. JY-059,  and the Fundamental Research Fund for the Central Universities under the Grant Nos. 2019ZDPY07 and 2019QNA35.

\section{Appendix A: The parabolic cylinder model problem}
Here we describe in detail the construction of parabolic cylindrical function solution \cite{PC-model,PC-model-2}, which is frequently used in the literature of long-time asymptotic solution. Define the contour $\Sigma^{(pc)}=\cup_{j=1}^{4}\Sigma_{j}^{(pc)}$ shown in Fig. 10 where
\begin{align}
\Sigma_{j}^{(pc)}=\left\{\lambda\in\mathbb{C}|\arg\lambda=\frac{2j-1}{4}\pi \right\}.\tag{A.1}
\end{align}
\centerline{\begin{tikzpicture}[scale=0.6]
\draw[-][dashed](-6,0)--(-5,0);
\draw[-][dashed](-5,0)--(-4,0);
\draw[-][dashed](-4,0)--(-3,0);
\draw[-][dashed](-3,0)--(-2,0);
\draw[-][dashed](-2,0)--(-1,0);
\draw[-][dashed](-1,0)--(0,0);
\draw[-][dashed](0,0)--(1,0);
\draw[-][dashed](1,0)--(2,0);
\draw[-][dashed](2,0)--(3,0);
\draw[-][dashed](3,0)--(4,0);
\draw[-][dashed](4,0)--(5,0);
\draw[-][dashed](5,0)--(6,0);
\draw[-][thick](-4,-4)--(4,4);
\draw[-][thick](-4,4)--(4,-4);
\draw[->][thick](2,2)--(3,3);
\draw[->][thick](-4,4)--(-3,3);
\draw[->][thick](-4,-4)--(-3,-3);
\draw[->][thick](2,-2)--(3,-3);
\draw[fill] (3.2,3)node[below]{$\Sigma_{1}^{(pc)}$};
\draw[fill] (3.2,-3)node[above]{$\Sigma_{4}^{(pc)}$};
\draw[fill] (-3.2,3)node[below]{$\Sigma_{2}^{(pc)}$};
\draw[fill] (-2,-3)node[below]{$\Sigma_{3}^{(pc)}$};
\draw[fill] (0,0)node[below]{$0$};
\draw[fill] (1,0)node[below]{$\Omega_{6}$};
\draw[fill] (1,0)node[above]{$\Omega_{1}$};
\draw[fill] (0,-1)node[below]{$\Omega_{5}$};
\draw[fill] (0,1)node[above]{$\Omega_{2}$};
\draw[fill] (-1,0)node[below]{$\Omega_{4}$};
\draw[fill] (-1,0)node[above]{$\Omega_{3}$};
\draw[fill] (7,3)node[below]{$\lambda^{i\nu\hat{\sigma}_{3}}e^{-\frac{i\lambda^{2}}{4}\hat{\sigma}_{3}}
\left(
  \begin{array}{cc}
    1 & 0 \\
    r_{0} & 1 \\
  \end{array}
\right)
$};
\draw[fill] (7,-2)node[below]{$\lambda^{i\nu\hat{\sigma}_{3}}e^{-\frac{i\lambda^{2}}{4}\hat{\sigma}_{3}}
\left(
  \begin{array}{cc}
    1 & r^{*}_{0} \\
    0 & 1 \\
  \end{array}
\right)
$};
\draw[fill] (-7,2.5)node[below]{$\lambda^{i\nu\hat{\sigma}_{3}}e^{-\frac{i\lambda^{2}}{4}\hat{\sigma}_{3}}
\left(
  \begin{array}{cc}
    1 & \frac{r^{*}_{0}}{1+|r_{0}|^{2}} \\
    0 & 1 \\
  \end{array}
\right)
$};
\draw[fill] (-7,-1)node[below]{$\lambda^{i\nu\hat{\sigma}_{3}}e^{-\frac{i\lambda^{2}}{4}\hat{\sigma}_{3}}
\left(
  \begin{array}{cc}
    1 & 0 \\
    \frac{r_{0}}{1+|r_{0}|^{2}} & 1 \\
  \end{array}
\right)
$};
\end{tikzpicture}}
\centerline{\noindent {\small \textbf{Figure 10.} The contour of $\Sigma_{j}^{(pc)}$ which is the jump condition of the following RH problem.}}

For $r_{0}\in \mathbb{C}$, let $\nu(r)=-\frac{1}{2\pi}\log(1+|\gamma_{0}|^{2})$, we then consider the following parabolic cylinder model Riemann-Hilbert problem.
\begin{RHP}\label{PC-model}
Find a matrix-valued function $M^{(pc)}(\lambda)$ satisfying
\begin{align}
&\bullet \quad M^{(pc)}(\lambda)~ \text{is analytic in}~ \mathbb{C}\setminus\Sigma^{(pc)}, \tag{A.2}\\
&\bullet \quad M_{+}^{(pc)}(\lambda)=M_{-}^{(pc)}(\lambda)V^{(pc)}(\lambda),\quad
\lambda\in\Sigma^{pc}, \tag{A.3}\\
&\bullet \quad M^{(pc)}(\lambda)=\mathbb{I}+\frac{M_{1}}{\lambda}+O(\lambda^{2}),\quad
\lambda\rightarrow\infty. \tag{A.4}
\end{align}
where
\begin{align}\label{Vpc}
V^{(pc)}(\lambda)=\left\{\begin{aligned}
\lambda^{i\nu\hat{\sigma}_{3}}e^{-\frac{i\lambda^{2}}{4}
\hat{\sigma}_{3}}\left(
                    \begin{array}{cc}
                      1 & 0 \\
                      r_{0} & 1 \\
                    \end{array}
                  \right),\quad \lambda\in\Sigma_{1}^{(pc)},\\
\lambda^{i\nu\hat{\sigma}_{3}}e^{-\frac{i\lambda^{2}}{4}
\hat{\sigma}_{3}}\left(
                    \begin{array}{cc}
                      1 & \frac{r^{*}_{0}}{1+|r_{0}|^{2}} \\
                      0 & 1 \\
                    \end{array}
                  \right),\quad \lambda\in\Sigma_{2}^{(pc)},\\
\lambda^{i\nu\hat{\sigma}_{3}}e^{-\frac{i\lambda^{2}}{4}
\hat{\sigma}_{3}}\left(
                    \begin{array}{cc}
                      1 & 0\\
                      \frac{r_{0}}{1+|r_{0}|^{2}} & 1 \\
                    \end{array}
                  \right),\quad \lambda\in\Sigma_{3}^{(pc)},\\
\lambda^{i\nu\hat{\sigma}_{3}}e^{-\frac{i\lambda^{2}}{4}
\hat{\sigma}_{3}}\left(
                    \begin{array}{cc}
                      1 & r^{*}_{0} \\
                      0 & 1 \\
                    \end{array}
                  \right),\quad \lambda\in\Sigma_{4}^{(pc)},
\end{aligned}\right.\tag{A.5}
\end{align}
\end{RHP}

As shown in \cite{PC-equation}, the parabolic cylinder equation reads
\begin{align*}
\left(\frac{\partial^{2}}{\partial z^{2}}+(\frac{1}{2}-\frac{z^{2}}{2}+a)\right)D_{a}=0.
\end{align*}
The explicit solution $M^{(pc)}(\lambda, r_{0})$ of the \textbf{RH Problem} \ref{PC-model} can be expressed by in the literatures \cite{DZ-AM, PC-solution2}
\begin{align*}
M^{(pc)}(\lambda, r_{0})=\Phi(\lambda, r_{0})\mathcal{P}(\lambda, r_{0})e^{\frac{i}{4}\lambda^{2}\sigma_{3}}\lambda^{-i\nu\sigma_{3}},
\end{align*}
where
\begin{align*}
\mathcal{P}(\lambda, r_{0})=\left\{\begin{aligned}
&\left(
                    \begin{array}{cc}
                      1 & 0 \\
                      -r_{0} & 1 \\
                    \end{array}
                  \right),\quad &\lambda\in\Omega_{1},\\
&\left(
                    \begin{array}{cc}
                      1 & -\frac{r^{*}_{0}}{1+|r_{0}|^{2}} \\
                      0 & 1 \\
                    \end{array}
                  \right),\quad &\lambda\in\Omega_{3},\\
&\left(
                    \begin{array}{cc}
                      1 & 0\\
                      \frac{r_{0}}{1+|r_{0}|^{2}} & 1 \\
                    \end{array}
                  \right),\quad &\lambda\in\Omega_{4},\\
&\left(
                    \begin{array}{cc}
                      1 & r^{*}_{0} \\
                      0 & 1 \\
                    \end{array}
                  \right),\quad &\lambda\in\Omega_{6},\\
&~~~\mathbb{I},\quad &\lambda\in\Omega_{2}\cup\Omega_{5},
\end{aligned}\right.
\end{align*}
and
\begin{align*}
\Phi(\lambda, r_{0})=\left\{\begin{aligned}
\left(
                    \begin{array}{cc}
                      e^{-\frac{3\pi\nu}{4}}D_{i\nu}\left( e^{-\frac{3i\pi}{4}}\lambda\right) & -i\beta_{12}e^{-\frac{\pi}{4}(\nu-i)}D_{-i\nu-1}\left( e^{-\frac{i\pi}{4}}\lambda\right) \\
                      i\beta_{21}e^{-\frac{3\pi(\nu+i)}{4}}D_{i\nu-1}\left( e^{-\frac{3i\pi}{4}}\lambda\right) & e^{\frac{\pi\nu}{4}}D_{-i\nu}\left( e^{-\frac{i\pi}{4}}\lambda\right) \\
                    \end{array}
                  \right),\quad \lambda\in\mathbb{C}^{+},\\
\left(
                    \begin{array}{cc}
                      e^{\frac{\pi\nu}{4}}D_{i\nu}\left( e^{\frac{i\pi}{4}}\lambda\right) & -i\beta_{12}e^{-\frac{3\pi(\nu-i)}{4}}D_{-i\nu-1}\left( e^{\frac{3i\pi}{4}}\lambda\right) \\
                      i\beta_{21}e^{\frac{\pi}{4}(\nu+i)}D_{i\nu-1}\left( e^{\frac{i\pi}{4}}\lambda\right) & e^{-\frac{3\pi\nu}{4}}D_{-i\nu}\left( e^{\frac{3i\pi}{4}}\lambda\right) \\
                    \end{array}
                  \right),\quad \lambda\in\mathbb{C}^{-},
\end{aligned}\right.
\end{align*}
with
\begin{align*}
\beta_{12}=\frac{\sqrt{2\pi}e^{i\pi/4}e^{-\pi\nu/2}}{r_0\Gamma(-i\nu)},\quad \beta_{21}=\frac{-\sqrt{2\pi}e^{-i\pi/4}e^{-\pi\nu/2}}{r_0^*\Gamma(i\nu)}=\frac{\nu}{\beta_{12}}.
\end{align*}
It follows from the well known asymptotic behavior of $D_{a}(z)$  that the asymptotic behavior of the solution can be determined by
\begin{align}\label{A-1}
M^{(pc)}(r_0,\lambda)=\mathbb{I}+\frac{M_1^{(pc)}}{i\lambda}+O(\lambda^{-2}), \tag{A.6}
\end{align}
where
\begin{align*}
M_1^{(pc)}=\begin{pmatrix}0&\beta_{12}\\-\beta_{21}&0\end{pmatrix}.
\end{align*}

\section{Appendix B: Detailed calculations for the pure $\bar{\partial}$-Problem}
For $z\in\Omega_{1}$, let $s=u+iv$, $z=x+iy$, then we have
\begin{align}
|I_{1}|&=\int_{0}^{\infty}\int_{v}^{\infty}\frac{1}{|s-z|}|\overline{\partial}\Xi_{1}
|e^{-tz_{1}uv}dudv\notag\\
&\lesssim \int_{0}^{\infty}e^{-tz_{1}uv}\left|\left|\overline{\partial}\Xi_{1}\right|\right|_{L^{2}}
\left|\left|\frac{1}{|s-z|}\right|\right|_{L^{2}}dv\notag\\
&\lesssim\int_{0}^{\infty}\frac{1}{\sqrt{|v-\eta|}}e^{-tz_{1}v^{2}}dv\notag\\
&=\int_{0}^{\eta}\frac{1}{\sqrt{|v-\eta|}}e^{-tz_{1}v^{2}}dv+
\int_{\eta}^{\infty}\frac{1}{\sqrt{|v-\eta|}}e^{-tz_{1}v^{2}}dv\notag\\
&\triangleq F_{1}+F_{2}\tag{B.1}
\end{align}
Note that taking $w=\frac{v}{\eta}$ yields
\begin{align}\label{B.2}
F_{1}&=\int_{0}^{\eta}\frac{1}{\sqrt{\eta}{\sqrt{1-\frac{v}{\eta}}}}e^{-tz_{1}v^{2}}dv
=\int_{0}^{\eta}\frac{\sqrt{\eta}e^{-tz_{1}w^{2}\eta^{2}}}{\sqrt{1-w}}dw\notag\\
&\lesssim t^{-1/4}\int_{0}^{\eta}\frac{1}{\sqrt{w(1-w)}}dw\lesssim t^{-1/4}.\tag{B.2}
\end{align}
Similarly taking $v=w+\eta$ yields
\begin{align}\label{B.3}
F_{2}&=\int_{\eta}^{\infty}\frac{1}{\sqrt{v-\eta}}e^{-tz_{1}v^{2}}dv
=\int_{\eta}^{\infty}\frac{1}{\sqrt{w}}e^{-tz_{1}(w+\eta)^{2}}dv\notag\\
&\lesssim\int_{\eta}^{\infty}\frac{1}{\sqrt{w}}e^{-tz_{1} w^{2}}dv
\lesssim t^{-1/4}.\tag{B.3}
\end{align}
For $I_{2}$, we can obtain the estimate in the same way
\begin{align}\label{B.4}
I_{2}\lesssim t^{-1/4}.\tag{B.4}
\end{align}
It follows from the H\"{o}lder inequality (taking $q>2$) that
\begin{align}
\left|\left|\frac{1}{s-z}\right|\right|_{L^{q}}\lesssim |v-\eta|^{\frac{1}{q}-1}.\tag{B.5}
\end{align}
Then
\begin{align}\label{B.6}
I_{3}&\lesssim \int_{0}^{\infty}e^{-tz_{1}v^{2}}\left|\left|\frac{1}{\sqrt{s-z_{1}}}\right|\right|_{L^{p}}
\left|\left|\frac{1}{s-z}\right|\right|_{L^{q}}dv \notag\\
&\lesssim \int_{0}^{\infty}e^{-tz_{1}v^{2}}v^{\frac{1}{p}-\frac{1}{2}}
\left|v-\eta\right|^{\frac{1}{q}-1}dv\notag\\
&\lesssim t^{-1/4}.\tag{B.6}
\end{align}
Subsequently combining with \eqref{B.2}, \eqref{B.3}, \eqref{B.4} and \eqref{B.6}, we can obtain \eqref{10.14}.

\renewcommand{\baselinestretch}{1.2}

\end{document}